\documentclass[12pt,reqno]{amsart}

\usepackage[vmargin=1in,hmargin=1.2in]{geometry}
\usepackage{graphicx, latexsym, graphics}
\usepackage{amsfonts, amssymb, amsmath, amsthm, mathtools, mathrsfs}
\usepackage{algorithmic, algorithm}
\usepackage{tikz}
\usetikzlibrary{matrix,arrows}

\newcommand{\NN}{\mathbb N}

\newcommand{\CC}{\mathbb C}
\newcommand{\RR}{\mathbb R}
\newcommand{\ZZ}{\mathbb Z}

\newcommand{\DD}{\mathcal D}
\newcommand{\SSS}{\mathcal S}

\theoremstyle{plain}
\newtheorem{theorem}{Theorem}[section]

\newtheorem{lemma}[theorem]{Lemma}
\newtheorem{corollary}[theorem]{Corollary}

\theoremstyle{remark}
\newtheorem{remark}[theorem]{Remark}

\theoremstyle{definition}

\allowdisplaybreaks

\numberwithin{equation}{section}

\newcommand{\beq}{\begin{eqnarray}}
\newcommand{\eeq}{\end{eqnarray}}

\newcommand{\beqs}{\begin{eqnarray*}}
\newcommand{\eeqs}{\end{eqnarray*}}


\newcommand{\supp}{\operatorname{supp}}

\def\Modsq#1{{\mathcal{M}[{#1}]}}

\begin{document}

\title[Modulation spaces associated to tensor products of amalgam spaces]{Modulation spaces associated to tensor products of amalgam spaces}

\author[H.~G.~Feichtinger]{Hans G. Feichtinger}
\address{H. G. Feichtinger, Faculty of Mathematics, University of Vienna, Oskar-Morgenstern-Platz 1, A-1090 Wien, Austria}
\email{hans.feichtinger@univie.ac.at}

\author[S. Pilipovi\'{c}]{Stevan Pilipovi\'{c}}
\address{S. Pilipovi\'{c}, Department of Mathematics and Informatics, University of Novi Sad, Trg Dositeja Obradovi\'ca 4, 21000 Novi Sad, Serbia}
\email{stevan.pilipovic@dmi.uns.ac.rs}

\author[B. Prangoski]{Bojan Prangoski}
\address{B. Prangoski, Faculty of Mechanical Engineering, University ``Ss. Cyril and Methodius'',
Karpos II bb, 1000 Skopje, Macedonia}
\email{bprangoski@yahoo.com}

\subjclass[2010]{Primary 46F05. Secondary 46H25; 46E10; 46F12; 81S30}
\keywords{Wiener amalgam spaces; Modulation spaces; Translation and modulation invariant Banach space}

\maketitle

\begin{abstract}
We identify the modulation spaces associated to tensor products of amalgam spaces having a large class of Banach spaces as their local component. As consequences of the main results, we describe the modulation spaces associated to tensor products of various $L^p$ spaces.
\end{abstract}
\maketitle

\section{Introduction}
\nocite{fegr89,fegr92-1,fe03-1,fe06,fegr85}  
The modulation spaces were introduced by the first author
in a technical report prepared 1983 and published in 2002 (\cite{fe03-1}).
Subsequently the theory was further developed in the context
of {\it coorbit spaces}
by him and Gr\"ochenig in \cite{fegr89} and \cite{fegr92-1}. We
refer to the monograph \cite{Grochenig} for an overview. Nowadays, they are studied by many authors and are widely accepted as an indispensable tool in time-frequency analysis.\\
\indent A natural generalisation of the classical modulation spaces $M^{p,q}_{\eta}$ was introduced in \cite{dppv-3} in the following way. Let $X$ be a Banach space of tempered distributions laying in between $\SSS(\RR^{2d})$ and $\SSS'(\RR^{2d})$, invariant under translations and modulations and satisfying certain technical conditions (see Section \ref{tmib-mod-spa}); so called translation-modulation invariant Banach spaces. The modulation spaces $\Modsq{X}$ (denoted as $\mathcal{M}^X$ in \cite{dppv-3}) associated to $X$ consists of all tempered distributions in $\SSS'(\RR^d)$ whose image under the short-time Fourier transform belongs to $X$. Taking $X$ to be the weighted mixed-norm Lebesgue space $L^{p,q}_{\eta}$, $\Modsq{X}$ reduces to $M^{p,q}_{\eta}$. These generalised modulation spaces satisfy all of the important basic properties as the classical ones (see \cite{dppv-3}). Unlike $L^{p,q}_{\eta}$, the space $X$ does not need to be solid (in the sense of \cite{fei84}) and this generalised framework allows one to consider a wide variety of modulation spaces $\Modsq{X}$. An interesting choice for $X$ beyond $L^{p,q}_{\eta}$ is to take a completed $\pi$-tensor product (also called the projective tensor product \cite{ryan}) of two $L^p$ spaces on $\RR^d$, i.e. $X=L^{p_1}(\RR^d)\hat{\otimes}_{\pi} L^{p_2}(\RR^d)$ (these are rarely solid; see \cite[Remark 3.10]{dppv-3}). In fact, \cite[Proposition 5.1 (ii)]{dppv-3} asserts that $\Modsq{X}\subseteq L^1(\RR^d)$ when $X=L^{p_1}\hat{\otimes}_{\pi} L^{p_2}$ with $1\leq p_1\leq p_2\leq 2$. The main goal of this article is to identify $\Modsq{X}$ when $X$ is a tensor product. As a consequence of one of our main results we can explicitly describe $\Modsq{X}$ when $X=L^{p_1}\hat{\otimes}_{\pi} L^{p_2}$ with $1\leq p_1\leq p_2\leq 2$ (see Corollary \ref{cor12} and Remark \ref{rem-for-lpp}):
$$
\Modsq{L^{p_1}\hat{\otimes}_{\pi} L^{p_2}}=W(\mathcal F L^{p_2},L^1)=\mathcal{F}M^{p_2,1},\quad \mbox{for all}\,\, 1\leq p_1\leq p_2\leq 2.
$$
\indent The amalgam spaces \cite{feich,fe81} (often called Wiener amalgam spaces) will play a key role in our analysis. As it turns out, it is easier to identify $\Modsq{X}$ when $X$ is a tensor product of two amalgam type spaces and then deduce from it various results concerning tensor products of $L^p$ spaces. In the first main result of the article (Theorem \ref{1.21}) we prove that $\Modsq{X}$ is an amalgam space of the form $W(E,L^1_{\eta})$ when $X$ is the completed $\pi$-tensor product of a large class of amalgam spaces; here $E\subseteq \SSS'(\RR^d)$ is a Banach space of tempered distributions (which satisfies the same general properties as $X$). This is interesting in itself as it partially answers the
converse  
question: when $W(E,L^p_{\eta})$ can be regarded as a (generalised) modulation space? Theorem \ref{1.21} claims this is always the case when $p=1$ for any so called translation-modulation invariant Banach space $E$. When $E=\mathcal{F}L^r$, the amalgam space $W(\mathcal{F}L^r, L^p)$ is exactly the Fourier image of the classical modulation space $M^{r,p}$ (cf. Lemma \ref{ident-mod-amalg}). 
For $p=1$ these amalgam spaces can be considered as a modulation spaces as well.\\
\indent We also consider $\Modsq{X}$ when $X$ is the completed $\epsilon$-tensor product (also called the injective tensor product \cite{ryan}) of a class of two amalgam spaces. Our second main result (Theorem \ref{meinproposition1}) shows that in this case $\Modsq{X}$ is an amalgam space of the form $W(E,L^{\infty}_{\eta,0})$, where $L^{\infty}_{\eta,0}$ denotes that the elements of the amalgam space vanish at infinity with weight $\eta$.\\
\indent The article is organised as follows. In Section \ref{notation} we recall the definitions and basic properties of the translation-modulation invariant Banach spaces and the generalised modulation spaces. Section \ref{amalgam-spaces-s} is devoted to the amalgam spaces and their properties, especially when they are constructed from translation-modulation invariant Banach spaces. Sections \ref{spi} and \ref{sepsilon} contain the main results: Theorem \ref{1.21} and Theorem \ref{meinproposition1}. In the last section we collect a number of important consequences (and special cases) of the two main results.

\section{Notation and Preliminaries}\label{notation}

A positive measurable function $\eta:\RR^d\rightarrow (0,\infty)$ is said to be a {\it polynomially bounded weight} on $\RR^d$ if there are $C,\tau>0$ such that
\beqs
\eta(x+y)\leq C\eta(x)(1+|y|)^{\tau},\quad \mbox{for all}\,\, x,y\in\RR^d.
\eeqs
Such functions $\eta$ are also called \textit{moderate} with respect to the Beurling weight $(1+|y|)^{\tau}$, $\tau\geq 0$. Given a polynomially bounded weight $\eta$, we denote by $L^p_{\eta}(\RR^d)$, $1\leq p\leq \infty$,
the weighted $L^p$ space of measurable functions $f$ with norm $\|f\|_{L^p_{\eta}}=\|f\eta\|_{L^p}$. When $p=\infty$, we also consider the closed subspace $L^{\infty}_{\eta,0}(\RR^d)$ of $L^{\infty}_{\eta}(\RR^d)$
 consisting of all $f\in L^{\infty}_{\eta}(\RR^d)$ which satisfy the following:
 for every $\varepsilon>0$ there exists a compact set $K\subseteq \RR^d$
 such that $\|f\eta\|_{L^{\infty}(\RR^d\backslash K)}\leq \varepsilon$. Additionally,
 we denote by $\mathcal{C}_{\eta,0}(\RR^d)$ the space
 $L^{\infty}_{\eta,0}(\RR^d)\cap \mathcal{C}(\RR^d)$;
 it is a closed subspace of $L^{\infty}_{\eta,0}(\RR^d)$
  and we equip it with the induced norm $\|\cdot\|_{L^{\infty}_{\eta}}$. When $\eta=1$, we employ the notations $L^{\infty}_0(\RR^d)$ and $\mathcal{C}_0(\RR^d)$ instead of $L^{\infty}_{\eta,0}(\RR^d)$ and $\mathcal{C}_{\eta,0}(\RR^d)$ (i.e., $\mathcal{C}_0(\RR^d)$ is the space of all continuous
  functions which vanish at infinity with supremum norm). Although we will
  never do so, there is no loss of generality in assuming $\eta$ to be continuous when working with either one of these space because one can always find an equivalent continuous polynomially bounded weight $\tilde{\eta}$, i.e.\ satisfying $C'^{-1}\tilde{\eta}(x)\leq \eta(x)\leq C'\tilde{\eta}(x)$, $\forall x\in\RR^d$, for some $C'>0$, defining the same weighted spaces. \\
\indent We use the following version of the Fourier transform:  $\mathcal{F}f(\xi)=\int_{\RR^d}e^{-2\pi i x\xi}f(x)dx$, $f\in L^1(\RR^d)$. The translation and modulation operators $T_x$, $x\in\RR^d$, and $M_{\xi}$, $\xi\in\RR^d$, are defined as follows: $T_xf(t)=f(t-x)$ and $M_{\xi}f(t)=e^{2\pi i t\xi}f(t)$. The short-time Fourier transform (from now, always abbreviated as STFT) of $f\in\SSS'(\RR^d)$ with window $g\in\SSS(\RR^d)\backslash\{0\}$ is defined as
\beqs
V_gf(x,\xi)=\langle f,\overline{M_{\xi}T_x g}\rangle=(f,M_{\xi}T_x g),\quad x,\xi\in\RR^d,
\eeqs
where $(\cdot,\cdot)$ stands for the sesquilinear form induced by the inner product on $L^2(\RR^d)$. When $f$ is a measurable function with polynomial growth, we have
\beqs
V_gf(x,\xi)= \int_{\RR^d} e^{-2\pi i t\xi} f(t)\overline{g(t-x)}dt,\quad x,\xi\in\RR^d.
\eeqs
For any $f\in\SSS'(\RR^d)$ and $g\in\SSS(\RR^d)\backslash\{0\}$, $V_gf$ is a smooth function on $\RR^{2d}$; furthermore $V_g:\SSS'(\RR^d)\rightarrow \SSS'(\RR^{2d})$ is continuous and it restricts to a continuous operator $V_g:\SSS(\RR^d)\rightarrow \SSS(\RR^{2d})$. The adjoint $V_g^*$ of $V_g$ is a continuous operator from $\SSS(\RR^{2d})$ into $\SSS(\RR^d)$ given by
\beqs
V_g^*\Phi(t)=\int_{\RR^{2d}} e^{2\pi i \xi t}\Phi(x,\xi)g(t-x)dxd\xi,\quad \Phi\in\SSS(\RR^{2d}),
\eeqs
and it extends to a continuous operator $V_g^*:\SSS'(\RR^{2d})\rightarrow \SSS'(\RR^d)$. Furthermore, for any $g_1,g_2\in\SSS(\RR^d)\backslash\{0\}$, $V_{g_1}^*V_{g_2}=(g_1,g_2)\operatorname{Id}$ on $\SSS'(\RR^d)$.

\subsection{Translation-modulation invariant Banach spaces of distributions and their duals. Modulation spaces}\label{tmib-mod-spa}

Here and throughout the rest of the article, for any two locally convex spaces $X$ and $Y$, $\mathcal{L}(X,Y)$ stands for the space of continuous linear mappings from $X$ into $Y$ while $\mathcal{L}_b(X,Y)$ for this space equipped with the topology of uniform convergence on all bounded subsets of $X$. When $X$ and $Y$ are Banach spaces, we denote by $\|\cdot\|_{\mathcal{L}_b(X,Y)}$ the operator norm on the Banach space $\mathcal{L}_b(X,Y)$ induced by the norms $\|\cdot\|_X$ and $\|\cdot\|_Y$. If $X=Y$, we will often abbreviate these notations as $\mathcal{L}(X)$, $\mathcal{L}_b(X)$ and $\|\cdot\|_{\mathcal{L}_b(X)}$. Finally, $X\hookrightarrow Y$ will always mean that $X$ is continuously and densely included into $Y$.\\
\indent A Banach space $E$ is said to be a \textit{translation-modulation invariant Banach spaces of distributions} on $\RR^d$ (or, a TMIB space for short) if it satisfies the following conditions \cite{dppv-3} (cf. \cite{DPPV,dpv,pbb}).
\begin{itemize}
\item[(I)] $\mathcal{S}(\mathbb{R}^d)\hookrightarrow E\hookrightarrow \mathcal{S}'(\RR^d)$.
\item[(II)] $T_x(E)\subseteq E$ and $M_{\xi}(E)\subseteq E$, for all $x,\xi\in\RR^d$.
\item[(III)] There exist $\tau,C>0$ such that\footnote{$T_x$ and $M_{\xi}$ are continuous on $E$ because of (I), (II) and the closed graph theorem}
    \begin{equation*}
    \omega_E(x)=\|T_x\|_{\mathcal{L}_b(E)}\leq C (1+|x|)^{\tau}\quad \mbox{and}\quad \nu_E(\xi)=\|M_{-\xi}\|_{\mathcal{L}_b(E)}\leq C (1+|\xi|)^{\tau}.
    \end{equation*}
\end{itemize}
We recall the basic facts of TMIB spaces and refer to \cite[Section 3]{dppv-3} (see also \cite{dpv}) for the complete account. We start by pointing out that $E$ is always separable and $\omega_E$ and $\nu_E$ are measurable submultiplicative locally bounded functions which satisfy $\omega_E(0)=\nu_E(0)=1$. The space $E$ is a Banach module with respect to convolution over the Beurling (convolution) algebra $L^1_{\omega_E}$ and a Banach module with respect to multiplication over the Wiener-Beurling algebra $\mathcal{F}L^1_{\nu_E}$ \cite{beurling,lst} \footnote{sometimes denoted by $A_{\nu_E}$}; the multiplication operation on $\mathcal{F}L^1_{\nu_E}$ is defined via the Fourier transform and the convolution on $L^1_{\nu_E}$ (when $\nu_E$ is bounded from below, $\mathcal{F}L^1_{\nu_E}\subseteq \mathcal{C}(\RR^d)$ and the multiplication reduces to ordinary pointwise multiplication). To be precise, the convolution and multiplication operations on $\SSS(\RR^d)$ uniquely extend to continuous bilinear mappings $*:L^1_{\omega_E}\times E\rightarrow E$ and $\cdot :\mathcal{F}L^1_{\nu_E}\times E\rightarrow E$ such that
\beqs
\|f*e\|_E&\leq& \|f\|_{L^1_{\omega_E}}\|e\|_E,\quad \forall f\in L^1_{\omega_E},\, \forall e\in E,\\
\|g\cdot e\|_E&\leq& \|g\|_{\mathcal{F}L^1_{\nu_E}}\|e\|_E,\quad \forall g\in\mathcal{F}L^1_{\nu_E},\,\forall e\in E,
\eeqs
and $f_1*(f_2*e)=(f_1*f_2)*e$, for all $f_1,f_2\in L^1_{\omega_E}$, $e\in E$, and $g_1\cdot(g_2\cdot e)=(g_1\cdot g_2)\cdot e$, for all $g_1,g_2\in\mathcal{F}L^1_{\nu_E}$, $e\in E$.\\
\indent The Fourier image of $E$ (i.e. the associated Fourier space) $\mathcal{F}E=\{\mathcal{F}f\in\SSS'(\RR^d)|\, f\in E\}$ is again a TMIB space with norm $\|\mathcal{F}f\|_{\mathcal{F}E}=\|f\|_E$, $\mathcal{F}f\in\mathcal{F}E$, and
\beqs
\omega_{\mathcal{F}E}(x)=\check{\nu}_E(x)=\nu_E(-x)\quad \mbox{and}\quad \nu_{\mathcal{F}E}(\xi)=\omega_E(\xi).
\eeqs
Similarly, $\mathcal{F}^{-1}E=\{\mathcal{F}^{-1}f\in\SSS'(\RR^d)|\, f\in E\}$ with norm $\|\mathcal{F}^{-1}f\|_{\mathcal{F}^{-1}E}=\|f\|_E$ is also a TMIB space. If $f\mapsto \check{f}$ restricts to a well-defined isometry on $E$, then $\mathcal{F}E=\mathcal{F}^{-1}E$ and $\|\cdot\|_{\mathcal{F}E}=\|\cdot\|_{\mathcal{F}^{-1}E}$. This holds, for example, when $E=L^p_{\eta}(\RR^d)$, $1\leq p<\infty$, and $E=\mathcal{C}_{\eta,0}(\RR^d)$ in the case $\eta(x)=(1+|x|)^s$, $s\in\RR$.

\begin{remark}\label{equiv-norm-for-bou-gr-f}
If $E$ is a TMIB space such that $\omega_E$ is bounded from above then $e\mapsto \sup_{x\in\RR^d}\|T_x e\|_E$, $E\rightarrow [0,\infty)$, is a norm on $E$ equivalent to its original norm and with respect to this new norm the translation operators $T_x$ are isometries on $E$ for all $x\in\RR^d$.
\end{remark}

A Banach space $E$ is said to be a \textit{dual translation-modulation invariant Banach space of distributions} (a DTMIB space for short) if it is the strong dual of a TMIB space. If $E$ is a DTMIB space then $\SSS(\RR^d)\subseteq E\subseteq \SSS'(\RR^d)$ and the inclusions are continuous. Furthermore, the translation and modulation operators are continuous on $E$ and, setting $E=F'$ with $F$ a TMIB space, we always have $\omega_E=\check{\omega}_F$ and $\nu_E=\nu_F$, where, as before, $\omega_E(x)=\|T_x\|_{\mathcal{L}_b(E)}$ and $\nu_E(\xi)=\|M_{-\xi}\|_{\mathcal{L}_b(E)}$. However, in general, $\SSS(\RR^d)$ may fail to be dense in $E$ (a typical example is $E=L^{\infty}(\RR^d)$); but, when $E$ is the strong dual of a reflexive TMIB space, $E$ is also a TMIB space. Given a DTMIB space $E$, one can introduce convolution with elements of $L^1_{\omega_E}$ and multiplication with elements of $\mathcal{F}L^1_{\nu_E}$ by duality and $E$ becomes a Banach module with respect to convolution over $L^1_{\omega_E}$ and a Banach module with respect to multiplication over $\mathcal{F}L^1_{\nu_E}$. Because of this, both the TMIB and the DTMIB spaces can be considered as Banach spaces of distributions with two module structures in the sense of \cite{brfe83,fei84} (see also \cite{fegu90}).

\begin{remark}\label{rem-f-con-map}
Let $E$ be a TMIB space and denote $\check{E}'=\{f\in\SSS'(\RR^d)|\, \check{f}\in E'\}$ with norm $\|f\|_{\check{E}'}=\|\check{f}\|_{E'}$. The convolution mapping $*:\check{E}'\times \SSS(\RR^d)\rightarrow \SSS'(\RR^d)$ uniquely extends to a continuous bilinear mapping $*:\check{E}'\times E\rightarrow L^{\infty}_{1/\check{\omega}_E}(\RR^d)$, $f*e(x)=\langle \check{f},T_{-x}e\rangle$, and
\beqs
\|f*e\|_{L^{\infty}_{1/\check{\omega}_E}}= \|(f*e)/\check{\omega}_E\|_{L^{\infty}}\leq \|\check{f}\|_{E'}\|e\|_E,\quad f\in\check{E}',\, e\in E;
\eeqs
furthermore, $f*e\in\mathcal{C}(\RR^d)$, for all $f\in\check{E}'$, $e\in E$ (see \cite[Proposition 6]{dpv} and the comments before it).
\end{remark}

If $\eta$ is a polynomially bounded weight, the spaces $L^p_{\eta}$, $1\leq p<\infty$, and $\mathcal{C}_{\eta,0}$, as well as their associated Fourier spaces $\mathcal{F}L^p_{\eta}$, $1\leq p<\infty$, and $\mathcal{F}\mathcal{C}_{\eta,0}$ are examples of TMIB spaces, while $L^{\infty}_{\eta}$ is a DTMIB space; when $p=2$ and $\eta(x)=(1+|x|)^s$, $s\in\RR$, $\mathcal{F}L^2_{\eta}$ is just the classical Sobolev space $\mathcal{H}_s$. The Shubin-Sobolev spaces \cite{Shubin} $\mathcal{Q}_s$, $s\in\RR$, are also TMIB spaces. Another example of TMIB spaces are the mixed-norm Lebesgue spaces $L^{p,q}_{\eta}(\RR^{2d})$ $1\leq p,q<\infty$, with $\eta$ a polynomially bounded weight on $\RR^{2d}$; of course $L^{\infty,\infty}_{\eta}(\RR^{2d})=L^{\infty}_{\eta}(\RR^{2d})$ is a DTMIB space.\\
\indent Let $X$ be a TMIB or a DTMIB space on $\RR^{2d}$ and let $g\in\SSS(\RR^d)\backslash\{0\}$. The modulation spaces $\Modsq{X}$ associated to $X$ is defined as \cite{dppv-3}:
\beqs
\Modsq{X}=\{f\in\SSS'(\RR^d)|\, V_gf\in X\},\quad \mbox{with norm}\quad \|f\|_{\Modsq{X}}=\|V_gf\|_X.
\eeqs
In \cite{dppv-3}, it is denoted by $\mathcal{M}^X$; we employ the notation $\Modsq{X}$ instead for readability purposes as often $X$ will be a complicated space. We recall its basic properties and refer to \cite[Section 4]{dppv-3} for the complete account. Different choices of $g$ result in the same space with equivalent norms. When $X$ is a TMIB space so is $\Modsq{X}$, while if $X$ is a DTMIB space then $\Modsq{X}$ is also a DTMIB space. If $\eta$ is a polynomially bounded weight on $\RR^{2d}$, $\Modsq{L^{p,q}_{\eta}}$, $1\leq p,q\leq \infty$, is just the classical modulation space $M^{p,q}_{\eta}$; in this case, we will employ the latter notation.

\subsection{Tensor products of TMIB spaces. Modulation spaces associated to tensor products of TMIB spaces}

Given any two TMIB spaces $E$ and $F$ on $\RR^d$, \cite[Theorem 3.6]{dppv-3} yields that their completed $\epsilon$-tensor product $E\hat{\otimes}_{\epsilon} F$ is a TMIB space on $\RR^{2d}$. The same result also yields that the completed $\pi$-tensor product $E\hat{\otimes}_{\pi} F$ is also a TMIB space whenever either $E$ or $F$ additionally satisfies the weak approximation property (we refer to \cite[Chapter 8 Section 43]{kothe2} for the latter notion). We claim that every TMIB space satisfies the weak approximation property; in fact the following result shows that it always satisfies a stronger property, namely the weak sequential approximation property (see \cite[Section 1]{komatsu3} for the latter notion). In the following result, for $X$ and $Y$ two locally convex spaces, we denote by $\mathcal{L}_c(X,Y)$ the space $\mathcal{L}(X,Y)$ equipped with the topology of uniform convergence on all compact convex circled subsets of $X$; when $X$ is complete it is the same as the topology of uniform convergence on all precompact (or, equivalently, relatively compact) subsets of $X$.

\begin{lemma}\label{wap}
Let $E$ be a TMIB space. Then there exists $S_{n,m}\in E'\otimes E$, $n,m\in\ZZ_+$, such that
\beq
\lim_{m\rightarrow\infty}(\lim_{n\rightarrow \infty}S_{n,m})= \operatorname{Id}\,\,\, \mbox{in}\,\,\, \mathcal{L}_c(E,E).
\eeq
In particular, $E$ satisfies the weak sequential approximation property.
\end{lemma}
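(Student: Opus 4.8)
The plan is to realise $\operatorname{Id}$ as an iterated limit of finite-rank operators built from the \emph{two} module structures of $E$: convolution (to gain smoothness) and multiplication by a compactly supported cutoff (to gain compact support), followed by a discretisation via Fourier series on a large torus. Fix $\varphi\in\SSS(\RR^d)$ with $\int\varphi=1$ and $\psi\in\mathcal{C}^\infty_c(\RR^d)$ with $\psi\equiv 1$ on the unit ball and $\supp\psi\subseteq B(0,2)$, and set $\varphi_m=m^d\varphi(m\,\cdot\,)$, $\psi_m=\psi(\,\cdot\,/m)$ for $m\in\ZZ_+$. I would define
$$
R_m e=\psi_m\cdot(\varphi_m*e),\qquad e\in E .
$$
Because $\omega_E(x)\le C(1+|x|)^\tau$ and $\nu_E(\xi)\le C(1+|\xi|)^\tau$, the rescalings $y=mx$, $\zeta=m\xi$ give $\|\varphi_m\|_{L^1_{\omega_E}}\le C\!\int|\varphi(y)|(1+|y|)^\tau dy$ and $\|\psi_m\|_{\mathcal{F}L^1_{\nu_E}}=\|\widehat{\psi_m}\|_{L^1_{\nu_E}}\le C\!\int|\widehat\psi(\zeta)|(1+|\zeta|)^\tau d\zeta$, both finite and \emph{uniform in $m$}; hence the module inequalities yield $\|R_m\|_{\mathcal{L}_b(E)}\le M$ for all $m$. (Since $\psi_m\in\SSS(\RR^d)$, its module action coincides with ordinary pointwise multiplication, both operators being continuous on $\SSS'(\RR^d)$ and agreeing on the dense subspace $\SSS(\RR^d)$.)

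For the outer limit I would show $R_m\to\operatorname{Id}$ in $\mathcal{L}_c(E,E)$. For $s\in\SSS(\RR^d)$ one has $\varphi_m*s\to s$ and $\psi_m\cdot s\to s$ in $\SSS(\RR^d)$, hence in $E$, so that $\|R_m s-s\|_E\le\|\psi_m\|_{\mathcal{F}L^1_{\nu_E}}\|\varphi_m*s-s\|_E+\|\psi_m\cdot s-s\|_E\to 0$; combined with the uniform bound $M$ and density of $\SSS(\RR^d)$ in $E$, this upgrades to $R_m e\to e$ for every $e\in E$. A uniformly bounded sequence converging pointwise converges uniformly on compact (equivalently precompact) sets: covering a compact $C\subseteq E$ by finitely many balls $B(e_i,\delta)$ and estimating $\|R_m e-e\|_E\le(M+1)\delta+\max_i\|R_m e_i-e_i\|_E$ gives $\sup_{e\in C}\|R_m e-e\|_E\to 0$, i.e. convergence in $\mathcal{L}_c(E,E)$.

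For the inner limit, I would fix $m$ and approximate $R_m$ by finite-rank operators. Since $\varphi_m\in\SSS(\RR^d)$, Remark \ref{rem-f-con-map} applied to $\partial^\alpha\varphi_m$ shows $\varphi_m*e\in\mathcal{C}^\infty(\RR^d)$ with $\sup_{|t|\le 2m}|\partial^\alpha(\varphi_m*e)(t)|\le C_{m,\alpha}\|e\|_E$; multiplying by $\psi_m$ then shows that $R_m e$ is smooth, supported in $B(0,2m)$, and bounded in every $\mathcal{C}^k(B(0,2m))$-norm by $C_{m,k}\|e\|_E$. Regarding these functions as living on the torus $(\RR/4m\ZZ)^d$, I let $S_{n,m}e$ be the product of a fixed cutoff $\theta\in\mathcal{C}^\infty_c(\RR^d)$ (with $\theta\equiv1$ on $B(0,2m)$) and the degree-$n$ partial Fourier sum of $R_m e$. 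Each coefficient $e\mapsto c_k(R_m e)$ equals the pairing of $R_m e\in E$ with a fixed Schwartz function, hence lies in $E'$, and each atom $\theta\,e^{2\pi i k\cdot/4m}\in\SSS(\RR^d)\subseteq E$, so $S_{n,m}\in E'\otimes E$. The uniform $\mathcal{C}^k$-bounds (for all $k$) force the Fourier tails to zero in every $\mathcal{C}^{k'}(B(0,3m))$, hence in $E$ via the embedding $\SSS(\RR^d)\hookrightarrow E$ restricted to functions supported in a fixed ball, uniformly over $\|e\|_E\le 1$; thus $S_{n,m}\to R_m$ in $\mathcal{L}_b(E)$, a fortiori in $\mathcal{L}_c(E,E)$. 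Combining the two limits gives $\lim_{m}\lim_{n}S_{n,m}=\operatorname{Id}$ in $\mathcal{L}_c(E,E)$, and the weak sequential approximation property is then immediate from the definition.

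The step I expect to be the main obstacle is the inner limit: one must confirm that $R_m$ genuinely both smooths and truncates, i.e. that the $\mathcal{C}^k$-bounds are uniform in $\|e\|_E$ (Remark \ref{rem-f-con-map} is exactly the tool for this, applied to derivatives of $\varphi_m$), and that the Fourier truncation converges in the $E$-norm rather than merely uniformly, which is where $\SSS(\RR^d)\hookrightarrow E$ for functions supported in a fixed ball enters. By contrast, the uniform operator bound — usually the delicate point in such arguments — is immediate here, coming for free from the two module inequalities, and it is precisely this that makes the convolution-and-multiplication route preferable to a Gabor-frame truncation.
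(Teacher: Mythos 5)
Your proof is correct in substance and shares the paper's two-step architecture --- an outer limit $R_m\to\operatorname{Id}$ in $\mathcal{L}_c(E,E)$ of smoothing-and-truncating operators built from the two module structures, followed by an inner finite-rank approximation of each $R_m$ --- but the inner step is genuinely different. The paper's operators $G_m(f)=\chi_m*(\varphi_m f)$ (with $\varphi_m$ a dilated Schwartz plateau rather than your compactly supported cutoff, and with multiplication performed before convolution) are viewed as continuous maps $E\to\SSS(\RR^d)$ and composed with the partial sums $\tilde{S}_n$ of a Schauder (Hermite) basis of $\SSS(\RR^d)$; both limits then come from the Banach--Steinhaus theorem, together with \cite[Corollary 3.3]{dppv-3} for the pointwise convergence $G_m f\to f$. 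You instead arrange for $R_m e$ to be compactly supported, expand it in a Fourier series on a torus, and prove the uniform operator bound by hand from the two module inequalities. Your route is more self-contained (no Schauder basis, no appeal to the cited convergence result) and yields the stronger conclusion $S_{n,m}\to R_m$ in $\mathcal{L}_b(E)$; the paper's is shorter because Banach--Steinhaus supplies for free the equicontinuity that you verify explicitly. Your use of Remark \ref{rem-f-con-map} applied to $\partial^{\alpha}\varphi_m$ to get $\mathcal{C}^k$-bounds uniform over $\|e\|_E\leq 1$ is exactly the right tool (it uses $\SSS(\RR^d)\subseteq\check{E}'$, which holds by dualising $E\hookrightarrow\SSS'(\RR^d)$ and is used implicitly throughout the paper).

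One quantitative slip needs fixing: the torus $(\RR/4m\ZZ)^d$ is too small. Since $\supp(R_m e)\subseteq\overline{B(0,2m)}$ and the period is $4m$, the neighbouring translates of $\supp(R_m e)$ occurring in the periodisation $P$ of $R_m e$ touch $\overline{B(0,2m)}$, so any cutoff $\theta$ with $\theta\equiv 1$ on $B(0,2m)$ necessarily meets them; consequently $\theta\cdot(\mbox{partial Fourier sums})$ converges to $\theta P$, which differs from $R_m e$ by the fixed nonzero function $\sum_{\mathbf{k}\in\ZZ^d\backslash\{0\}}\theta\, T_{4m\mathbf{k}}(R_m e)$. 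Enlarging the period to $8m$ (or shrinking $\supp\psi$ relative to the period) separates these translates from $\supp\theta$, and the rest of your argument then goes through verbatim.
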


\begin{proof} Let $\chi,\varphi\in\SSS(\RR^d)$ be such that $\varphi(0)=1$ and $\int_{\RR^d}\chi(x)dx=1$. For each $n\in \ZZ_+$, set $\varphi_n(x)=\varphi(x/n)$ and $\chi_n(x)=n^d\chi(nx)$. Define $G_n:E\rightarrow \SSS(\RR^d)$, $G_n(f)=\chi_n*(\varphi_nf)$; of course they are well defined and continuous operators. By \cite[Corollary 3.3]{dppv-3}, $G_n(f)\rightarrow f$ in $E$, for all $f\in E$, and thus the Banach-Steinhaus theorem implies $G_n\rightarrow\operatorname{Id}$ in $\mathcal{L}_c(E,E)$.\\
\indent Take a Schauder basis $\varphi_j$, $j\in\ZZ_+$, for $\SSS(\RR^d)$; for example the Hermite functions. Then
\beqs
\varphi=\lim_{n\rightarrow \infty}\sum_{j=1}^n \langle f_j,\varphi\rangle\varphi_j\,\, \mbox{in}\,\, \SSS(\RR^d),\quad \forall\varphi\in\SSS(\RR^d),
\eeqs
where $f_j\in\SSS'(\RR^d)$, $j\in\ZZ_+$, are the coefficient functionals. Set
\beqs
\tilde{S}_n=\sum_{j=1}^n f_j\otimes \varphi_j\in\SSS'(\RR^d)\otimes \SSS(\RR^d),\quad n\in\ZZ_+.
\eeqs
Then the Banach-Steinhaus theorem \cite[Theorem 4.5, p. 85]{Sch} implies $\tilde{S}_n\rightarrow \operatorname{Id}$ in $\mathcal{L}_c(\SSS(\RR^d),\SSS(\RR^d))=\mathcal{L}_b(\SSS(\RR^d),\SSS(\RR^d))$ (as $\SSS(\RR^d)$ is a Montel space). For $n,m\in\ZZ_+$, we define
\beq
S_{n,m}=\tilde{S}_n\circ G_m=\sum_{j=1}^n\varphi_m(\check{\chi}_m*f_j)\otimes \varphi_j\in \SSS(\RR^d)\otimes \SSS(\RR^d)\subseteq E'\otimes E.
\eeq
Now, for each fixed $m\in\ZZ_+$, $S_{n,m}\rightarrow G_m$, as $n\rightarrow\infty$, in $\mathcal{L}_c(E,\SSS(\RR^d))$ and consequently in $\mathcal{L}_c(E,E)$ as well. As we already proved that $G_m\rightarrow \operatorname{Id}$ in $\mathcal{L}_c(E,E)$, the proof is complete.
\end{proof}

\begin{remark}
The same proof works in the case when $E$ is a TMIB space of ultradistributions of class $(M_p)$ or $\{M_p\}$ in the sense of \cite{dppv-3}, where the sequence $M_p$, $p\in\NN$, satisfies the same conditions as in \cite{dppv-3} (notice that we never used functions with compact support in the proof!); we point out that when $M_p$, $p\in\NN$, satisfies the same conditions as in \cite{dppv-3}, the Hermite functions are Schauder bases for the Gelfand-Shilov spaces $\SSS^{(M_p)}(\RR^d)$ and $\SSS^{\{M_p\}}(\RR^d)$ as well (see \cite{lan,dj-j}). Consequently, the modulation spaces associated to TMIB spaces of ultradistributions of class $(M_p)$ or $\{M_p\}$ \cite[Section 4]{dppv-3} (see also \cite{C-P-R-T,tof-na}, \cite[Section 5]{cor-nic-rod}) also satisfy the weak sequential approximation property.
\end{remark}

\begin{remark}
In \cite{drw}, it is shown that the mixed-norm Lebesgue spaces $L^{p_1,p_2}_{\eta}$, $p_1,p_2\in[1,\infty)$, and the classical modulation spaces $M^{p_1,p_2}_{\eta}$, $p_1,p_2\in[1,\infty)$, satisfy the so called metric approximation property when $\eta$ satisfies certain conditions. This amounts to saying that the identity mapping can be approximated (in the topology of compact convex circled convergence) with a net consisting of finite rank operators having operator norm bounded by $1$. Lemma \ref{wap} supplements these results as it claims that the approximation can, in fact, be done by a sequence of sequences for a far more general class of spaces; but we do not claim that $S_{n,m}\in E'\otimes E$, $n,m\in\ZZ_+$, is uniformly bounded in the operator topology.
\end{remark}

Consequently, if $E$ and $F$ are any two TMIB spaces on $\RR^d$, $E\hat{\otimes}_{\pi} F$ is always a TMIB space on $\RR^{2d}$; furthermore \cite[Theorem 12, p. 240]{kothe2} implies that $E\hat{\otimes}_{\pi} F$ is continuously included into $E\hat{\otimes}_{\epsilon} F$. Thus, if $E_j$ and $F_j$, $j=1,2$, are four TMIB spaces on $\RR^d$ such that $E_1\hookrightarrow E_2$ and $F_1\hookrightarrow F_2$, the facts that $E_j\hat{\otimes}_{\pi} F_j$ and $E_j\hat{\otimes}_{\epsilon} F_j$, $j=1,2$, are TMIB spaces on $\RR^{2d}$ imply that the following diagram commutes
\begin{center}
\begin{tikzpicture}
  \matrix (m) [matrix of math nodes,row sep={1.3em}, column sep={3em}]
  {
  & E_1\hat{\otimes}_{\pi} F_1 & E_1\hat{\otimes}_{\epsilon} F_1 &\\
  \SSS(\RR^{2d}) & & & \SSS'(\RR^{2d})\\
  & E_2\hat{\otimes}_{\pi} F_2 & E_2\hat{\otimes}_{\epsilon} F_2 &\\
   };
   \draw[right hook ->] (m-2-1) -- (m-1-2);
   \draw[right hook ->] (m-2-1) -- (m-3-2);
   \draw[right hook ->] (m-1-2) -- (m-1-3);
   \draw[right hook ->] (m-1-2) -- (m-3-2);
   \draw[right hook ->] (m-3-2) -- (m-3-3);
   \draw[right hook ->] (m-1-3) -- (m-2-4);
   \draw[right hook ->] (m-1-3) -- (m-3-3);
   \draw[right hook ->] (m-3-3) -- (m-2-4);
\end{tikzpicture}
\end{center}

\section{Amalgam spaces with TMIB and DTMIB spaces as local components}\label{amalgam-spaces-s}

The theory of amalgam spaces was developed more than 40 years ago for a more general class of spaces than the (D)TMIB space \cite{feich,fe81} (see also \cite{ben-oh,fe87-1,he03}). However, keeping our end goal in mind, we defined them only for (D)TMIB spaces and recall only the properties we need.\\
\indent We start by recalling the definition of a bounded uniform partition of unity \cite[Definition]{feich}. For any measurable set $A\subseteq \RR^d$, denote by $\theta_A$ the characteristic function of $A$. Let $E$ be a TMIB or a DTMIB space of distributions on $\RR^d$. Let $\Lambda$ be a countable index set, $U\subseteq \RR^d$ a bounded open neighbourhood of $0$ and $M>0$. A family $\{\phi_{\lambda}\in\DD(\RR^d)|\, \lambda\in\Lambda\}$ is called a \textit{bounded uniform partition of unity} of norm $M$ and size $U$ for $E$ (from now, abbreviated as BUPU), if there exists a discrete family of points $\{y_{\lambda}\}_{\lambda\in\Lambda}$ such that:
\begin{itemize}
\item[$(i)$] $\|\phi_{\lambda}\|_{\mathcal{F}L^1_{\nu_E}}\leq M$, for all $\lambda\in\Lambda$;
\item[$(ii)$] $\supp \phi_{\lambda}\subseteq y_{\lambda}+U$, for all $\lambda\in\Lambda$;
\item[$(iii)$] for any compact subset $K$ of $\RR^d$ there exists $C_K>0$ such that $|\{\lambda\in\Lambda|\, x\in y_{\lambda}+K\}|\leq C_K$, for all $x\in\RR^d$;
\item[$(iv)$] $\sum_{\lambda\in\Lambda} \phi_{\lambda}(x)=1$, for all $x\in\RR^d$.\footnote{the series is locally finite because of $(ii)$ and $(iii)$}
\end{itemize}
Let $\eta$ be a polynomially bounded weight on $\RR^d$. Given any such BUPU $\{\phi_{\lambda}\}_{\lambda\in\Lambda}$ and a compact set $W\supseteq U$, one defines the amalgam space $W(E,L^p_{\eta})$, $1\leq p\leq \infty$, with local component $E$ and global component $L^p_{\eta}$, as the space of all $f\in E_{\operatorname{loc}}=\{f\in\DD'(\RR^d)|\, \chi f\in E,\, \forall \chi\in\DD(\RR^d)\}$ such that
\beq\label{nor-am-s}
\left\|\sum_{\lambda\in \Lambda}\|f\phi_{\lambda}\|_E T_{y_{\lambda}}\theta_W\right\|_{L^p_{\eta}(\RR^d)}<\infty;
\eeq
$W(E,L^{\infty}_{\eta,0})$ is defined analogously: it is the space of all $f\in E_{\operatorname{loc}}$ such that $\sum_{\lambda\in \Lambda}\|f\phi_{\lambda}\|_E T_{y_{\lambda}}\theta_W\in L^{\infty}_{\eta,0}(\RR^d)$ with norm \eqref{nor-am-s} with $p=\infty$. Employing similar technique as in the proof of \cite[Theorem 2]{feich}, one can prove that any other BUPU $\{\tilde{\phi}_{\tilde{\lambda}}\}_{\tilde{\lambda}\in\tilde{\Lambda}}$ with size $\tilde{U}$ and norm $\tilde{M}$ and $\tilde{W}\supseteq \tilde{U}$ any compact set gives the same spaces $W(E,L^p_{\eta})$, $1\leq p\leq\infty$, and $W(E,L^{\infty}_{\eta,0})$ with equivalent norms.\\
\indent Let $\phi\in \DD(\RR^d)$, be such that $0\leq \phi\leq 1$, $\supp\phi\subseteq (-1,1)^d$ and
\beq
\sum_{\mathbf{k}\in\ZZ^d}\phi_{\mathbf{k}}(x)=1,\,\,\, \forall x\in\RR^d,\quad \mbox{where}\,\,\, \phi_{\mathbf{k}}=T_{\mathbf{k}}\phi,\, \mathbf{k}\in\ZZ^d.
\eeq
Then $\{\phi_{\mathbf{k}}\}_{\mathbf{k}\in\ZZ^d}$ is a BUPU of size $(-1,1)^d$ for any TMIB or DTMIB space $E$ with $M=\|\phi\|_{\mathcal{F}L^1_{\nu_E}}(=\|\phi_{\mathbf{k}}\|_{\mathcal{F}L^1_{\nu_E}},\, \forall \mathbf{k}\in\ZZ^d)$ and $y_{\mathbf{k}}=\mathbf{k}$, $\mathbf{k}\in\ZZ^d$. From now on we will always employ this BUPU and $W=[-1,1]^d$ to define the norms on $W(E,L^p_{\eta})$, $1\leq p\leq\infty$, and $W(E,L^{\infty}_{\eta,0})$, for any TMIB or DTMIB space $E$. It is straightforward to check that $W(E,L^p_{\eta})$, $1\leq p\leq\infty$, and $W(E,L^{\infty}_{\eta,0})$ are Banach spaces.

\begin{remark}\label{rem-eq-nor-se-iis}
Let $E$ be a TMIB or a DTMIB space on $\RR^d$ and $\eta$ a polynomially bounded weight on $\RR^d$. It is straightforward to verify that for any $p\in[1,\infty]$ and $f\in E_{\operatorname{loc}}$, $f\in W(E,L^p_{\eta})$ if and only if $\{\|f\phi_{\mathbf{k}}\|_E\eta(\mathbf{k})\}_{\mathbf{k}\in\ZZ^d}\in\ell^p(\ZZ^d)$ and there is $C\geq 1$ such that
\beqs
C^{-1}\|\{\|f\phi_{\mathbf{k}}\|_E \eta(\mathbf{k})\}_{\mathbf{k}\in\ZZ^d}\|_{\ell^p(\ZZ^d)}\leq \|f\|_{W(E,L^p_{\eta})}\leq C\|\{\|f\phi_{\mathbf{k}}\|_E\eta(\mathbf{k})\}_{\mathbf{k}\in\ZZ^d}\|_{\ell^p(\ZZ^d)},
\eeqs
for all $f\in W(E,L^p_{\eta})$ and all $p\in[1,\infty]$ (i.e. $C$ is independent of $p$ as well). Similarly, given $f\in E_{\operatorname{loc}}$, $f\in W(E,L^{\infty}_{\eta,0})$ if and only if $\{\|f\phi_{\mathbf{k}}\|_E\eta(\mathbf{k})\}_{\mathbf{k}\in\ZZ^d}\in c_0(\ZZ^d)$ and
\beqs
C^{-1}\|\{\|f\phi_{\mathbf{k}}\|_E \eta(\mathbf{k})\}_{\mathbf{k}\in\ZZ^d}\|_{\ell^{\infty}(\ZZ^d)}\leq \|f\|_{W(E,L^{\infty}_{\eta,0})}\leq C\|\{\|f\phi_{\mathbf{k}}\|_E \eta(\mathbf{k})\}_{\mathbf{k}\in\ZZ^d}\|_{\ell^{\infty}(\ZZ^d)},
\eeqs
for all $f\in W(E,L^{\infty}_{\eta,0})$ with the same constant $C$.
\end{remark}

We collect the properties we need of $W(E,L^p_{\eta})$, $1\leq p\leq\infty$, and $W(E,L^{\infty}_{\eta,0})$ in the following result; here and throughout the rest of the article, we employ the notation $J_n=\ZZ^d\cap[-n,n]^d$, $n\in\ZZ_+$.

\begin{lemma}\label{dual-ams-s1}
Let $\eta$ be a polynomially bounded weight on $\RR^d$.
\begin{itemize}
\item[$(i)$] If $E$ is a TMIB or a DTMIB space on $\RR^d$, then for any $1\leq p_1\leq p_2<\infty$ the following continuous inclusions hold true:
\beq\label{inc-s-fr}
\SSS(\RR^d)\rightarrow  W(E,L^{p_1}_{\eta})\rightarrow W(E,L^{p_2}_{\eta}) \rightarrow W(E,L^{\infty}_{\eta,0})\rightarrow W(E,L^{\infty}_{\eta}) \rightarrow \SSS'(\RR^d).
\eeq
\item[$(ii)$] If $E$ is a TMIB space on $\RR^d$ then $W(E,L^p_{\eta})$, $1\leq p<\infty$, and $W(E,L^{\infty}_{\eta,0})$ are TMIB spaces on $\RR^d$ as well.
\item[$(iii)$] Let $E$ be a TMIB space on $\RR^d$. Then the strong dual $W(E,L^p_{\eta})'_b$ of $W(E,L^p_{\eta})$, $1\leq p<\infty$, is equal to $W(E',L^q_{1/\eta})$, $p^{-1}+q^{-1}=1$, with equivalent norms. Furthermore, the strong dual $W(E,L^{\infty}_{\eta,0})'_b$ of $W(E,L^{\infty}_{\eta,0})$ is equal to $W(E',L^1_{1/\eta})$ with equivalent norms. Consequently, $W(E',L^p_{1/\eta})$, $1\leq p\leq \infty$, are DTMIB spaces on $\RR^d$.
\end{itemize}
\end{lemma}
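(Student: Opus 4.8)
The whole argument rests on the discrete description of the amalgam norm from Remark~\ref{rem-eq-nor-se-iis}: writing $c_{\mathbf{k}}(f)=\|f\phi_{\mathbf{k}}\|_E$, the map $f\mapsto (c_{\mathbf{k}}(f))_{\mathbf{k}\in\ZZ^d}$ identifies $W(E,L^p_\eta)$ (resp.\ $W(E,L^{\infty}_{\eta,0})$) with the space of $f\in E_{\operatorname{loc}}$ whose coefficient sequence lies in the weighted space $\ell^p_\eta(\ZZ^d)$ (resp.\ $c_0$ with weight $\eta$), with equivalent norms and constant independent of $p$. The three inner inclusions in \eqref{inc-s-fr} are then immediate from the elementary sequence inclusions $\ell^{p_1}_\eta\subseteq\ell^{p_2}_\eta\subseteq c_{0,\eta}\subseteq\ell^{\infty}_\eta$ and the attendant norm inequalities. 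Only the two endpoints need a genuine estimate. For $\SSS(\RR^d)\to W(E,L^{p_1}_\eta)$ I would write $f\phi_{\mathbf{k}}=T_{\mathbf{k}}(\phi\cdot T_{-\mathbf{k}}f)$, so that $\|f\phi_{\mathbf{k}}\|_E\le\omega_E(\mathbf{k})\,C\rho(\phi\cdot T_{-\mathbf{k}}f)$ for a fixed continuous seminorm $\rho$ on $\SSS(\RR^d)$ (using $\SSS\hookrightarrow E$); since $\phi\cdot T_{-\mathbf{k}}f$ is a localization of a Schwartz function near the origin, $\rho(\phi\cdot T_{-\mathbf{k}}f)$ decays faster than any power of $(1+|\mathbf{k}|)$, dominating the polynomial growth of $\omega_E$ and $\eta$, so the coefficient sequence lies in $\ell^1\subseteq\ell^{p_1}$ with norm controlled by finitely many Schwartz seminorms of $f$. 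For $W(E,L^{\infty}_\eta)\to\SSS'(\RR^d)$ I would fix $\psi\in\DD(\RR^d)$ equal to $1$ on $\supp\phi$, set $\psi_{\mathbf{k}}=T_{\mathbf{k}}\psi$, and pair $f$ with $\varphi\in\SSS(\RR^d)$ via $\langle f,\varphi\rangle=\sum_{\mathbf{k}}\langle f\phi_{\mathbf{k}},\psi_{\mathbf{k}}\varphi\rangle$; each term is bounded by $\|f\phi_{\mathbf{k}}\|_E$ times a seminorm of the rapidly decaying localized function $\psi_{\mathbf{k}}\varphi$, so the series converges absolutely and continuously in $\varphi$.

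For (ii), axiom (I) is the continuity from (i) together with density of $\SSS(\RR^d)$, which I would get by first truncating the coefficient sequence to $J_n$ (permissible because the global component is $\ell^p$ with $p<\infty$, or $c_0$) and then approximating each compactly supported piece $f\phi_{\mathbf{k}}\in E$ by Schwartz functions via $\SSS\hookrightarrow E$. In axiom (II), modulation is easy: $M_\xi$ commutes with multiplication by $\phi_{\mathbf{k}}$, whence $\|(M_\xi f)\phi_{\mathbf{k}}\|_E\le\nu_E(\xi)\|f\phi_{\mathbf{k}}\|_E$ and the coefficient sequence is merely scaled. For translation I would use $(T_x f)\phi_{\mathbf{k}}=T_x(f\cdot T_{-x}\phi_{\mathbf{k}})$, expand $T_{-x}\phi_{\mathbf{k}}=\sum_{\mathbf{j}}\phi_{\mathbf{j}}\,T_{-x}\phi_{\mathbf{k}}$ over the finitely many neighbours meeting its support, and apply the $\mathcal F L^1_{\nu_E}$-module bound; equivalently one invokes the stated independence of $W(E,L^p_\eta)$ of the BUPU applied to the $x$-shifted partition. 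Axiom (III) then follows by combining these with the polynomial bounds on $\omega_E,\nu_E$ and the moderateness of $\eta$, giving $\omega_{W}(x)\lesssim\omega_E(x)(1+|x|)^{\tau}$ and $\nu_{W}(\xi)\lesssim\nu_E(\xi)(1+|\xi|)^{\tau}$. The same checks apply to $W(E,L^{\infty}_{\eta,0})$, with the routine extra observation that all these operators preserve the vanishing-at-infinity condition.

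The substance is in (iii), where my plan is a retraction–coretraction argument. With $\psi_{\mathbf{k}}=T_{\mathbf{k}}\psi$ as above, define the analysis map $R\colon W(E,L^p_\eta)\to\ell^p_\eta(\ZZ^d;E)$, $Rf=(f\phi_{\mathbf{k}})_{\mathbf{k}}$, and the synthesis map $S\colon\ell^p_\eta(\ZZ^d;E)\to W(E,L^p_\eta)$, $S(g_{\mathbf{k}})_{\mathbf{k}}=\sum_{\mathbf{k}}\psi_{\mathbf{k}}g_{\mathbf{k}}$; both are bounded (the synthesis bound uses the finite overlap of the $\psi_{\mathbf{k}}$, the $\mathcal F L^1_{\nu_E}$-module structure, and the moderateness of $\eta$), and $SR=\operatorname{Id}$ because $\psi_{\mathbf{k}}\phi_{\mathbf{k}}=\phi_{\mathbf{k}}$ and $\sum_{\mathbf{k}}\phi_{\mathbf{k}}=1$. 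Dualising, $S^*$ is a coretraction and $R^*$ a retraction, so $W(E,L^p_\eta)'$ is the image under $R^*$ of $(\ell^p_\eta(\ZZ^d;E))'$. Since the index set is discrete (atomic), this vector-valued dual is genuinely $\ell^q_{1/\eta}(\ZZ^d;E')$ for every $1\le p<\infty$, with no Radon--Nikod\'ym hypothesis required, and the $c_0$-version gives $\ell^1_{1/\eta}(\ZZ^d;E')$. To a sequence $(g_{\mathbf{k}})\in\ell^q_{1/\eta}(\ZZ^d;E')$ I associate $g=\sum_{\mathbf{k}}\phi_{\mathbf{k}}g_{\mathbf{k}}\in E'_{\operatorname{loc}}$, check that $\langle f,g\rangle=\sum_{\mathbf{k}}\langle f\phi_{\mathbf{k}},g_{\mathbf{k}}\rangle$ reproduces the functional $R^*(g_{\mathbf{k}})$ and agrees with the $\SSS$--$\SSS'$ pairing, and estimate $\|g\phi_{\mathbf{j}}\|_{E'}\lesssim\sum_{\mathbf{k}}\|g_{\mathbf{k}}\|_{E'}$ over the finitely many neighbours $\mathbf{k}$ of $\mathbf{j}$ (via $\phi_{\mathbf{k}}\phi_{\mathbf{j}}$ and the module bound); the resulting finite-band operator and the moderateness of $\eta$ keep the $W(E',L^q_{1/\eta})$-norm equivalent to the $\ell^q_{1/\eta}(\ZZ^d;E')$-norm. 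Injectivity of $g\mapsto\langle\,\cdot\,,g\rangle$ follows from density of $\SSS$, surjectivity is the retract identity, and the open mapping theorem upgrades this to the claimed equivalence of norms. Finally, since $W(E,L^p_\eta)$ for $1\le p<\infty$ and $W(E,L^{\infty}_{\eta,0})$ are TMIB by (ii), their duals $W(E',L^q_{1/\eta})$ are DTMIB by definition, covering all exponents $q\in[1,\infty]$.

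The main obstacle will be precisely this last identification in (iii): controlling the overlaps of the $\phi_{\mathbf{k}}$ so that the abstract dual $\ell^q_{1/\eta}(\ZZ^d;E')$ reassembles into a genuine amalgam space, and verifying that the pairing arising from the retract coincides with the $\SSS$--$\SSS'$ duality, so that $W(E',L^q_{1/\eta})$ is realised as the dual as a space of distributions and not merely as an abstractly isomorphic Banach space.
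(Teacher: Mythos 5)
Your overall architecture matches the paper's: part (i) via the discrete characterisation of Remark \ref{rem-eq-nor-se-iis} plus the sequence-space inclusions (the paper dismisses the two endpoint inclusions as straightforward; your explicit estimates for them are correct), and part (iii) via duality of the coefficient-sequence description. For (iii) the paper simply invokes \cite[Theorem 2.8]{fegr85} after reducing to the discrete norm, whereas you reprove that result in this special case by the retraction--coretraction argument; your version is self-contained and the key points you isolate (atomicity of $\ZZ^d$ making $(\ell^p_\eta(\ZZ^d;E))'=\ell^q_{1/\eta}(\ZZ^d;E')$ without any Radon--Nikod\'ym hypothesis, and the reassembly of the abstract dual into $W(E',L^q_{1/\eta})$ via the finite overlap of the $\phi_{\mathbf k}$ and the $\mathcal{F}L^1_{\nu_{E'}}$-module bound) are exactly the right ones.

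There is, however, one step in (ii) that fails as written: the density of $\SSS(\RR^d)$. After truncating to $J_n$ you propose to approximate each compactly supported piece $f\phi_{\mathbf k}\in E$ ``by Schwartz functions via $\SSS\hookrightarrow E$''. Smallness of $\|f\phi_{\mathbf k}-g_{\mathbf k}\|_E$ does \emph{not} control $\|f\phi_{\mathbf k}-g_{\mathbf k}\|_{W(E,L^p_\eta)}$: by \eqref{inc-am-sp-n} the $E$-norm dominates only the $W(E,L^\infty_0)$-norm, and the amalgam norm of the error is $\bigl(\sum_{\mathbf m}\|(f\phi_{\mathbf k}-g_{\mathbf k})\phi_{\mathbf m}\|_E^p\,\eta(\mathbf m)^p\bigr)^{1/p}$, an infinite sum in which each term, but not the total, is bounded by $\|f\phi_{\mathbf k}-g_{\mathbf k}\|_E\|\phi\|_{\mathcal{F}L^1_{\nu_E}}\eta(\mathbf m)$. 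A generic Schwartz approximant obtained from the abstract density of $\SSS$ in $E$ has tails spread over all of $\RR^d$ whose Schwartz seminorms are not controlled by the approximation, so the sum over $\mathbf m$ cannot be estimated. The repair is to keep the approximant supported in a \emph{fixed} compact set: this is what the paper does, writing the approximant as $\chi_{n_2}*(\psi_{n_1+1}f)\in\DD((-2n_1-3,2n_1+3)^d)$ with $\chi_{n_2}$ a compactly supported mollifier (using \cite[Corollary 1]{dpv} for $\chi_{n_2}*e\to e$ in $E$), so that only the finitely many $\mathbf m\in J_{2n_1+3}$ contribute and each contribution is bounded by the $E$-norm of the error. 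With that substitution your argument for (ii) goes through; the remaining verifications of axioms (II) and (III) you give are correct and agree with what the paper leaves implicit.
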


\begin{proof} With $E$ a TMIB or a DTMIB space on $\RR^d$, it is straightforward to verify that $\SSS(\RR^d)\subseteq W(E,L^1_{\eta})$ and $W(E,L^{\infty}_{\eta})\subseteq \SSS'(\RR^d)$ and that these inclusions are continuous. The rest of part $(i)$ follows from Remark \ref{rem-eq-nor-se-iis} and the continuous inclusions $\ell^{p_1}\subseteq \ell^{p_2}\subseteq c_0\subseteq \ell^{\infty}$, when $1\leq p_1\leq p_2<\infty$.\\
\indent The only non trivial part of $(ii)$ is the density of $\SSS(\RR^d)$ in $W(E,L^p_{\eta})$, $1\leq p<\infty$, and in $W(E,L^{\infty}_{\eta,0})$. We prove this for $W(E,L^p_{\eta})$, $1\leq p<\infty$, as the proof for $W(E,L^{\infty}_{\eta,0})$ is analogous.\\
\indent Pick $\psi\in\DD(\RR^d)$ such that $0\leq \psi\leq 1$, $\psi=1$ on $[-1,1]^d$ and $\supp\psi\subseteq (-2,2)^d$. Denote, $\psi_n(x)=\psi(x/n)$, $n\in\ZZ_+$. Take nonnegative $\chi\in\DD((-1/2,1/2)^d)$ such that $\int_{\RR^d}\chi(x)dx=1$ and set $\chi_n(x)=n^d\chi(nx)$, $n\in\ZZ_+$; consequently $\int_{\RR^d}\chi_n(x)dx=1$, $\forall n\in\ZZ_+$. Let $f\in W(E,L^p_{\eta})$ and $\varepsilon>0$ be arbitrary but fixed. In view of Remark \ref{rem-eq-nor-se-iis}, there exists $n_1\in\ZZ_+$ such that
\beqs
\left(\sum_{\mathbf{k}\in\ZZ^d\backslash J_{n_1}}\|\phi_{\mathbf{k}}f\|_E^p\eta(\mathbf{k})^p\right)^{1/p}\leq \varepsilon/(4C(1+\sup_{n\in\ZZ_+}\|\psi_n\|_{\mathcal{F}L^1_{\nu_E}})).
\eeqs
with $C\geq 1$ being the constant from Remark \ref{rem-eq-nor-se-iis} (it is straightforward to check $\sup_{n\in\ZZ_+}\|\psi_n\|_{\mathcal{F}L^1_{\nu_E}}<\infty$). Furthermore, \cite[Corollary 1]{dpv} implies that for each $e\in E$, $\chi_n*e\rightarrow e$, as $n\rightarrow \infty$, in $E$. Consequently, there exists $n_2\in\ZZ_+$ such that
\beq\label{est-ter-sed}
\|\psi_{n_1+1}f-\chi_{n_2}*(\psi_{n_1+1}f)\|_E\leq \varepsilon\cdot \left(2C\|\phi\|_{\mathcal{F}L^1_{\nu_E}}(4n_1+7)^d \cdot \max_{\mathbf{k}\in J_{2n_1+3}} \eta(\mathbf{k})\right)^{-1}.
\eeq
Clearly, $\chi_{n_2}*(\psi_{n_1+1}f)\in \DD((-2n_1-3,2n_1+3)^d)$. Notice that
\begin{align*}
\|f-&\chi_{n_2}*(\psi_{n_1+1} f)\|_{W(E,L^p_{\eta})}\\
&\leq C\left(\sum_{\mathbf{k}\in\ZZ^d} \|\phi_{\mathbf{k}} f-\phi_{\mathbf{k}}\psi_{n_1+1} f\|_E^p\eta(\mathbf{k})^p\right)^{1/p}\\
&\quad+ C\left(\sum_{\mathbf{k}\in\ZZ^d} \|\phi_{\mathbf{k}}\psi_{n_1+1} f- \phi_{\mathbf{k}}(\chi_{n_2}*(\psi_{n_1+1} f))\|_E^p\eta(\mathbf{k})^p\right)^{1/p}=S_1+S_2.
\end{align*}
To estimate $S_1$, notice that $\psi_{n_1+1}=1$ on $[-n_1-1,n_1+1]^d$ and thus
\beqs
S_1\leq C\left(\sum_{\mathbf{k}\in\ZZ^d\backslash J_{n_1}} \|\phi_{\mathbf{k}} f\|_E^p\eta(\mathbf{k})^p\right)^{1/p}+C\|\psi_{n_1+1}\|_{\mathcal{F}L^1_{\nu_E}} \left(\sum_{\mathbf{k}\in\ZZ^d\backslash J_{n_1}} \|\phi_{\mathbf{k}} f\|_E^p\eta(\mathbf{k})^p\right)^{1/p}\leq \frac{\varepsilon}{2}.
\eeqs
To estimate $S_2$, observe that all of the terms where $\mathbf{k}\not\in J_{2n_1+3}$ are zero, so \eqref{est-ter-sed} implies
\beqs
S_2\leq C\|\phi\|_{\mathcal{F}L^1_{\nu_E}}\left(\sum_{\mathbf{k}\in J_{2n_1+3}} \|\psi_{n_1+1} f-\chi_{n_2}*(\psi_{n_1+1} f)\|_E^p\eta(\mathbf{k})^p\right)^{1/p}\leq \varepsilon/2.
\eeqs
Consequently, $\|f-\chi_{n_2}*(\psi_{n_1+1} f)\|_{W(E,L^p_{\eta})}\leq \varepsilon$ which completes the proof of $(ii)$.\\
\indent Now, in view of Remark \ref{rem-eq-nor-se-iis}, part $(iii)$ follows from \cite[Theorem 2.8]{fegr85}.
\end{proof}

\begin{remark}
If $E$ is a TMIB space on $\RR^d$, Remark \ref{rem-eq-nor-se-iis} together with Lemma \ref{dual-ams-s1} yield
\beq\label{inc-am-sp-n}
\SSS(\RR^d)\hookrightarrow W(E,L^1)\hookrightarrow E\hookrightarrow W(E,L^{\infty}_0) \hookrightarrow \SSS'(\RR^d).
\eeq
Consequently, Lemma \ref{dual-ams-s1} $(iii)$ gives the following continuous inclusions:
\beq
\SSS(\RR^d)\rightarrow W(E',L^1)\rightarrow E'\rightarrow W(E',L^{\infty}) \rightarrow \SSS'(\RR^d).
\eeq
\end{remark}

Let $E$ be a TMIB or a DTMIB space on $\RR^d$. Since for any $g\in \mathcal{F}L^1_{\nu_E}$, the mapping $g\mapsto T_xg$, $\RR^d\rightarrow \mathcal{F}L^1_{\nu_E}$, is continuous, the mapping $x\mapsto eT_x g$, $\RR^d\rightarrow E$, is continuous for all $e\in E$, $g\in\mathcal{F}L^1_{\nu_E}$. Thus given any $f\in E_{\operatorname{loc}}$ and $\chi\in\DD(\RR^d)\backslash\{0\}$, $x\rightarrow \|fT_x\chi\|_E$, $\RR^d\rightarrow [0,\infty)$, is a continuous function and therefore it make sense to take its $L^p_{\eta}$ norm for any $1\leq p\leq\infty$; of course, it may be infinite. The elements of the amalgam space $W(E,L^p_{\eta})$ can be characterised by the finiteness of this norm.

\begin{lemma}\label{equ-con-cdis-nor}
Let $E$ be a TMIB or a DTMIB space on $\RR^d$, $\eta$ a polynomially bounded weight on $\RR^d$ and $\chi\in\DD(\RR^d)\backslash\{0\}$. For any $1\leq p\leq\infty$ and $f\in E_{\operatorname{loc}}$, $f\in W(E,L^p_{\eta})$ if and only if $x\mapsto \|fT_x\chi\|_E$, $\RR^d\rightarrow [0,\infty)$, belongs to $L^p_{\eta}(\RR^d)$. Furthermore,
\beq\label{nor-int-a}
f\mapsto \left(\int_{\RR^d}\|fT_x\chi\|_E^p\eta(x)^pdx\right)^{1/p}
\eeq
is a norm on $W(E,L^p_{\eta})$ equivalent to $\|\cdot\|_{W(E,L^p_{\eta})}$.\\
\indent For any $f\in E_{\operatorname{loc}}$, $f\in W(E,L^{\infty}_{\eta,0})$ if and only if $x\mapsto \|fT_x\chi\|_E$, $\RR^d\rightarrow [0,\infty)$, belongs to $\mathcal{C}_{\eta,0}(\RR^d)$ and $\|\cdot\|_{W(E,L^{\infty}_{\eta,0})}$ is equivalent to the norm \eqref{nor-int-a} with $p=\infty$.
\end{lemma}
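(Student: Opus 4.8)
The plan is to compare the \emph{continuous} norm \eqref{nor-int-a} with the discrete BUPU norm, which by Remark \ref{rem-eq-nor-se-iis} is already equivalent to $\|\cdot\|_{W(E,L^p_{\eta})}$; so it suffices to establish two-sided estimates between $\big(\int_{\RR^d}\|fT_x\chi\|_E^p\eta(x)^pdx\big)^{1/p}$ and $\big(\sum_{\mathbf{k}\in\ZZ^d}\|f\phi_{\mathbf{k}}\|_E^p\eta(\mathbf{k})^p\big)^{1/p}$ (with the obvious $p=\infty$, resp.\ $c_0$, analogues). Throughout I would use three elementary facts: the module estimate $\|g\cdot e\|_E\leq\|g\|_{\mathcal{F}L^1_{\nu_E}}\|e\|_E$ (valid for both TMIB and DTMIB spaces); the translation invariance $\|T_xg\|_{\mathcal{F}L^1_{\nu_E}}=\|g\|_{\mathcal{F}L^1_{\nu_E}}$ (since $\mathcal{F}^{-1}T_xg=M_x\mathcal{F}^{-1}g$ has the same modulus) together with the fact that $\mathcal{F}L^1_{\nu_E}$ is a Banach algebra under pointwise multiplication; and the moderateness of $\eta$, which makes $\eta(x)$ and $\eta(\mathbf{k})$ comparable whenever $x-\mathbf{k}$ stays in a fixed bounded set.

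\emph{First direction (continuous $\lesssim$ discrete).} For fixed $x$ I would write $fT_x\chi=\sum_{\mathbf{k}\in S_x}(f\phi_{\mathbf{k}})\cdot(T_x\chi)$, where $S_x=\{\mathbf{k}:\supp\phi_{\mathbf{k}}\cap(x+\supp\chi)\neq\emptyset\}$; property $(iii)$ of the BUPU and the compactness of $\supp\chi$ bound $|S_x|$ by a constant $N$ independent of $x$, and for $\mathbf{k}\in S_x$ the index $\mathbf{k}$ stays within a fixed bounded set of $x$. The module estimate and translation invariance then give $\|fT_x\chi\|_E\leq\|\chi\|_{\mathcal{F}L^1_{\nu_E}}\sum_{\mathbf{k}\in S_x}\|f\phi_{\mathbf{k}}\|_E$. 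Raising to the $p$-th power (via the power-mean inequality and $|S_x|\leq N$), multiplying by $\eta(x)^p$, and integrating, I would interchange sum and integral: for each $\mathbf{k}$ the contributing $x$-region lies in a fixed translate $\mathbf{k}+\Omega'$ of bounded measure on which $\eta(x)\leq C\eta(\mathbf{k})$, yielding $\int_{\RR^d}\|fT_x\chi\|_E^p\eta(x)^pdx\leq C\sum_{\mathbf{k}}\|f\phi_{\mathbf{k}}\|_E^p\eta(\mathbf{k})^p$. The $p=\infty$ case is identical, with the integral replaced by a supremum.

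\emph{Second direction (discrete $\lesssim$ continuous); the crux.} The obstacle is that one cannot estimate $\|f\phi_{\mathbf{k}}\|_E$ by sampling $\|fT_y\chi\|_E$ at finitely many points, since $x\mapsto\|fT_x\chi\|_E$ has no pointwise sub-mean-value property. Instead I would set $A=\int_{\RR^d}|\chi(s)|^2ds>0$ and use the exact reproducing identity $\phi=\int_{\RR^d}\Gamma_x\,(T_x\chi)\,dx$ with $\Gamma_x=A^{-1}\phi\,\overline{T_x\chi}$, valid because $\int_{\RR^d}|\chi(t-x)|^2dx\equiv A$. Here $\Gamma_x\in\DD(\RR^d)$ is supported (in $x$) in the fixed compact set $\Omega:=\supp\phi-\supp\chi$, and by the algebra property and translation invariance $\|\Gamma_x\|_{\mathcal{F}L^1_{\nu_E}}\leq A^{-1}\|\phi\|_{\mathcal{F}L^1_{\nu_E}}\|\chi\|_{\mathcal{F}L^1_{\nu_E}}=:C_0$ uniformly in $x$. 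Multiplying by $f$ and reading the identity as a Bochner integral in $E$ (legitimate because, after localizing $f$ by a cutoff equal to $1$ near $\overline{\Omega}+\supp\chi$, the map $x\mapsto fT_x\chi$ is continuous into $E$ by the remarks preceding the lemma) gives $\|f\phi\|_E\leq C_0\int_{\Omega}\|fT_x\chi\|_Edx$. Translating by $\mathbf{k}$ and substituting $y=\mathbf{k}+x$ yields $\|f\phi_{\mathbf{k}}\|_E\leq C_0\int_{\mathbf{k}+\Omega}\|fT_y\chi\|_Edy$; then Jensen's inequality on $\mathbf{k}+\Omega$, moderateness ($\eta(\mathbf{k})\leq C\eta(y)$ for $y\in\mathbf{k}+\Omega$), and the bounded overlap of the sets $\{\mathbf{k}+\Omega\}_{\mathbf{k}\in\ZZ^d}$ give $\sum_{\mathbf{k}}\|f\phi_{\mathbf{k}}\|_E^p\eta(\mathbf{k})^p\leq C\int_{\RR^d}\|fT_y\chi\|_E^p\eta(y)^pdy$.

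Combining the two directions proves the norm equivalence for $1\leq p\leq\infty$, and positivity is automatic once equivalence to a norm is known. For $p=\infty$ the same two inequalities, with integrals replaced by suprema, handle $W(E,L^{\infty}_{\eta})$; the refinement to $W(E,L^{\infty}_{\eta,0})$ follows because the localized estimates send tails to tails, so $\{\|f\phi_{\mathbf{k}}\|_E\eta(\mathbf{k})\}_{\mathbf{k}}\in c_0(\ZZ^d)$ is equivalent to $\eta\cdot\|fT_{\cdot}\chi\|_E$ vanishing at infinity, while the continuity of $x\mapsto\|fT_x\chi\|_E$ (already established) places this function in $\mathcal{C}_{\eta,0}(\RR^d)$ rather than merely $L^{\infty}_{\eta,0}(\RR^d)$. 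The hard part is concentrated entirely in constructing the reproducing formula of the second direction and justifying the vector-valued integral with its uniform $\mathcal{F}L^1_{\nu_E}$-bound; once that identity is in hand, the remaining steps are the routine overlap-and-moderateness bookkeeping already used in Lemma \ref{dual-ams-s1}.
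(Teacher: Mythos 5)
Your argument is correct, and while the easy direction (continuous norm dominated by the discrete BUPU norm) and the $c_0$/vanishing-at-infinity bookkeeping coincide with the paper's in all essentials, the crux --- bounding the discrete norm by the integral --- is handled by a genuinely different device. You construct the exact continuous reproducing formula $\phi_{\mathbf{k}}=A^{-1}\int_{\mathbf{k}+\Omega}\bigl(\phi_{\mathbf{k}}\overline{T_y\chi}\bigr)\,T_y\chi\,dy$ with $\Omega=\supp\phi-\supp\chi$ and read it as a Bochner integral in $E$, so that the uniform $\mathcal{F}L^1_{\nu_E}$-bound on the factors $A^{-1}\phi_{\mathbf{k}}\overline{T_y\chi}$ gives $\|f\phi_{\mathbf{k}}\|_E\leq C_0\int_{\mathbf{k}+\Omega}\|fT_y\chi\|_E\,dy$ directly, for an arbitrary $\chi\in\DD(\RR^d)\backslash\{0\}$ with no normalisation. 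The paper instead stays discrete and scalar: after reducing by a change of variables to the case $\chi(0)\neq 0$, it picks $\psi$ with $\psi\chi=1$ near $0$ and a rescaled bump $\tilde{\phi}(x)=\phi(n_0x)$ whose translates over $\frac{1}{n_0}\ZZ^d$ form a partition of unity; for \emph{every} $t\in\mathbf{k}+[-1,1]^d$ this yields the finite-sample bound $\|f\phi_{\mathbf{k}}\|_E\leq C\sum_{\mathbf{r}\in J_{2n_0+1}}\|fT_{t+\mathbf{r}/n_0}\tilde{\phi}\|_E$, which is then integrated in $t$ over the unit cell to produce the continuous norm, and finally $\tilde{\phi}=\tilde{\phi}\psi\chi$ converts $\|fT_x\tilde{\phi}\|_E$ into $\|fT_x\chi\|_E$. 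So your remark that a finite-sample estimate is unavailable is not quite accurate --- it is available for each $t$, and the paper recovers the integral precisely by averaging over $t$; your reproducing formula is the continuous analogue of that averaging. What your route buys is the elimination of the auxiliary functions $\psi$, $\tilde{\phi}$ and of the reduction to $\chi(0)\neq 0$; what it costs is the justification of a vector-valued integral identity in $E$, which you correctly reduce to the continuity of $x\mapsto fT_x\chi$ after localising $f$, though the equality of the Bochner integral with $f\phi_{\mathbf{k}}$ still merits a one-line verification by pairing with $\SSS(\RR^d)$ and using $E\hookrightarrow\SSS'(\RR^d)$.
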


\begin{proof} Let $1\leq p\leq\infty$. Denote by $X_p$ the subspace of $E_{\operatorname{loc}}$ consisting of those elements for which \eqref{nor-int-a} is finite; we denote this quantity by $\|\cdot\|_{X_p}$. Clearly, it is a seminorm on $X_p$ (the fact that it is a norm will follow from the second part of the proof). Without losing generality, we can assume $\chi(0)\neq 0$; otherwise we can always change variables in \eqref{nor-int-a} without changing $X_p$ and the new seminorm is equivalent to the old one (because $\eta$ is a polynomially bounded weight). There is $k_0\in \ZZ_+$ such that $\sum_{\mathbf{r}\in J_{k_0}}\phi_{\mathbf{r}}=1$ on $\supp\chi$. Let $f\in W(E,L^p_{\eta})$. We estimate as follows
\beqs
\|f\|_{X_p}&\leq& \|\chi\|_{\mathcal{F}L^1_{\nu_E}} \left(\int_{\RR^d}\left(\sum_{\mathbf{r}\in J_{k_0}}\|fT_{x+\mathbf{r}}\phi\|_E\right)^p\eta(x)^pdx\right)^{1/p}\\
&\leq& C_1 \sum_{\mathbf{r}\in J_{k_0}} \left(\int_{\RR^d}\|fT_{x+\mathbf{r}}\phi\|_E^p\eta(x)^pdx\right)^{1/p} \leq C_2 \left(\int_{\RR^d}\|fT_x\phi\|_E^p\eta(x)^pdx\right)^{1/p}\\
&\leq& C_2 \left(\sum_{\mathbf{k}\in\ZZ^d}\int_{\mathbf{k}+[-1,1]^d} \|fT_x\phi\|_E^p\eta(x)^pdx\right)^{1/p}.
\eeqs
Let $\mathbf{k}\in\ZZ^d$ and $x\in \mathbf{k}+[-1,1]^d$ be arbitrary but fixed. Then
\beqs
\|f T_x\phi\|_E \leq \sum_{\mathbf{r}\in J_2}\|f\phi_{\mathbf{k}+\mathbf{r}}T_x\phi\|_E \leq \|\phi\|_{\mathcal{F}L^1_{\nu_E}}\sum_{\mathbf{r}\in J_2} \|\phi_{\mathbf{k}+\mathbf{r}}f\|_E.
\eeqs
Consequently,
\beqs
\|f\|_{X_p}\leq C_3\sum_{\mathbf{r}\in J_2} \left(\sum_{\mathbf{k}\in\ZZ^d} \int_{\mathbf{k}+[-1,1]^d} \|\phi_{\mathbf{k}+\mathbf{r}}f\|_E^p \eta(x)^pdx\right)^{1/p}.
\eeqs
We introduce the change of variables $t=x+\mathbf{r}$ in each of the integrals. As $\eta$ is a polynomially bounded weight, we deduce
\beqs
\|f\|_{X_p}\leq C_4 \left(\sum_{\mathbf{k}\in\ZZ^d} \int_{\mathbf{k}+[-1,1]^d} \|\phi_{\mathbf{k}}f\|_E^p \eta(t)^p dt\right)^{1/p}\leq C_5\|f\|_{W(E, L^p_{\eta})}.
\eeqs
We turn our attention to the opposite inclusion. As $\chi(0)\neq 0$, there exists $n_0\in\ZZ_+$ such that $\chi\neq0$ on $[-1/n_0,1/n_0]^d$. Pick $\psi\in\DD(\RR^d)$ such that $\psi\chi=1$ on $[-1/n_0,1/n_0]^d$. Define $\tilde{\phi}(x)=\phi(n_0x)$, $x\in\RR^d$; clearly
\beqs
\supp\tilde{\phi}\subseteq (-1/n_0,1/n_0)^d\quad \mbox{and}\quad \sum_{\mathbf{k}\in\ZZ^d} T_{\mathbf{k}/n_0}\tilde{\phi}(x)=1,\,\, \forall x\in\RR^d.
\eeqs
Let $f\in X_p$. For $\mathbf{k}\in\ZZ^d$ and $t\in\mathbf{k}+[-1,1]^d$, we infer
\beqs
\|f\phi_{\mathbf{k}}\|_E\leq \sum_{\mathbf{r}\in J_{2n_0+1}}\|f\phi_{\mathbf{k}}T_{t+\mathbf{r}/n_0}\tilde{\phi}\|_E\leq \|\phi\|_{\mathcal{F}L^1_{\nu_E}}\sum_{\mathbf{r}\in J_{2n_0+1}}\|fT_{t+\mathbf{r}/n_0}\tilde{\phi}\|_E.
\eeqs
Consequently,
\beqs
\|f\|_{W(E,L^p_{\eta})}\leq C'_1\sum_{\mathbf{r}\in J_{2n_0+1}} \left(\sum_{\mathbf{k}\in\ZZ^d}\int_{\mathbf{k}+[-1,1]^d} \|fT_{t+\mathbf{r}/n_0}\tilde{\phi}\|_E^p\eta(t)^pdt\right)^{1/p}.
\eeqs
Introducing the change of variables $x=t+\mathbf{r}/n_0$ together with the fact that $\eta$ is polynomially bounded weight, we conclude
\beqs
\|f\|_{W(E,L^p_{\eta})}\leq C'_2 \left(\sum_{\mathbf{k}\in\ZZ^d}\int_{\mathbf{k}+[-4,4]^d} \|fT_x\tilde{\phi}\|_E^p\eta(x)^pdx\right)^{1/p}\leq C'_3 \left(\int_{\RR^d} \|fT_x\tilde{\phi}\|_E^p\eta(x)^pdx\right)^{1/p}.
\eeqs
Since $\tilde{\phi}=\tilde{\phi}\psi\chi$, we immediately deduce $\|f\|_{W(E,L^p_{\eta})}\leq C'_3\|\tilde{\phi}\psi\|_{\mathcal{F}L^1_{\nu_E}}\|f\|_{X_p}$. This implies that $\|\cdot\|_{X_p}$ is in fact a norm on $X_p$, and in view of the first part, $X_p=W(E,L^p_{\eta})$ with $\|\cdot\|_{X_p}$ being equivalent to $\|\cdot\|_{W(E,L^p_{\eta})}$.\\
\indent The proof for $W(E,L^{\infty}_{\eta,0})$ is analogous and we omit it; since $x\mapsto \|fT_x\chi\|_E$ is continuous, this function belongs to $L^{\infty}_{\eta,0}(\RR^d)$ if and only if it belongs to $\mathcal{C}_{\eta,0}(\RR^d)$.
\end{proof}

We end this section with the following result which connects the classical modulation spaces with the amalgam spaces having $\mathcal{F}L^p_{\eta}$ as a global component. This result has appeared in other works in the past (cf. \cite{cor-nic,CR,fei90}); we provide a proof for the sake of completeness.

\begin{lemma}\label{ident-mod-amalg}
Let $\eta_1$ and $\eta_2$ be two polynomially bounded weights and $p_1,p_2\in[1,\infty)$. Then $M^{p_1,p_2}_{\eta_1\otimes\eta_2}=\mathcal{F}^{-1}W(\mathcal{F}L^{p_1}_{\eta_1}, L^{p_2}_{\eta_2})$ with equivalent norms.
\end{lemma}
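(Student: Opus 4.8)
The plan is to bring both spaces to the same weighted mixed-norm expression, using the continuous description of the amalgam norm from Lemma \ref{equ-con-cdis-nor} together with the elementary interplay between pointwise multiplication, the Fourier transform, and the STFT. Fix a nonzero $\chi\in\DD(\RR^d)$ to localise the amalgam space and set $g=\overline{\mathcal{F}\chi}$, which is a nonzero element of $\SSS(\RR^d)$; since the modulation norm is independent of the window up to equivalence, I may compute $\|f\|_{M^{p_1,p_2}_{\eta_1\otimes\eta_2}}$ with this particular $g$. Applying Lemma \ref{equ-con-cdis-nor} with local component $E=\mathcal{F}L^{p_1}_{\eta_1}$ and global component $L^{p_2}_{\eta_2}$ gives, up to equivalence,
\[
\|\mathcal{F}f\|_{W(\mathcal{F}L^{p_1}_{\eta_1},L^{p_2}_{\eta_2})}\asymp\left(\int_{\RR^d}\big\|(\mathcal{F}f)\,T_{\xi}\chi\big\|_{\mathcal{F}L^{p_1}_{\eta_1}}^{p_2}\,\eta_2(\xi)^{p_2}\,d\xi\right)^{1/p_2}.
\]

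The heart of the matter is a pointwise identity for the inner factor. Since $\|G\|_{\mathcal{F}L^{p_1}_{\eta_1}}=\|\mathcal{F}^{-1}G\|_{L^{p_1}_{\eta_1}}$ and $\mathcal{F}^{-1}(T_{\xi}\chi)=M_{\xi}\mathcal{F}^{-1}\chi$, one computes
\[
\mathcal{F}^{-1}\big((\mathcal{F}f)\,T_{\xi}\chi\big)=f*(M_{\xi}\mathcal{F}^{-1}\chi),\qquad f*(M_{\xi}\mathcal{F}^{-1}\chi)(t)=e^{2\pi i t\xi}\,V_gf(t,\xi),
\]
the second equality coming from unravelling the convolution and observing that $\mathcal{F}^{-1}\chi(t-s)=\mathcal{F}\chi(s-t)=\overline{g(s-t)}$ for $g=\overline{\mathcal{F}\chi}$. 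Taking absolute values removes the unimodular prefactor, so for every $\xi$,
\[
\big\|(\mathcal{F}f)\,T_{\xi}\chi\big\|_{\mathcal{F}L^{p_1}_{\eta_1}}=\|V_gf(\cdot,\xi)\|_{L^{p_1}_{\eta_1}}.
\]
Inserting this into the previous display turns the right-hand side into $\big(\int(\int|V_gf(x,\xi)|^{p_1}\eta_1(x)^{p_1}\,dx)^{p_2/p_1}\eta_2(\xi)^{p_2}\,d\xi\big)^{1/p_2}=\|f\|_{M^{p_1,p_2}_{\eta_1\otimes\eta_2}}$. This is exactly the asserted norm equivalence, and it forces $f\in M^{p_1,p_2}_{\eta_1\otimes\eta_2}$ if and only if $\mathcal{F}f\in W(\mathcal{F}L^{p_1}_{\eta_1},L^{p_2}_{\eta_2})$.

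The delicate point, and what I expect to be the main obstacle, is the \emph{well-definedness} hidden in the first step: Lemma \ref{equ-con-cdis-nor} presupposes $\mathcal{F}f\in(\mathcal{F}L^{p_1}_{\eta_1})_{\operatorname{loc}}$, that is, that \emph{every} frequency slice $V_gf(\cdot,\xi)$ lies in $L^{p_1}_{\eta_1}$, whereas $f\in M^{p_1,p_2}_{\eta_1\otimes\eta_2}$ a priori only guarantees this for almost every $\xi$. I would settle this by the standard pointwise STFT estimate $|V_gf|\le\|g\|_{L^2}^{-2}\,|V_gf|*|V_gg|$ on $\RR^{2d}$, whose kernel $|V_gg|$ is a Schwartz function: taking the $L^{p_1}_{\eta_1}$ norm in the time variable and invoking the weighted Young inequality there, Minkowski's inequality in the frequency variable, and the moderateness of $\eta_1$, one bounds $\|V_gf(\cdot,\xi)\|_{L^{p_1}_{\eta_1}}$ by the convolution of the finite function $\xi'\mapsto\|V_gf(\cdot,\xi')\|_{L^{p_1}_{\eta_1}}$ against a rapidly decaying kernel, hence it is finite for every $\xi$.

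Alternatively, one can bypass slices altogether. For $p_1,p_2\in[1,\infty)$ both $M^{p_1,p_2}_{\eta_1\otimes\eta_2}$ and $W(\mathcal{F}L^{p_1}_{\eta_1},L^{p_2}_{\eta_2})$ are TMIB spaces (by the examples recalled in Section \ref{tmib-mod-spa} together with Lemma \ref{dual-ams-s1}$(ii)$), so $\SSS(\RR^d)$ is dense in each. The norm identity above is unambiguous on $\SSS(\RR^d)$, where $\mathcal{F}$ is a bijection and all slices are finite; by density and completeness it extends to a topological isomorphism $M^{p_1,p_2}_{\eta_1\otimes\eta_2}\to W(\mathcal{F}L^{p_1}_{\eta_1},L^{p_2}_{\eta_2})$, and this extension must coincide with the Fourier transform because both spaces are continuously included in $\SSS'(\RR^d)$. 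Either way the proof concludes.
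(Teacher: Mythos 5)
Your argument is correct, and its core coincides with the paper's: both proofs rest on the identity $\mathcal{F}^{-1}\bigl((\mathcal{F}f)\,T_{\xi}\chi\bigr)(t)=e^{2\pi i t\xi}V_gf(t,\xi)$ (the paper quotes it from Gr\"ochenig as \eqref{iden-stftb-id}, you rederive it from scratch with $g=\overline{\mathcal{F}\chi}$, and your conjugation bookkeeping is right), which converts the $\xi$-slice norm $\|V_gf(\cdot,\xi)\|_{L^{p_1}_{\eta_1}}$ into $\|(\mathcal{F}f)T_{\xi}\chi\|_{\mathcal{F}L^{p_1}_{\eta_1}}$. Where you genuinely diverge is in how this continuous expression is matched with the amalgam norm: the paper redoes the discretization by hand, splitting $\int_{\RR^d}d\xi$ over the cubes $\mathbf{k}+[-1,1]^d$ and trading $T_{\xi}\phi$ against the BUPU $\{\phi_{\mathbf{k}}\}$ in both directions, whereas you simply invoke Lemma \ref{equ-con-cdis-nor}, which the paper has already established by exactly that kind of computation but, somewhat curiously, does not reuse here. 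This buys a visibly shorter proof and isolates the one real subtlety, namely that Lemma \ref{equ-con-cdis-nor} presupposes $\mathcal{F}f\in(\mathcal{F}L^{p_1}_{\eta_1})_{\operatorname{loc}}$ before the norm can even be formed; both of your fixes are sound (the pointwise domination $|V_gf|\leq\|g\|_{L^2}^{-2}\,|V_gf|*|V_gg|$ making every slice finite, or the density-of-$\SSS(\RR^d)$ argument using that both sides are TMIB spaces continuously included in $\SSS'(\RR^d)$), and the paper quietly settles the same point at the end of its proof by noting that $\phi_{\mathbf{k}}\mathcal{F}f=\mathcal{F}(f*\mathcal{F}^{-1}\phi_{\mathbf{k}})\in\mathcal{F}L^{p_1}_{\eta_1}$ for all $\mathbf{k}$, whence $\mathcal{F}f\in(\mathcal{F}L^{p_1}_{\eta_1})_{\operatorname{loc}}$. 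In short: same key identity, but your routing through Lemma \ref{equ-con-cdis-nor} is a legitimate and cleaner organization of the same estimates.
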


\begin{proof} Set $g=\mathcal{F}^{-1}\phi\in\SSS(\RR^d)\backslash\{0\}$. In the proof, we employ the following identity (see \cite[Lemma 3.1.1, p. 39]{Grochenig}): for any $f\in\SSS'(\RR^d)$ it holds that
\beq\label{iden-stftb-id}
V_gf(x,\xi)=e^{-2\pi i x\xi}\mathcal{F}(\mathcal{F}f\cdot T_{\xi}\phi)(-x),\quad \mbox{for all}\,\, x,\xi\in\RR^d.
\eeq
Let $f\in \mathcal{F}^{-1}W(\mathcal{F}L^{p_1}_{\eta_1}, L^{p_2}_{\eta_2})$. Employing \eqref{iden-stftb-id} and the fact $\mathcal{F}^{-1}L^{p_1}_{\check{\eta}_1}=\mathcal{F}L^{p_1}_{\eta_1}$ with $\|\cdot\|_{\mathcal{F}^{-1}L^{p_1}_{\check{\eta}_1}}= \|\cdot\|_{\mathcal{F}L^{p_1}_{\eta_1}}$, we infer
\beqs
\|f\|_{M^{p_1,p_2}_{\eta_1\otimes\eta_2}}&=&\left(\int_{\RR^d}\|(\mathcal{F}f) (T_{\xi}\phi)\|^{p_2}_{\mathcal{F}L^{p_1}_{\eta_1}}\eta_2(\xi)^{p_2}d\xi\right)^{1/p_2}\\
&\leq& \left(\sum_{\mathbf{k}\in\ZZ^d} \int_{\mathbf{k}+[-1,1]^d}\|(\mathcal{F}f) (T_{\xi}\phi)\|^{p_2}_{\mathcal{F}L^{p_1}_{\eta_1}}\eta_2(\xi)^{p_2}d\xi\right)^{1/p_2}.
\eeqs
Let $\mathbf{k}\in\ZZ^d$ and $\xi\in \mathbf{k}+[-1,1]^d$ be arbitrary but fixed. Then
\beqs
\|(\mathcal{F}f) (T_{\xi}\phi)\|_{\mathcal{F}L^{p_1}_{\eta_1}} \leq \sum_{\mathbf{r}\in J_2}\|(\mathcal{F}f) (T_{\xi}\phi)\phi_{\mathbf{k}+\mathbf{r}}\|_{\mathcal{F}L^{p_1}_{\eta_1}} \leq C_1\sum_{\mathbf{r}\in J_2} \|\phi_{\mathbf{k}+\mathbf{r}}\mathcal{F}f\|_{\mathcal{F}L^{p_1}_{\eta_1}}.
\eeqs
Consequently,
\beqs
\|f\|_{M^{p_1,p_2}_{\eta_1\otimes\eta_2}}\leq C_2\sum_{\mathbf{r}\in J_2} \left(\sum_{\mathbf{k}\in\ZZ^d} \int_{\mathbf{k}+[-1,1]^d} \|\phi_{\mathbf{k}+\mathbf{r}}\mathcal{F}f\|^{p_2}_{\mathcal{F}L^{p_1}_{\eta_1}} \eta_2(\xi)^{p_2}d\xi\right)^{1/p_2}.
\eeqs
We introduce the change of variables $t=\xi+\mathbf{r}$ in each of the integrals. As $\eta_2$ is a polynomially bounded weight, we deduce
\beqs
\|f\|_{M^{p_1,p_2}_{\eta_1\otimes\eta_2}}\leq C_3 \left(\sum_{\mathbf{k}\in\ZZ^d} \int_{\mathbf{k}+[-1,1]^d} \|\phi_{\mathbf{k}}\mathcal{F}f\|^{p_2}_{\mathcal{F}L^{p_1}_{\eta_1}} \eta_2(t)^{p_2}dt\right)^{1/p_2}\leq C_3\|f\|_{\mathcal{F}^{-1}W(\mathcal{F}L^{p_1}_{\eta_1}, L^{p_2}_{\eta_2})}.
\eeqs
Assume now $f\in M^{p_1,p_2}_{\eta_1\otimes\eta_2}$. For $\mathbf{k}\in\ZZ^d$ and $t\in \mathbf{k}+[-1,1]^d$, we infer
\beqs
\|f*\mathcal{F}^{-1}\phi_{\mathbf{k}}\|_{L^{p_1}_{\eta_1}}\leq \sum_{\mathbf{r}\in J_2} \|f*\mathcal{F}^{-1}(\phi_{\mathbf{k}})* (\mathcal{F}^{-1}T_{t+\mathbf{r}}\phi)\|_{L^{p_1}_{\eta_1}} \leq C'_1 \sum_{\mathbf{r}\in J_2} \|f* \mathcal{F}^{-1}T_{t+\mathbf{r}}\phi\|_{L^{p_1}_{\eta_1}}.
\eeqs
Denoting by $\theta$ the characteristic function of $[-1,1]^d$, we deduce
\beqs
I&=&\left(\int_{\RR^d}\left(\sum_{\mathbf{k}\in\ZZ^d} \|f*\mathcal{F}^{-1}\phi_{\mathbf{k}}\|_{L^{p_1}_{\eta_1}} \theta(t-\mathbf{k})\eta_2(t)\right)^{p_2} dt\right)^{1/p_2}\\
&\leq& C'_2\sum_{\mathbf{r}\in J_2}\left(\sum_{\mathbf{k}\in\ZZ^d} \int_{\mathbf{k}+[-1,1]^d} \|f*\mathcal{F}^{-1}T_{t+\mathbf{r}}\phi\|^{p_2}_{L^{p_1}_{\eta_1}} \eta_2(t)^{p_2} dt\right)^{1/p_2}.
\eeqs
Introducing the change of variables $\xi=t+\mathbf{r}$ together with the fact that $\eta_2$ is polynomially bounded weight, we conclude
\beqs
I&\leq& C'_3\left(\sum_{\mathbf{k}\in\ZZ^d} \int_{\mathbf{k}+[-1,1]^d} \|f*\mathcal{F}^{-1}T_{\xi}\phi\|^{p_2}_{L^{p_1}_{\eta_1}} \eta_2(\xi)^{p_2} d\xi\right)^{1/p_2}\\
&\leq& C'_4\left(\int_{\RR^d} \|f*\mathcal{F}^{-1}T_{\xi}\phi\|^{p_2}_{L^{p_1}_{\eta_1}} \eta_2(\xi)^{p_2} d\xi\right)^{1/p_2}.
\eeqs
Because of \eqref{iden-stftb-id}, the very last term is just $\|f\|_{M^{p_1,p_2}_{\eta_1\otimes\eta_2}}$. Furthermore, the above also implies $\phi_{\mathbf{k}}\mathcal{F}f=\mathcal{F}(f*\mathcal{F}^{-1}\phi_{\mathbf{k}})\in \mathcal{F}L^{p_1}_{\eta_1}$, $\forall \mathbf{k}\in\ZZ^d$, which, in turn, yields that $\mathcal{F}f$ is locally in $\mathcal{F}L^{p_1}_{\eta_1}$ (i.e. in $(\mathcal{F}L^{p_1}_{\eta_1})_{\operatorname{loc}}$). As $I$ is just $\|f\|_{\mathcal{F}^{-1}W(\mathcal{F}L^{p_1}_{\eta_1},L^{p_2}_{\eta_2})}$, the proof is complete.
\end{proof}

The case when $p_1=p_2$ in the above lemma has been discussed in great detail in \cite{fei90}. There it has been shown that, under certain conditions on the weights $\eta_1$ and $\eta_2$, the Fourier image of $W(\mathcal{F}L^p_{\eta_1},L^p_{\eta_2})$ is exactly $W(\mathcal{F}L^p_{\eta_2},L^p_{\eta_1})$; this gives the Fourier invariance of the spaces $M^{p,p}$ (cf. \cite[Theorem 11.3.5, p. 236]{Grochenig}).

\section{Modulation spaces associated to a class of completed $\pi$-tensor products of TMIB spaces}\label{spi}

In this section we identify the modulation spaces associated to a large class of completed $\pi$-tensor products of TMIB spaces. In order to better illustrate the key ideas in the main result, we first consider the case $\Modsq{L^p\hat{\otimes}_{\pi} L^p}$, $p\in[1,2]$; of course the claim in the next lemma is a consequence of the main result: Theorem \ref{1.21} (see Corollary \ref{cor12} below).

\begin{lemma}\label{dipiprxx3}
For every $p\in[1,2]$, $\Modsq{L^p\hat{\otimes}_{\pi} L^p}=W(\mathcal{F}L^p,L^1)$ with equivalent norms.
\end{lemma}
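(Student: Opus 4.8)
The plan is to establish the two-sided norm estimate $\|V_gf\|_{L^p\hat{\otimes}_{\pi}L^p}\asymp\|f\|_{W(\mathcal{F}L^p,L^1)}$, fixing $g=\mathcal{F}^{-1}\phi$ as in Lemma \ref{ident-mod-amalg}. By Remark \ref{rem-eq-nor-se-iis} the right-hand side is comparable to $\sum_{\mathbf{k}\in\ZZ^d}\|\phi_{\mathbf{k}}f\|_{\mathcal{F}L^p}$, so everything reduces to controlling this $\ell^1$-sum of local $\mathcal{F}L^p$-norms against the projective tensor norm of $V_gf$, and conversely. I will treat the two inequalities separately: the first is elementary and holds for every $p\in[1,\infty)$, while the restriction $p\le 2$ is needed only for the second.

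For $W(\mathcal{F}L^p,L^1)\hookrightarrow\Modsq{L^p\hat{\otimes}_{\pi}L^p}$ I would write $f=\sum_{\mathbf{k}}\phi_{\mathbf{k}}f$ and use subadditivity of the projective norm, $\|V_gf\|_{\pi}\le\sum_{\mathbf{k}}\|V_g(\phi_{\mathbf{k}}f)\|_{\pi}$; this is exactly the step that produces the $L^1$ (i.e.\ $\ell^1$) global component. It then suffices to prove a translation-uniform local estimate $\|V_gh\|_{L^p\hat{\otimes}_{\pi}L^p}\le C\|h\|_{\mathcal{F}L^p}$ for $h$ supported in $[-1,1]^d$ (both sides are invariant under $h\mapsto T_{\mathbf{k}}h$, so $C$ is uniform in $\mathbf{k}$). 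The key is an \emph{efficient} tensor representation: fixing $\Xi\in\DD((-2,2)^d)$ with $\Xi\equiv1$ on $[-1,1]^d$ and expanding the window in a Fourier series on $[-2,2]^d$, $\overline{g(t-x)}\Xi(t)=\sum_{\mathbf{n}}\tilde{d}_{\mathbf{n}}(x)e^{2\pi i\mathbf{n}t/4}$, one obtains, since $h\Xi=h$,
\[
V_gh(x,\xi)=\int h(t)\overline{g(t-x)}e^{-2\pi it\xi}\,dt=\sum_{\mathbf{n}\in\ZZ^d}\tilde{d}_{\mathbf{n}}(x)\,(T_{\mathbf{n}/4}\hat{h})(\xi),\qquad \hat h=\mathcal{F}h,
\]
a genuine element of $L^p\hat{\otimes}_{\pi}L^p$. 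The whole $h$-dependence now sits in the second factor, with $\|T_{\mathbf{n}/4}\hat{h}\|_{L^p}=\|\hat{h}\|_{L^p}=\|h\|_{\mathcal{F}L^p}$ (reflection invariance of the $L^p$-norm), while $\tilde{d}_{\mathbf{n}}(x)=4^{-d}\int\overline{g(t-x)}\Xi(t)e^{-2\pi i\mathbf{n}t/4}\,dt$ are Fourier coefficients of a compactly supported smooth function convolved with the Schwartz window, so $\sum_{\mathbf{n}}\|\tilde{d}_{\mathbf{n}}\|_{L^p}\le C_g<\infty$ by repeated integration by parts in $t$. Hence $\|V_gh\|_{\pi}\le(\sum_{\mathbf{n}}\|\tilde{d}_{\mathbf{n}}\|_{L^p})\|h\|_{\mathcal{F}L^p}$, as required. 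It is essential to expand the \emph{window} rather than the product $h\,\overline{T_xg}$; the latter choice forces all the $h$-information into the first factor and yields only the weaker bound by $\|h\|_{M^{p,1}}$.

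The reverse inclusion $\Modsq{L^p\hat{\otimes}_{\pi}L^p}\hookrightarrow W(\mathcal{F}L^p,L^1)$ is the main obstacle, and this is where $p\le 2$ must enter. One cannot simply invert: writing $V_gf=\sum_na_n\otimes b_n$ and reconstructing $f=\|g\|^{-2}\sum_n(\mathcal{F}^{-1}b_n)(a_n\ast g)$ via $V_g^{\ast}V_g=\|g\|^2\operatorname{Id}$, a term-by-term estimate of $\sum_{\mathbf{k}}\|\phi_{\mathbf{k}}f\|_{\mathcal{F}L^p}$ is hopeless, since neither $a_n\ast g$ nor $\mathcal{F}^{-1}b_n$ has the $L^1$-type spatial decay needed individually (already for $p=2$ one has $L^2\ast\SSS(\RR^d)\not\subseteq W(\mathcal{F}L^1,L^1)$). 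Instead I would argue by duality. For $1<p\le 2$, Lemma \ref{dual-ams-s1} exhibits $W(\mathcal{F}L^{p'},L^{\infty}_{\eta,0})$ (with $\eta=1$) as a predual of $W(\mathcal{F}L^p,L^1)$, so it suffices to bound $|\langle f,\psi\rangle|$ for $\psi\in\SSS(\RR^d)$ by $C\|V_gf\|_{\pi}\|\psi\|_{W(\mathcal{F}L^{p'},L^{\infty}_{0})}$. Writing $\langle f,\psi\rangle=\|g\|^{-2}\langle V_gf,V_g\psi\rangle$ and using $(L^p\hat{\otimes}_{\pi}L^p)'=\mathcal{B}(L^p,L^p)$ (bounded bilinear forms), this reduces to the injective/$\epsilon$-type estimate
\[
\|V_g\psi\|_{\mathcal{B}(L^p,L^p)}=\sup_{\|a\|_p,\|b\|_p\le1}\Big|\int V_g\psi\,(a\otimes b)\Big|\le C\,\|\psi\|_{W(\mathcal{F}L^{p'},L^{\infty}_{0})}.
\]
Proving this is the crux: localizing $\psi=\sum_{\mathbf{k}}\phi_{\mathbf{k}}\psi$, one must pass from the $L^{\infty}_0$-datum $\sup_{\mathbf{k}}\|\phi_{\mathbf{k}}\psi\|_{\mathcal{F}L^{p'}}$ to the bilinear-form bound \emph{without} the naive $\ell^1$-loss, and the gap is absorbed precisely by the almost-orthogonality of the spatially localized pieces $V_g(\phi_{\mathbf{k}}\psi)$ together with Hausdorff--Young in the $\xi$-variable, where $p'\ge2$ (equivalently $p\le 2$) is used. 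Finally, the endpoint $p=1$ is handled separately and more cheaply, since $L^1\hat{\otimes}_{\pi}L^1=L^1(\RR^{2d})$ gives $\Modsq{L^1\hat{\otimes}_{\pi}L^1}=M^{1,1}=\mathcal{F}M^{1,1}=W(\mathcal{F}L^1,L^1)$ by the Fourier invariance of Feichtinger's algebra.
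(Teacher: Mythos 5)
Your first inclusion, $W(\mathcal{F}L^p,L^1)\hookrightarrow\Modsq{L^p\hat{\otimes}_{\pi}L^p}$, is correct and genuinely different from the paper's: you bound $\|V_g(\phi_{\mathbf{k}}f)\|_{\pi}$ directly by expanding the truncated window in a Fourier series, so that $V_gh=\sum_{\mathbf{n}}\tilde{d}_{\mathbf{n}}\otimes T_{\mathbf{n}/4}\hat{h}$ is an explicit absolutely convergent tensor representation; the paper instead synthesises $f=V_g^*\tilde{f}$ from the atoms $(T_{\mathbf{k}}g)\otimes\mathcal{F}(f\phi_{\mathbf{k}}T_{\mathbf{k}}\psi)$, where $T_{\mathbf{k}}\psi$ and $(T_{\mathbf{k}}g)*g$ split $\phi_{\mathbf{k}}$, and then invokes the characterisation of $\Modsq{X}$ via $V_g^*$. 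Both are valid, both work for all $p\in[1,\infty)$, and your version is self-contained modulo the routine justification that the $\ell^1$-convergent sum of the $V_g(\phi_{\mathbf{k}}f)$ in $L^p\hat{\otimes}_{\pi}L^p$ coincides with $V_gf$ in $\SSS'(\RR^{2d})$.

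The converse inclusion is where your proposal has a genuine gap, and it stems from a misdiagnosis. You declare the term-by-term estimate of $f=\sum_j\lambda_j(\mathcal{F}^{-1}\psi_j)(\varphi_j*g)$ "hopeless" because neither factor lies in a $W(\cdot,L^1)$ space individually. But that is not what is needed: the paper estimates each \emph{product} in $W(\mathcal{F}L^p,L^1)$ by applying H\"older's inequality to the $\ell^1$-sum of local norms,
\begin{equation*}
\sum_{\mathbf{k}}\|\phi_{\mathbf{k}}(\mathcal{F}^{-1}\psi_j)(\varphi_j*g)\|_{\mathcal{F}L^p}\lesssim \Bigl(\sum_{\mathbf{k}}\|\phi_{\mathbf{k}}\mathcal{F}^{-1}\psi_j\|_{\mathcal{F}L^p}^q\Bigr)^{1/q} \Bigl(\sum_{\mathbf{k}}\|\phi_{\mathbf{k}}(\varphi_j*g)\|_{\mathcal{F}L^1}^p\Bigr)^{1/p},
\end{equation*}
then uses the convolution theorem for amalgams to get $\|\varphi_j*g\|_{W(\mathcal{F}L^1,L^p)}\lesssim\|\varphi_j\|_{L^p}$ and the Hausdorff--Young-type embedding $\mathcal{F}L^p\subseteq W(\mathcal{F}L^p,L^q)$ --- valid exactly for $p\in[1,2]$ --- to get $\|\mathcal{F}^{-1}\psi_j\|_{W(\mathcal{F}L^p,L^q)}\lesssim\|\psi_j\|_{L^p}$. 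This is a complete, two-line-per-term argument, and it is precisely where $p\leq 2$ enters. Your alternative duality route is sound in outline (the predual identification via Lemma \ref{dual-ams-s1} and $(L^p\hat{\otimes}_{\pi}L^p)'=\mathcal{B}(L^p,L^p)$ are both legitimate), but everything hinges on the unproved estimate $\|V_g\psi\|_{\mathcal{B}(L^p,L^p)}\lesssim\|\psi\|_{W(\mathcal{F}L^{p'},L^{\infty}_0)}$, for which you offer only an appeal to "almost-orthogonality" and "Hausdorff--Young". That is not the operative mechanism. Writing $\int V_g\psi\,(a\otimes b)=\langle\psi,(a*\overline{\check{g}})\,\hat{b}\,\rangle$, the estimate follows from $(a*\overline{\check{g}})\hat{b}\in W(\mathcal{F}L^p,L^1)$ with norm $\lesssim\|a\|_{L^p}\|b\|_{L^p}$ --- which is proved by exactly the H\"older-splitting computation above. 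So the duality detour does not bypass the "hopeless" step; it reproduces it in dual form, and as written your proof of this inclusion is incomplete.
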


\begin{proof} First we prove the inclusion $\Modsq{L^p\hat{\otimes}_{\pi} L^p}\subseteq W(\mathcal{F}L^p,L^1)$. Every element of $L^p\hat{\otimes}_{\pi} L^p$ can be represented as an absolutely convergent series
\beq\label{rep-f-fun-inssss}
\sum_{j=1}^{\infty} \lambda_j\varphi_j\otimes \psi_j
\eeq
where $\varphi_j,\psi_j\in\SSS(\RR^d)$, $\forall j\in \ZZ_+$, $\{\varphi_j\}_{j\in\ZZ_+}$ and $\{\psi_j\}_{j\in\ZZ_+}$ are bounded subsets of $L^p(\RR^d)$ and $\lambda_j\in\CC$, $j\in\ZZ_+$, is such that $\sum_j|\lambda_j|<\infty$ (see \cite[Theorem 6.4, p. 94]{Sch} and \cite[Theorem 6, p. 188]{kothe2}). Pick $g\in\SSS(\RR^d)\backslash\{0\}$ satisfying $\|g\|_{L^2}=1$. Let $f\in \Modsq{L^p\hat{\otimes}_{\pi} L^p}$ and thus $V_g f\in L^p\hat{\otimes}_{\pi} L^p$. Consequently $V_gf$ has the form \eqref{rep-f-fun-inssss}. The key ingredient is that
\begin{equation} \label{tensact1}
V_g^*(\varphi_j\otimes\psi_j)=(\mathcal{F}^{-1}\psi_j)(\varphi_j*g),\quad \forall j\in\ZZ_+
\end{equation} 
(cf. \cite[Equation (5.1)]{dppv-3}), which, in turn, implies
\beq\label{ser-for-funrr}
f=V_g^*V_gf=\sum_{j=1}^{\infty}\lambda_j V_g^*(\varphi_j\otimes\psi_j)=\sum_{j=1}^{\infty}\lambda_j (\mathcal{F}^{-1}\psi_j)(\varphi_j*g).
\eeq
Now, \cite[Theorem 3]{feich} implies
\beqs
\|\varphi_j*g\|_{W(\mathcal{F}L^1,L^p)}\leq C_1\|\varphi_j\|_{W(L^1,L^p)}\|g\|_{W(\mathcal{F}L^1,L^1)}\leq C'_1 \|\varphi_j\|_{L^p}\|g\|_{W(\mathcal{F}L^1,L^1)},\quad \forall j\in\ZZ_+.
\eeqs
Denoting by $q$ the H\"older conjugate to $p$ and employing Remark \ref{rem-eq-nor-se-iis}, we infer
\beq
\|(\mathcal{F}^{-1}\psi_j)(\varphi_j*g)\|_{W(\mathcal{F}L^p,L^1)}
&\leq& C \sum_{\mathbf{k}\in\ZZ^d} \|\phi_{\mathbf{k}}(\mathcal{F}^{-1}\psi_j)(\varphi_j*g)\|_{\mathcal{F}L^p} \nonumber\\
&\leq& C\sum_{\mathbf{r}\in J_1}\sum_{\mathbf{k}\in\ZZ^d} \|\phi_{\mathbf{k}}(\mathcal{F}^{-1}\psi_j)\|_{\mathcal{F}L^p} \|\phi_{\mathbf{k}+\mathbf{r}}(\varphi_j*g)\|_{\mathcal{F}L^1} \label{est-hh-for-gen-ca}\\
&\leq& C_2 \|\mathcal{F}^{-1}\psi_j\|_{W(\mathcal{F}L^p,L^q)} \|\varphi_j*g\|_{W(\mathcal{F}L^1,L^p)}. \nonumber
\eeq
As $p\in[1,2]$, we have the continuous inclusion $L^p=W(L^p,L^p)\subseteq W(\mathcal{F}L^q,L^p)$ (see \cite[Lemma 1.2 $iv)$]{fe81}) and consequently \cite[Theorem 3.2]{fe81} gives the continuous inclusion $\mathcal{F}L^p\subseteq W(\mathcal{F}L^p,L^q)$. Since $\|\cdot\|_{\mathcal{F}L^p}=\|\cdot\|_{\mathcal{F}^{-1}L^p}$, we deduce
\beqs
\|(\mathcal{F}^{-1}\psi_j)(\varphi_j*g)\|_{W(\mathcal{F}L^p,L^1)}\leq C_3 \|\psi_j\|_{L^p} \|\varphi_j\|_{L^p}\|g\|_{W(\mathcal{F}L^1,L^1)},\quad \forall j\in\ZZ_+.
\eeqs
Consequently, the series in \eqref{ser-for-funrr} is absolutely summable in $W(\mathcal{F}L^p,L^1)$ and thus $f\in W(\mathcal{F}L^p,L^1)$.\\
\indent We turn our attention to the opposite inclusion. Remark \ref{rem-eq-nor-se-iis} implies that $f\in W(\mathcal{F}L^p,L^1)$ is characterised by the finiteness of the expression
\beqs
\sum_{\mathbf{k}\in\ZZ^d}\|f\phi_{\mathbf{k}}\|_{\mathcal{F}L^p}<\infty.
\eeqs
The key idea is that we can split $\phi_{\mathbf{k}}$ into two parts so as $f\phi_{\mathbf{k}}$ is of the same form as the summands in \eqref{ser-for-funrr}. To make this precise, pick $g\in\DD(\RR^d)$ such that $0\leq g\leq 1$ and $g=1$ on $[-2,2]^d$. Then $g*g(x)\geq 1$, $\forall x\in[-1,1]^d$. Take nonnegative $\psi\in\DD(\RR^d)$ such that $\psi=1/(g*g)$ on $[-1,1]^d$. Then
\beqs
f\phi_{\mathbf{k}}=f\phi_{\mathbf{k}}(T_{\mathbf{k}}\psi)((T_{\mathbf{k}}g)*g),\quad \forall \mathbf{k}\in\ZZ^d.
\eeqs
Moreover,
\beqs
\|f\phi_{\mathbf{k}}T_{\mathbf{k}}\psi\|_{\mathcal{F}L^p}\leq \|f\phi_{\mathbf{k}}\|_{\mathcal{F}L^p} \|T_{\mathbf{k}}\psi\|_{\mathcal{F}L^1} =\|f\phi_{\mathbf{k}}\|_{\mathcal{F}L^p} \|\psi\|_{\mathcal{F}L^1},\quad \forall \mathbf{k}\in\ZZ^d.
\eeqs
As $\|\cdot\|_{\mathcal{F}^{-1}L^p}=\|\cdot\|_{\mathcal{F}L^p}$, we deduce $\sum_{\mathbf{k}\in\ZZ^d} (T_{\mathbf{k}}g)\otimes \mathcal{F}(f\phi_{\mathbf{k}}T_{\mathbf{k}}\psi)$ is absolutely summable in $L^p(\RR^d)\hat{\otimes}_{\pi} L^p(\RR^d)$ to some $\tilde{f}\in L^p(\RR^d)\hat{\otimes}_{\pi} L^p(\RR^d)$ and (cf. Remark \ref{rem-eq-nor-se-iis})
\beqs
\|\tilde{f}\|_{L^p(\RR^d)\hat{\otimes}_{\pi} L^p(\RR^d)}\leq C\|\psi\|_{\mathcal{F}L^1}\|g\|_{L^p(\RR^d)}\|f\|_{W(\mathcal{F}L^p,L^1)}.
\eeqs
Since
\beqs
V_g^*\tilde{f}=\sum_{\mathbf{k}\in\ZZ^d} f\phi_{\mathbf{k}}(T_{\mathbf{k}}\psi)((T_{\mathbf{k}}g)*g)=\sum_{\mathbf{k}\in\ZZ^d} f\phi_{\mathbf{k}}=f,
\eeqs
\cite[Proposition 4.4]{dppv-3} verifies that $f\in \Modsq{L^p\hat{\otimes}_{\pi} L^p}$ and the inclusion $W(\mathcal{F}L^p,L^1)\subseteq \Modsq{L^p\hat{\otimes}_{\pi} L^p}$ is continuous. Now the open mapping theorem yields that the norms $\|\cdot\|_{\Modsq{L^p\hat{\otimes}_{\pi} L^p}}$ and $\|\cdot\|_{W(\mathcal{F}L^p,L^1)}$ are equivalent.
\end{proof}

Our goal is to prove a general version of Lemma \ref{dipiprxx3} where each of the spaces $L^p$ is replaced by a TMIB space. The inequality \eqref{est-hh-for-gen-ca} hints that the local and global behaviour of $V_gf$ in each of the variables $x$ and $\xi$ ``come separately''. This suggests to look at tensor products of amalgam spaces as they naturally keep track of the local a global behaviour separately. Another observation from the above proof, albeit not obvious at a glance, is that the local component of $W(\mathcal{F}L^p,L^1)$ came from the second $L^p$ in $L^p\hat{\otimes}_{\pi} L^p$. The significance of the first $L^p$ was in its global growth; aside from the fact $\|T_x\|_{\mathcal{L}_b(L^p(\RR^d))}=1$, the local behaviour did not really mattered. This will become more transparent in the general result (Theorem \ref{1.21}) and its proof.

\subsection{The general case}\label{sub-sec-pi-prod}

The goal of this section is to identify the modulation spaces associated to completed $\pi$-tensor products of a large class of amalgam spaces constructed out of TMIB spaces. The main result is the following.

\begin{theorem}\label{1.21}
Let $E$ and $F$ be two TMIB spaces on $\RR^d$ and let $\eta_1$ and $\eta_2$ be polynomially bounded weights on $\RR^d$. Assume that $\omega_F(x)=1$, $\forall x\in\RR^d$. Then $W(E,L^1_{\eta_1\eta_2})$ is equal to all of the following spaces with equivalent norms:
\begin{itemize}
\item[$(i)$] $\Modsq{W(F,L^{p_1}_{\eta_1})\hat{\otimes}_{\pi} \mathcal{F}W(E,L^{p_2}_{\eta_2})}$, with $p_1,p_2\in[1,\infty)$ satisfying $p^{-1}_1+p^{-1}_2\geq 1$;
\item[$(ii)$] $\Modsq{W(F,L^{\infty}_{\eta_1,0})\hat{\otimes}_{\pi} \mathcal{F}W(E,L^1_{\eta_2})}$;
\item[$(iii)$] $\Modsq{W(F,L^1_{\eta_1})\hat{\otimes}_{\pi} \mathcal{F}W(E,L^{\infty}_{\eta_2,0})}$.
\end{itemize}
\end{theorem}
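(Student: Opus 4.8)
The plan is to generalise the proof of Lemma~\ref{dipiprxx3}, replacing the two explicit Lebesgue spaces by amalgam spaces and keeping careful track of how the STFT intertwines the two tensor factors. Throughout I fix $g\in\SSS(\RR^d)\setminus\{0\}$ with $\|g\|_{L^2}=1$, so that $f=V_g^*V_gf$ on $\SSS'(\RR^d)$, and I rely on the identity $V_g^*(\varphi\otimes\psi)=(\mathcal F^{-1}\psi)(\varphi*g)$ of \eqref{tensact1}. Write $X$ for the $\pi$-tensor product occurring in $(i)$ (the cases $(ii)$, $(iii)$ being the endpoint versions). The roles of the two factors are exactly as in Lemma~\ref{dipiprxx3}: an element $\psi$ of the second factor $\mathcal F W(E,L^{p_2}_{\eta_2})$ contributes the multiplier $\mathcal F^{-1}\psi\in W(E,L^{p_2}_{\eta_2})$, which carries the local component $E$ of the answer, while an element $\varphi$ of the first factor $W(F,L^{p_1}_{\eta_1})$ contributes the convolution factor $\varphi*g$, which governs the global, $L^1$ behaviour. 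I prove each identification by establishing two continuous inclusions between $\Modsq{X}$ and $W(E,L^1_{\eta_1\eta_2})$ and then invoking the open mapping theorem, as at the end of the proof of Lemma~\ref{dipiprxx3}.

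Two amalgam-space estimates drive the argument. The first is a \emph{multiplication} estimate: using that $E$ is a Banach module over $\mathcal F L^1_{\nu_E}$ (local factor) together with H\"older's inequality on the global sequence norms of Remark~\ref{rem-eq-nor-se-iis}, one obtains, for $p_1^{-1}+p_2^{-1}\ge1$,
\[
W(\mathcal F L^1_{\nu_E},L^{p_1}_{\eta_1})\cdot W(E,L^{p_2}_{\eta_2})\hookrightarrow W(E,L^{r}_{\eta_1\eta_2})\hookrightarrow W(E,L^1_{\eta_1\eta_2}),\qquad r^{-1}=p_1^{-1}+p_2^{-1},
\]
the last inclusion because $r\le1$ forces $\ell^{r}\subseteq\ell^1$. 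The second, and genuinely delicate, estimate is a \emph{convolution} bound: the map $\varphi\mapsto\varphi*g$ is bounded from $W(F,L^{p_1}_{\eta_1})$ into $W(\mathcal F L^1_{\nu_E},L^{p_1}_{\eta_1})$. This is the analogue of the use of \cite[Theorem~3]{feich} in Lemma~\ref{dipiprxx3}, and it is where $\omega_F\equiv1$ is essential. Granting both, the inclusion $\Modsq{X}\subseteq W(E,L^1_{\eta_1\eta_2})$ is immediate: for $f\in\Modsq{X}$ I expand $V_gf=\sum_j\lambda_j\varphi_j\otimes\psi_j$ (absolutely convergent, $\varphi_j,\psi_j\in\SSS(\RR^d)$ bounded, $\sum_j|\lambda_j|<\infty$), so that $f=\sum_j\lambda_j(\mathcal F^{-1}\psi_j)(\varphi_j*g)$, and the two estimates bound each summand in $W(E,L^1_{\eta_1\eta_2})$ uniformly, making the series absolutely convergent there.

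For the reverse inclusion I mimic the second half of the proof of Lemma~\ref{dipiprxx3}. Choosing $g\in\DD(\RR^d)$ with $g=1$ on $[-2,2]^d$ and $\psi\in\DD(\RR^d)$ with $\psi=1/(g*g)$ on $[-1,1]^d$, I have the localisation $f\phi_{\mathbf k}=f\phi_{\mathbf k}(T_{\mathbf k}\psi)\big((T_{\mathbf k}g)*g\big)$, which suggests the candidate preimage $\tilde f=\sum_{\mathbf k\in\ZZ^d}(T_{\mathbf k}g)\otimes\mathcal F\big(f\phi_{\mathbf k}T_{\mathbf k}\psi\big)$ satisfying $V_g^*\tilde f=f$. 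Two bookkeeping facts, both using $\omega_F\equiv1$, then give $\|T_{\mathbf k}g\|_{W(F,L^{p_1}_{\eta_1})}\asymp\eta_1(\mathbf k)$ and (since translations are isometries on $\mathcal F L^1_{\nu_E}$) $\|\mathcal F(f\phi_{\mathbf k}T_{\mathbf k}\psi)\|_{\mathcal F W(E,L^{p_2}_{\eta_2})}\lesssim\eta_2(\mathbf k)\|f\phi_{\mathbf k}\|_E$; hence the $\pi$-tensor norm of $\tilde f$ is dominated by $\sum_{\mathbf k}\eta_1(\mathbf k)\eta_2(\mathbf k)\|f\phi_{\mathbf k}\|_E\asymp\|f\|_{W(E,L^1_{\eta_1\eta_2})}$ by Remark~\ref{rem-eq-nor-se-iis}. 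Thus $\tilde f\in X$ and \cite[Proposition~4.4]{dppv-3} yields $f=V_g^*\tilde f\in\Modsq{X}$ continuously. The endpoint cases $(ii)$ and $(iii)$ run identically with $(p_1,p_2)=(\infty,1)$ and $(1,\infty)$: the vanishing-at-infinity spaces $L^\infty_{\eta,0}$ are used precisely so that $X$ remains a TMIB space (density of $\SSS$), while the decisive H\"older step only needs $\ell^\infty\cdot\ell^1\subseteq\ell^1$.

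The main obstacle is the convolution estimate $\varphi\mapsto\varphi*g\colon W(F,L^{p_1}_{\eta_1})\to W(\mathcal F L^1_{\nu_E},L^{p_1}_{\eta_1})$, whose content is that convolution with the fixed Schwartz function $g$ \emph{upgrades} the abstract local component $F$ to the concrete Wiener algebra $\mathcal F L^1_{\nu_E}$ that multiplies $E$. I would prove it by localising $\varphi=\sum_{\mathbf m}\varphi\phi_{\mathbf m}$ and estimating $\big((\varphi\phi_{\mathbf m})*g\big)\phi_{\mathbf k}$: writing $\partial^\alpha\big((\varphi\phi_{\mathbf m})*g\big)(x)=\langle\varphi\phi_{\mathbf m},T_x h_\alpha\rangle$ with $h_\alpha(\,\cdot\,)=(\partial^\alpha g)(-\,\cdot\,)\in\SSS(\RR^d)$, and using $\SSS(\RR^d)\hookrightarrow F'$ together with $\omega_{F'}=\check\omega_F\equiv1$, every translate $T_x h_\alpha$ has uniformly bounded $F'$-norm, so finitely many derivatives of $(\varphi\phi_{\mathbf m})*g$ are controlled by $\|\varphi\phi_{\mathbf m}\|_F$ with Schwartz decay $(1+|\mathbf k-\mathbf m|)^{-L}$ in the distance between the localisation cells, for every $L$. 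Since $\nu_E$ is polynomially bounded, a fixed number of derivatives suffices for the local embedding $C^N\hookrightarrow\mathcal F L^1_{\nu_E}$ on a bounded cube, and a discrete weighted Young inequality in the cell index, absorbing the rapid decay against the polynomial growth of $\eta_1$, finishes the bound. It is exactly here that $\omega_F\equiv1$ cannot be dispensed with, as it is what makes the translated test functions $T_x h_\alpha$ uniformly bounded in $F'$.
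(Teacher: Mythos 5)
Your proposal is correct and follows essentially the same route as the paper: the same expansion $f=\sum_j\lambda_j(\mathcal F^{-1}\psi_j)(\varphi_j*g)$ combined with the module estimate and the convolution bound $\varphi\mapsto\varphi*g\colon W(F,L^{p}_{\eta_1})\to W(\mathcal FL^1_{\nu_E},L^{p}_{\eta_1})$ for the inclusion $\Modsq{X}\subseteq W(E,L^1_{\eta_1\eta_2})$ (the paper proves that bound with the $(1-\Delta)^m$ device and the pairing of Remark~\ref{rem-f-con-map}, which is exactly where $\omega_F\equiv 1$ enters, as you predict), and the same synthesis-operator construction $\tilde f=\sum_{\mathbf k}(\text{bump near }\mathbf k)\otimes\mathcal F(f\phi_{\mathbf k}\cdots)$ with \cite[Proposition 4.4]{dppv-3} for the reverse inclusion (Lemma~\ref{lemma-for-inc1} uses a normalized $\varphi_{\mathbf k}$ and $\chi=\varphi*g$ in place of your $T_{\mathbf k}g$, $T_{\mathbf k}\psi$ pair, a cosmetic difference). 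The one sentence to tighten is the claim that uniform boundedness of $\|T_xh_\alpha\|_{F'}$ already yields the decay $(1+|\mathbf k-\mathbf m|)^{-L}$: by itself it does not, and you must first localize $T_xh_\alpha$ to the cell carrying $\varphi\phi_{\mathbf m}$ so that its Schwartz seminorms, hence its $F'$-norm, decay rapidly in $|x-\mathbf m|$ (the paper obtains the same off-diagonal decay by instead localizing $g$ into $\sum_{\mathbf m}\phi_{\mathbf m}g$, exploiting the support constraint $\mathbf n+\mathbf m\in\mathbf k+[-4,4]^d$ and the summability of $\|\phi_{\mathbf m}\partial^{\gamma}g\|_{\check F'}(1+|\mathbf m|)^{\tau}$).
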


\begin{remark}\label{rem-for-eq-norm-tra}
The condition $\omega_F(x)=1$, $\forall x\in\RR^d$, can be replaced with: $\omega_F$ bounded from above. When the latter holds true, Remark \ref{equiv-norm-for-bou-gr-f} implies that one can always find an equivalent norm on $F$ such that, with respect to this new norm, $\omega_F(x)=1$, $\forall x\in\RR^d$.
\end{remark}

For the proof of Theorem \ref{1.21}, we need the following preliminary result; its proof is a natural generalisation of the second part of the proof of Lemma \ref{dipiprxx3}.

\begin{lemma}\label{lemma-for-inc1}
Let $E$, $F$, $\eta_1$ and $\eta_2$ be as in Theorem \ref{1.21}. Then $W(E,L^1_{\eta_1\eta_2})$ is continuously included into the modulation space $\Modsq{W(F,L^1_{\eta_1})\hat{\otimes}_{\pi} \mathcal{F}W(E,L^1_{\eta_2})}$.
\end{lemma}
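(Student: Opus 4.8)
The plan is to mirror the second half of the proof of Lemma \ref{dipiprxx3}, the only genuinely new feature being the bookkeeping of the two weights $\eta_1$ and $\eta_2$ across the two tensor slots. I would fix as STFT window a function $g\in\DD(\RR^d)$ with $0\leq g\leq 1$ and $g=1$ on $[-2,2]^d$, so that $g*g\geq 1$ on $[-1,1]^d$, and pick a nonnegative $\psi\in\DD(\RR^d)$ with $\psi=1/(g*g)$ on $[-1,1]^d$; then $\phi_{\mathbf{k}}\cdot(T_{\mathbf{k}}\psi)\cdot((T_{\mathbf{k}}g)*g)=\phi_{\mathbf{k}}$ for every $\mathbf{k}\in\ZZ^d$, exactly as before. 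Since different windows yield equivalent norms on the modulation space, this choice is harmless. For $f\in W(E,L^1_{\eta_1\eta_2})$, Remark \ref{rem-eq-nor-se-iis} says precisely that $\sum_{\mathbf{k}\in\ZZ^d}\|f\phi_{\mathbf{k}}\|_E\,\eta_1(\mathbf{k})\eta_2(\mathbf{k})<\infty$, which will be the quantity controlling everything.

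First I would form the candidate element
$$T=\sum_{\mathbf{k}\in\ZZ^d}(T_{\mathbf{k}}g)\otimes\mathcal{F}(f\phi_{\mathbf{k}}T_{\mathbf{k}}\psi)$$
and show the series is absolutely summable in $W(F,L^1_{\eta_1})\hat{\otimes}_{\pi}\mathcal{F}W(E,L^1_{\eta_2})$. This splits into two localized estimates. For the first factor, the hypothesis $\omega_F(x)=1$ makes each $T_{\mathbf{k}}$ an isometry on $F$, so $\|(T_{\mathbf{k}}g)\phi_{\mathbf{j}}\|_F=\|g\phi_{\mathbf{j}-\mathbf{k}}\|_F$; as $g$ has fixed compact support only finitely many $\mathbf{j}$ contribute, and moderateness of $\eta_1$ then gives $\|T_{\mathbf{k}}g\|_{W(F,L^1_{\eta_1})}\leq C\eta_1(\mathbf{k})\|g\|_{W(F,L^1)}$ \emph{uniformly} in $\mathbf{k}$. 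For the second factor I would use $\|\mathcal{F}(f\phi_{\mathbf{k}}T_{\mathbf{k}}\psi)\|_{\mathcal{F}W(E,L^1_{\eta_2})}=\|f\phi_{\mathbf{k}}T_{\mathbf{k}}\psi\|_{W(E,L^1_{\eta_2})}$, the fact that $f\phi_{\mathbf{k}}T_{\mathbf{k}}\psi$ is supported in $\mathbf{k}+(-1,1)^d$, and the multiplication module estimate $\|f\phi_{\mathbf{k}}T_{\mathbf{k}}\psi\phi_{\mathbf{j}}\|_E\leq\|f\phi_{\mathbf{k}}\|_E\,\|T_{\mathbf{k}}\psi\phi_{\mathbf{j}}\|_{\mathcal{F}L^1_{\nu_E}}$, where $\|T_{\mathbf{k}}\psi\phi_{\mathbf{j}}\|_{\mathcal{F}L^1_{\nu_E}}=\|\psi\phi_{\mathbf{j}-\mathbf{k}}\|_{\mathcal{F}L^1_{\nu_E}}$ is uniformly bounded (translation is isometric on $\mathcal{F}L^1_{\nu_E}$). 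This yields $\|f\phi_{\mathbf{k}}T_{\mathbf{k}}\psi\|_{W(E,L^1_{\eta_2})}\leq C\eta_2(\mathbf{k})\|f\phi_{\mathbf{k}}\|_E$. Multiplying the two factor bounds, the $\pi$-norm of $T$ is dominated by $C\|g\|_{W(F,L^1)}\sum_{\mathbf{k}}\eta_1(\mathbf{k})\eta_2(\mathbf{k})\|f\phi_{\mathbf{k}}\|_E$, which by Remark \ref{rem-eq-nor-se-iis} is finite and comparable to $\|f\|_{W(E,L^1_{\eta_1\eta_2})}$.

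Next I would identify $V_g^*T$ with $f$. Applying \eqref{tensact1} termwise gives $V_g^*\big((T_{\mathbf{k}}g)\otimes\mathcal{F}(f\phi_{\mathbf{k}}T_{\mathbf{k}}\psi)\big)=(f\phi_{\mathbf{k}}T_{\mathbf{k}}\psi)\,((T_{\mathbf{k}}g)*g)=f\phi_{\mathbf{k}}$, so that $V_g^*T=\sum_{\mathbf{k}}f\phi_{\mathbf{k}}=f$; the interchange of $V_g^*$ with the sum is justified by its continuity and the absolute summability of $T$. Since $W(F,L^1_{\eta_1})\hat{\otimes}_{\pi}\mathcal{F}W(E,L^1_{\eta_2})$ is a TMIB space (both factors are TMIB by Lemma \ref{dual-ams-s1}$(ii)$ and the Fourier-invariance of the TMIB class, and $\pi$-products of TMIB spaces are TMIB by Lemma \ref{wap} and the discussion following it), \cite[Proposition 4.4]{dppv-3} applies: from $V_g^*T=f$ with $T$ in the tensor product we conclude $f\in\Modsq{W(F,L^1_{\eta_1})\hat{\otimes}_{\pi}\mathcal{F}W(E,L^1_{\eta_2})}$ together with a norm bound $\|f\|_{\Modsq{W(F,L^1_{\eta_1})\hat{\otimes}_{\pi}\mathcal{F}W(E,L^1_{\eta_2})}}\leq C\|T\|_{\pi}\leq C'\|f\|_{W(E,L^1_{\eta_1\eta_2})}$, which is exactly the asserted continuous inclusion.

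The main obstacle is the weight bookkeeping together with the correct allocation of roles to the two tensor slots. The \emph{position} of the localized bump $T_{\mathbf{k}}g$ in the first slot must carry precisely the weight $\eta_1(\mathbf{k})$, and this is where $\omega_F=1$ is indispensable: it guarantees that the $F$-norm of the translated bump does not grow with $\mathbf{k}$, so that the position contributes only through $\eta_1$. Meanwhile the localized Fourier block $\mathcal{F}(f\phi_{\mathbf{k}}T_{\mathbf{k}}\psi)$ in the second slot must carry $\eta_2(\mathbf{k})$, so that the product of the two factor-norms reproduces the single global weight $\eta_1(\mathbf{k})\eta_2(\mathbf{k})$ of $W(E,L^1_{\eta_1\eta_2})$. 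Once this separation is engineered correctly, the two remaining estimates are routine consequences of the moderateness of the weights and of the convolution and multiplication module structures of $E$ and $F$.
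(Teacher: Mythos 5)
Your proposal is correct and follows essentially the same route as the paper: decompose $f=\sum_{\mathbf{k}}f\phi_{\mathbf{k}}$, lift each term to a simple tensor whose first-slot norm carries $\eta_1(\mathbf{k})$ (via $\omega_F=1$ and moderateness) and whose second-slot norm carries $\eta_2(\mathbf{k})\|f\phi_{\mathbf{k}}\|_E$, then apply $V_g^*$ and \cite[Proposition 4.4]{dppv-3}. The only (cosmetic) difference is that the paper places a normalized mollifier $T_{\mathbf{k}}\varphi$ in the first slot and the uncorrected block $\mathcal{F}(f\phi_{\mathbf{k}})$ in the second, using $\varphi*g=1$ near $\supp\phi_{\mathbf{k}}$, whereas you keep the correction $\psi=1/(g*g)$ in the second slot exactly as in Lemma \ref{dipiprxx3}.
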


\begin{proof} Take $g\in\DD(\RR^d)$ such that $0\leq g\leq 1$ and $g=1$ on $[-2,2]^d$. Pick nonnegative $\varphi\in\DD((-1,1)^d)$ such that $\int_{\RR^d}\varphi(x)dx=1$, and define $\chi=\varphi*g\in\DD(\RR^d)$. Then $\chi$ is nonnegative and $\chi=1$ on $[-1,1]^d$ (and thus on $\supp\phi$ as well). Denote $\varphi_{\mathbf{k}}=T_{\mathbf{k}}\varphi\in\DD(\RR^d)$, $\mathbf{k}\in\ZZ^d$.\\
\indent Let $f\in W(E,L^1_{\eta_1\eta_2})$ and denote $\psi_{\mathbf{k}}=\mathcal{F}(f\phi_{\mathbf{k}})\in \mathcal{F}E$, $\mathbf{k}\in\ZZ^d$. Notice that
\beqs
\|f\phi_{\mathbf{k}}\phi_{\mathbf{m}}\|_{E}\leq \|f\phi_{\mathbf{k}}\|_{E}\|\phi_{\mathbf{m}}\|_{\mathcal{F}L^1_{\nu_E}}= \|f\phi_{\mathbf{k}}\|_{E} \|\phi\|_{\mathcal{F}L^1_{\nu_E}}
\eeqs
and thus
\beqs
\int_{\RR^d}\sum_{\mathbf{r}\in J_1}\|f\phi_{\mathbf{k}}\phi_{\mathbf{k}+\mathbf{r}}\|_E \theta_{[-1,1]^d}(t-\mathbf{k}-\mathbf{r})\eta_2(t)dt\leq C'\eta_2(\mathbf{k})\|f\phi_{\mathbf{k}}\|_{E}\|\phi\|_{\mathcal{F}L^1_{\nu_E}}.
\eeqs
We deduce $\psi_{\mathbf{k}}\in \mathcal{F}W(E,L^1_{\eta_2})$ with
\beqs
\|\psi_{\mathbf{k}}\|_{\mathcal{F}W(E,L^1_{\eta_2})}\leq C'\eta_2(\mathbf{k})\|f\phi_{\mathbf{k}}\|_{E}\|\phi\|_{\mathcal{F}L^1_{\nu_E}} ,\quad \forall \mathbf{k}\in\ZZ^d.
\eeqs
Notice that
\beqs
\|\varphi_{\mathbf{k}}\|_{W(F,L^1_{\eta_1})}&=& \int_{\RR^d}\sum_{\mathbf{m}\in\ZZ^d}\|T_{\mathbf{k}}(\varphi \phi_{\mathbf{m}-\mathbf{k}})\|_F\theta_{[-1,1]^d}(t-\mathbf{m})\eta_1(t)dt\\
&\leq& \int_{\RR^d}\sum_{\mathbf{m}\in\ZZ^d}\|\varphi \phi_{\mathbf{m}}\|_F\theta_{[-1,1]^d}(t-\mathbf{m}-\mathbf{k})\eta_1(t)dt\\
&\leq& C''\eta_1(\mathbf{k})\int_{\RR^d}\sum_{\mathbf{m}\in\ZZ^d}\|\varphi \phi_{\mathbf{m}}\|_F\theta_{[-1,1]^d}(t-\mathbf{m})(1+|t|)^{\tau}dt\\
&=& C''\eta_1(\mathbf{k})\|\varphi\|_{W(F,L^1_{(1+|\cdot|)^{\tau}})}.
\eeqs
The above estimates imply that the series $\sum_{\mathbf{k}\in\ZZ^d} \varphi_{\mathbf{k}}\otimes\psi_{\mathbf{k}}$ is absolutely summable in $W(F,L^1_{\eta_1})\hat{\otimes}_{\pi} \mathcal{F}W(E,L^1_{\eta_2})$ to some $\tilde{f}\in W(F,L^1_{\eta_1})\hat{\otimes}_{\pi} \mathcal{F}W(E,L^1_{\eta_2})$. Indeed,
\begin{multline*}
\sum_{\mathbf{k}\in\ZZ^d} \|\varphi_{\mathbf{k}}\otimes\psi_{\mathbf{k}}\|_{W(F,L^1_{\eta_1})\hat{\otimes}_{\pi} \mathcal{F}W(E,L^1_{\eta_2})}\\
\leq C_1\int_{\RR^d}\sum_{\mathbf{k}\in\ZZ^d} \|f\phi_{\mathbf{k}}\|_{E}\theta_{[-1,1]^d}(t-\mathbf{k}) \eta_1(\mathbf{k})\eta_2(\mathbf{k})dt\leq C_2\|f\|_{W(E,L^1_{\eta_1\eta_2})}.
\end{multline*}
Now, as $\phi_{\mathbf{k}}T_{\mathbf{k}}\chi=\phi_{\mathbf{k}}$, $\forall \mathbf{k}\in\ZZ^d$, we infer
\beqs
V^*_g(\tilde{f})=\sum_{\mathbf{k}\in\ZZ^d}V^*_g(\varphi_{\mathbf{k}}\otimes \psi_{\mathbf{k}})=\sum_{\mathbf{k}\in\ZZ^d} (\mathcal{F}^{-1}\psi_{\mathbf{k}})(\varphi_{\mathbf{k}}*g)=\sum_{\mathbf{k}\in\ZZ^d} f\phi_{\mathbf{k}}T_{\mathbf{k}}\chi=\sum_{\mathbf{k}\in\ZZ^d} f\phi_{\mathbf{k}}=f,
\eeqs
which, in view of \cite[Proposition 4.4]{dppv-3}, concludes the proof of the lemma.
\end{proof}

\begin{proof}[Proof of Theorem \ref{1.21}] Because of Lemma \ref{lemma-for-inc1} and \eqref{inc-s-fr}, the following continuous and dense inclusions hold true for all $1\leq p_1\leq p'_1<\infty$ and $1\leq p_2\leq p'_2<\infty$:
\begin{gather*}
W(E,L^1_{\eta_1\eta_2})\hookrightarrow \Modsq{W(F,L^{p_1}_{\eta_1})\hat{\otimes}_{\pi} \mathcal{F}W(E,L^{p_2}_{\eta_2})}\hookrightarrow \Modsq{W(F,L^{p'_1}_{\eta_1})\hat{\otimes}_{\pi} \mathcal{F}W(E,L^{p'_2}_{\eta_2})},\\
W(E,L^1_{\eta_1\eta_2})\hookrightarrow \Modsq{W(F,L^{p_1}_{\eta_1})\hat{\otimes}_{\pi} \mathcal{F}W(E,L^1_{\eta_2})}\hookrightarrow \Modsq{W(F,L^{\infty}_{\eta_1,0})\hat{\otimes}_{\pi} \mathcal{F}W(E,L^1_{\eta_2})},\\
W(E,L^1_{\eta_1\eta_2})\hookrightarrow \Modsq{W(F,L^1_{\eta_1})\hat{\otimes}_{\pi} \mathcal{F}W(E,L^{p_2}_{\eta_2})}\hookrightarrow \Modsq{W(F,L^1_{\eta_1})\hat{\otimes}_{\pi} \mathcal{F}W(E,L^{\infty}_{\eta_2,0})}.
\end{gather*}
Thus, it suffices to prove that $X\subseteq W(E,L^1_{\eta_1\eta_2})$ where $X$ is either the space in $(ii)$ or the space in $(iii)$ or $\Modsq{W(F,L^p_{\eta_1})\hat{\otimes}_{\pi} \mathcal{F}W(E,L^q_{\eta_2})}$ with $p\in(1,\infty)$ and $q$ its H\"older conjugate, since then the equivalence of the norms will follow from the open mapping theorem. We prove $\Modsq{W(F,L^p_{\eta_1})\hat{\otimes}_{\pi} \mathcal{F}W(E,L^q_{\eta_2})}\subseteq W(E,L^1_{\eta_1\eta_2})$, with $p,q\in(1,\infty)$ being H\"older conjugate as the proof when $X$ is either one of the spaces in $(ii)$ or $(iii)$ is analogous.\\
\indent Let $g\in\SSS(\RR^d)\backslash\{0\}$ be such that $\|g\|_{L^2(\RR^d)}=1$. Let $f\in\Modsq{W(F,L^p_{\eta_1})\hat{\otimes}_{\pi} \mathcal{F}W(E,L^q_{\eta_2})}$. Thus (see \cite[Theorem 6.4, p. 94]{Sch} and \cite[Theorem 6, p. 188]{kothe2})
\beqs
V_gf=\sum_{j=1}^{\infty} \lambda_j \varphi_j\otimes\psi_j
\eeqs
where $\varphi_j,\psi_j\in\SSS(\RR^d)$, $j\in\ZZ_+$, are such that $\{\varphi_j\}_{j\in\ZZ_+}$ and $\{\psi_j\}_{j\in\ZZ_+}$ are bounded subsets of $W(F,L^p_{\eta_1})$ and $\mathcal{F}W(E,L^q_{\eta_2})$ respectively; furthermore $\lambda_j$ are complex numbers such that $\sum_j|\lambda_j|<\infty$. We infer
\beq\label{sum-for-the-spp1}
f=V^*_gV_gf=\sum_{j=1}^{\infty} \lambda_j (\mathcal{F}^{-1}\psi_j)(\varphi_j*g),
\eeq
where the series is absolutely summable in $\Modsq{W(F,L^p_{\eta_1})\hat{\otimes}_{\pi} \mathcal{F}W(E,L^q_{\eta_2})}$. Pick $\chi\in\DD(\RR^d)$ such that $0\leq \chi\leq 1$, $\chi=1$ on $\supp\phi$ and $\supp\chi\subseteq [-2,2]^d$. Denote $\chi_{\mathbf{k}}=T_{\mathbf{k}}\chi$. As $\phi_{\mathbf{k}}\chi_{\mathbf{k}}=\phi_{\mathbf{k}}$, we infer
\beq\label{est-f-col1}
\|(\mathcal{F}^{-1}\psi_j)(\varphi_j*g)\phi_{\mathbf{k}}\|_{E}\leq \|(\mathcal{F}^{-1}\psi_j)\phi_{\mathbf{k}}\|_{E} \|(\varphi_j*g)\chi_{\mathbf{k}}\|_{\mathcal{F}L^1_{\nu_E}}.
\eeq
Let $m\in\ZZ_+$ be such that $(1+4\pi^2|\cdot|^2)^{-m}\nu_E\in L^1(\RR^d)$. We have
\beq
\|(\varphi_j*g)\chi_{\mathbf{k}}\|_{\mathcal{F}L^1_{\nu_E}}&=& \left\|\mathcal{F}^{-1}\left((1-\Delta)^m((\varphi_j*g)\chi_{\mathbf{k}})\right) (1+4\pi^2|\cdot|^2)^{-m}\nu_E\right\|_{L^1(\RR^d)}\nonumber \\
&\leq& \tilde{C}\sum_{|\alpha|\leq m}\sum_{\beta\leq 2\alpha}\frac{m!}{(m-|\alpha|)!\alpha!}{2\alpha\choose\beta} \left\|\mathcal{F}^{-1}\left((\varphi_j*\partial^{2\alpha-\beta}g) \partial^{\beta}\chi_{\mathbf{k}}\right)\right\|_{L^{\infty}(\RR^d)}\nonumber \\
&\leq& \tilde{C}\sum_{|\alpha|\leq m}\sum_{\beta\leq 2\alpha}\frac{m!}{(m-|\alpha|)!\alpha!}{2\alpha\choose\beta} \|(\varphi_j*\partial^{2\alpha-\beta}g) \partial^{\beta}\chi_{\mathbf{k}}\|_{L^1(\RR^d)}.\label{est-f-ter1}
\eeq
Observe that
\beqs
\|(\varphi_j*\partial^{2\alpha-\beta}g) \partial^{\beta}\chi_{\mathbf{k}}\|_{L^1(\RR^d)} \leq \sum_{\mathbf{m}\in\ZZ^d}\sum_{\mathbf{n}\in\ZZ^d} \left\|\left((\varphi_j\phi_{\mathbf{n}})*(\phi_{\mathbf{m}}\partial^{2\alpha-\beta}g)\right) \partial^{\beta}\chi_{\mathbf{k}}\right\|_{L^1(\RR^d)}.
\eeqs
Notice that $\supp((\varphi_j\phi_{\mathbf{n}})* (\phi_{\mathbf{m}}\partial^{2\alpha-\beta}g)) \subseteq \mathbf{n}+\mathbf{m}+[-2,2]^d$. Thus, if $\supp((\varphi_j\phi_{\mathbf{n}})* (\phi_{\mathbf{m}}\partial^{2\alpha-\beta}g))\cap \supp\chi_{\mathbf{k}}\neq \emptyset$ then $\mathbf{n}+\mathbf{m}\in \mathbf{k}+[-4,4]^d$. Since $\omega_F(x)=1$, $\forall x\in\RR^d$, Remark \ref{rem-f-con-map} yields
\beqs
\|(\varphi_j*\partial^{2\alpha-\beta}g) \partial^{\beta}\chi_{\mathbf{k}}\|_{L^1(\RR^d)}&\leq& \|\partial^{\beta}\chi\|_{L^1(\RR^d)} \sum_{\mathbf{m}\in\ZZ^d}\sum_{\mathbf{r}\in J_4} \left\|(\varphi_j\phi_{\mathbf{k}-\mathbf{m}+\mathbf{r}})* (\phi_{\mathbf{m}} \partial^{2\alpha-\beta}g)\right\|_{L^{\infty}(\RR^d)}\\
&\leq& \|\partial^{\beta}\chi\|_{L^1(\RR^d)} \sum_{\mathbf{m}\in\ZZ^d}\sum_{\mathbf{r}\in J_4} \|\varphi_j\phi_{\mathbf{k}-\mathbf{m}+\mathbf{r}}\|_F \|\phi_{\mathbf{m}}\partial^{2\alpha-\beta}g\|_{\check{F}'}.
\eeqs
Plugging this in \eqref{est-f-ter1} and employing Minkowski inequality, we infer
\begin{align*}
&\left(\sum_{\mathbf{k}\in\ZZ^d} \|(\varphi_j*g)\chi_{\mathbf{k}}\|_{\mathcal{F}L^1_{\nu_E}}^p \eta_1(\mathbf{k})^p\right)^{1/p}\\
&{}\quad\leq C_1\sum_{|\alpha|\leq m}\sum_{\beta\leq 2\alpha}\sum_{\mathbf{r}\in J_4} \left(\sum_{\mathbf{k}\in\ZZ^d}\left(\sum_{\mathbf{m}\in\ZZ^d} \|\varphi_j\phi_{\mathbf{k}-\mathbf{m}+\mathbf{r}}\|_F \eta_1(\mathbf{k}-\mathbf{m}+\mathbf{r}) \|\phi_{\mathbf{m}}\partial^{2\alpha-\beta}g\|_{\check{F}'} (1+|\mathbf{m}|)^{\tau}\right)^p \right)^{1/p}\\
&{}\quad\leq 9^dC_1\left(\sum_{\mathbf{n}\in\ZZ^d} \|\varphi_j\phi_{\mathbf{n}}\|_F^p\eta_1(\mathbf{n})^p\right)^{1/p} \sum_{|\alpha|\leq m} \sum_{\beta\leq 2\alpha}\sum_{\mathbf{m}\in\ZZ^d}\|\phi_{\mathbf{m}} \partial^{2\alpha-\beta}g\|_{\check{F}'}(1+|\mathbf{m}|)^{\tau}.
\end{align*}
By Remark \ref{rem-eq-nor-se-iis}, $(\sum_{\mathbf{n}\in\ZZ^d} \|\varphi_j\phi_{\mathbf{n}}\|_F^p\eta_1(\mathbf{n})^p)^{1/p} \leq C\|\varphi_j\|_{W(F,L^p_{\eta_1})}$. Since $\check{F}'$ is the strong dual of the TMIB space $\check{F}$, \eqref{inc-s-fr} implies that $\partial^{\gamma}g\in W(\check{F}',L^1_{(1+|\cdot|)^{\tau}})$, $\forall \gamma\in\NN^d$, which, in view of Remark \ref{rem-eq-nor-se-iis}, yields
\beqs
\sum_{\mathbf{m}\in\ZZ^d}\|\phi_{\mathbf{m}} \partial^{\gamma}g\|_{\check{F}'}(1+|\mathbf{m}|)^{\tau}<\infty,\quad \forall\gamma\in\NN^d.
\eeqs
Thus, \eqref{est-f-col1} gives
\begin{align*}
\|&(\mathcal{F}^{-1}\psi_j)(\varphi_j*g)\|_{W(E,L^1_{\eta_1\eta_2})}\\
&\leq C'_1\left(\sum_{\mathbf{k}\in\ZZ^d}\|(\mathcal{F}^{-1}\psi_j)\phi_{\mathbf{k}}\|_E^q \eta_2(\mathbf{k})^q\right)^{1/q}\left(\sum_{\mathbf{k}\in\ZZ^d} \|(\varphi_j*g)\chi_{\mathbf{k}}\|_{\mathcal{F}L^1_{\nu_E}}^p \eta_1(\mathbf{k})^p\right)^{1/p}\\
&\leq C'_2\left(\sum_{\mathbf{k}\in\ZZ^d}\|(\mathcal{F}^{-1}\psi_j)\phi_{\mathbf{k}}\|_E^q \eta_2(\mathbf{k})^q\right)^{1/q}\|\varphi_j\|_{W(F,L^p_{\eta_1})}.
\end{align*}
In view of Remark \ref{rem-eq-nor-se-iis}, this implies the series in \eqref{sum-for-the-spp1} is absolutely summable in $W(E,L^1_{\eta_1\eta_2})$ and the proof is complete.
\end{proof}

In Section \ref{sec-example} we discuss a number of interesting consequences of Theorem \ref{1.21}. Although this is obvious from the statement of Theorem \ref{1.21}, it is important to emphasise the following: the space $F$ (i.e. the local component of the first amalgam space) has no effect on the resulting modulation space; only the global behaviour of the first amalgam space matters.

\section{Modulation spaces associated to a class of completed $\epsilon$-tensor products of TMIB spaces}\label{sepsilon}

We now consider the modulation spaces associated to completed $\epsilon$-tensor products of a class of amalgam spaces. The main result of the section is the following; it is in the same spirit as Theorem \ref{1.21}.

\begin{theorem}\label{meinproposition1}
Let $E$ and $F$ be two TMIB spaces on $\RR^d$ and let $\eta_1$ and $\eta_2$ be polynomially bounded weights on $\RR^d$. Assume that $\omega_F(x)=1$, $\forall x\in\RR^d$. Then $W(E,L^{\infty}_{\eta_1\eta_2,0})$ is equal to all of the following spaces with equivalent norms:
\begin{itemize}
\item[$(i)$] $\Modsq{W(F,L^{p_1}_{\eta_1})\hat{\otimes}_{\epsilon} \mathcal{F}W(E,L^{p_2}_{\eta_2})}$, with $p_1^{-1}+p_2^{-1}\leq 1$, $p_1,p_2\in(1,\infty)$;
\item[$(ii)$] $\Modsq{W(F,L^{\infty}_{\eta_1,0})\hat{\otimes}_{\epsilon} \mathcal{F}W(E,L^{p_2}_{\eta_2})}$, with $p_2\in[1,\infty)$;
\item[$(iii)$] $\Modsq{W(F,L^{p_1}_{\eta_1})\hat{\otimes}_{\epsilon} \mathcal{F}W(E,L^{\infty}_{\eta_2,0})}$, with $p_1\in[1,\infty)$;
\item[$(iv)$] $\Modsq{W(F,L^{\infty}_{\eta_1,0})\hat{\otimes}_{\epsilon} \mathcal{F}W(E,L^{\infty}_{\eta_2,0})}$.
\end{itemize}
\end{theorem}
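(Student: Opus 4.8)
The plan is to follow the architecture of the proof of Theorem \ref{1.21} as far as the injective structure permits. As there, one first records the continuous dense inclusions produced by Lemma \ref{dual-ams-s1}$(i)$ together with the functoriality of the tensor norm in each integrability index (if $A\hookrightarrow A'$ and $B\hookrightarrow B'$ then $A\hat{\otimes}_{\epsilon}B\hookrightarrow A'\hat{\otimes}_{\epsilon}B'$), which squeezes all four spaces $(i)$--$(iv)$ between a smallest and a largest one; the largest is the case $(iv)$ space $W(F,L^{\infty}_{\eta_1,0})\hat{\otimes}_{\epsilon}\mathcal{F}W(E,L^{\infty}_{\eta_2,0})$, since $L^{p}_{\eta}\hookrightarrow L^{\infty}_{\eta,0}$. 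It therefore suffices to prove the two set-theoretic inclusions $W(E,L^{\infty}_{\eta_1\eta_2,0})\subseteq\Modsq{X}$ (for $X$ small) and $\Modsq{X}\subseteq W(E,L^{\infty}_{\eta_1\eta_2,0})$ (for $X$ large), the equivalence of norms then coming from the open mapping theorem exactly as in the final line of the proof of Lemma \ref{dipiprxx3}.

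For the inclusion $W(E,L^{\infty}_{\eta_1\eta_2,0})\hookrightarrow\Modsq{X}$ I would prove the verbatim analogue of Lemma \ref{lemma-for-inc1}. Given $f\in W(E,L^{\infty}_{\eta_1\eta_2,0})$ one forms the same $\varphi_{\mathbf{k}}=T_{\mathbf{k}}\varphi$ and $\psi_{\mathbf{k}}=\mathcal{F}(f\phi_{\mathbf{k}})$, $\mathbf{k}\in\ZZ^d$, and sets $\tilde f=\sum_{\mathbf{k}\in\ZZ^d}\varphi_{\mathbf{k}}\otimes\psi_{\mathbf{k}}$. The one genuine change is the mode of convergence: by Remark \ref{rem-eq-nor-se-iis} the coefficient sequence $\{\|f\phi_{\mathbf{k}}\|_{E}\eta_1(\mathbf{k})\eta_2(\mathbf{k})\}_{\mathbf{k}}$ now lies in $c_0(\ZZ^d)$ rather than $\ell^1(\ZZ^d)$, and this is precisely the decay that forces $\epsilon$-convergence. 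Indeed, since the $\varphi_{\mathbf{k}}$ have uniformly bounded overlap and the pairing of the global $L^{\infty}_{\eta_1,0}$-component against its $L^1$-type dual is essentially block-diagonal, a Hölder estimate on the block pairings shows that the $\epsilon$-norm of a tail $\sum_{|\mathbf{k}|>N}\varphi_{\mathbf{k}}\otimes\psi_{\mathbf{k}}$ is dominated by $\sup_{|\mathbf{k}|>N}\|f\phi_{\mathbf{k}}\|_{E}\eta_1(\mathbf{k})\eta_2(\mathbf{k})\to0$. Thus $\tilde f\in X$, the identity $V^*_g\tilde f=\sum_{\mathbf{k}}f\phi_{\mathbf{k}}T_{\mathbf{k}}\chi=f$ goes through unchanged, and \cite[Proposition 4.4]{dppv-3} yields $f\in\Modsq{X}$ with continuity.

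The reverse inclusion $\Modsq{X}\subseteq W(E,L^{\infty}_{\eta_1\eta_2,0})$ is the main obstacle, and here the method must leave Theorem \ref{1.21} behind: an element of an injective tensor product has no absolutely convergent expansion of the shape \eqref{sum-for-the-spp1}, so the termwise estimate used in that proof is simply not available. The natural replacement is duality. From the duality theory of modulation spaces in \cite{dppv-3} one has $(\Modsq{X})'_b=\Modsq{X'_b}$, while Lemma \ref{dual-ams-s1}$(iii)$ exhibits $W(E,L^{\infty}_{\eta_1\eta_2,0})$ as the canonical predual of $W(E',L^1_{1/(\eta_1\eta_2)})$. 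The idea is to identify $X'_b$ with the projective tensor product of the dual amalgam spaces, $X'_b=W(F',L^{q_1}_{1/\eta_1})\hat{\otimes}_{\pi}\mathcal{F}W(E',L^{q_2}_{1/\eta_2})$, and then feed this into Theorem \ref{1.21} to obtain $\Modsq{X'_b}=W(E',L^1_{1/(\eta_1\eta_2)})$. The Hölder hypothesis $p_1^{-1}+p_2^{-1}\leq1$ is exactly what becomes $q_1^{-1}+q_2^{-1}=2-(p_1^{-1}+p_2^{-1})\geq1$ for the conjugate exponents, so the hypotheses of Theorem \ref{1.21}$(i)$ are met after dualising. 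Once $(\Modsq{X})'_b=W(E',L^1_{1/(\eta_1\eta_2)})$ is established, one recovers $\Modsq{X}=W(E,L^{\infty}_{\eta_1\eta_2,0})$ by pinning down the predual: using the retraction $V^*_gV_g=\operatorname{Id}$ one checks that the canonical map of $\Modsq{X}$ into its bidual lands in the distinguished $c_0$-copy $W(E,L^{\infty}_{\eta_1\eta_2,0})$ and that $\SSS(\RR^d)$ is dense there.

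The hard point — and the place where the injective rather than projective nature is decisive — is the tensor duality $X'_b=W(F',L^{q_1}_{1/\eta_1})\hat{\otimes}_{\pi}\mathcal{F}W(E',L^{q_2}_{1/\eta_2})$. In general the dual of an $\epsilon$-product is only the space of integral bilinear forms, which collapses to the $\pi$-product of the duals under an approximation-property-plus-Radon--Nikodym hypothesis. The approximation property is free from Lemma \ref{wap}, the $L^1$-type global component of the dual factors supplies the Radon--Nikodym side (the model being $(c_0\hat{\otimes}_{\epsilon}B)'=\ell^1\hat{\otimes}_{\pi}B'$, valid because $\ell^1$ has the Radon--Nikodym property), and the Fourier side needs only $(\mathcal{F}W(E,L^{p_2}_{\eta_2}))'_b=\mathcal{F}W(E',L^{q_2}_{1/\eta_2})$ with matching weights. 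Making this duality rigorous for the concrete amalgam factors, and then transferring the equality of dual spaces back to the spaces themselves, is where the bulk of the work sits. A route that sidesteps the Radon--Nikodym discussion is to run the forward inclusion directly from the supremum definition of the $\epsilon$-norm, testing $V_gf$ against rank-one functionals $a'\otimes b'$ with $a'$ concentrated near $y_{\mathbf{k}}$: this recovers $\|f\phi_{\mathbf{k}}\|_{E}\eta_1(\mathbf{k})\eta_2(\mathbf{k})$ up to a tail of the $\epsilon$-norm and, crucially, forces it to \emph{vanish} at infinity, which is precisely the $L^{\infty}_{\eta,0}$ (i.e.\ $c_0$) conclusion rather than mere $\ell^{\infty}$-boundedness.
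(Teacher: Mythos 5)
Your reduction step and your forward inclusion $W(E,L^{\infty}_{\eta_1\eta_2,0})\hookrightarrow\Modsq{X}$ are essentially the paper's: the paper proves a summability lemma for the family $\{(T_{\mathbf{k}}\varphi)\otimes(\phi_{\mathbf{k}}e)\}$ in the $\epsilon$-products (Lemma \ref{lem-sum-con1}), using exactly the H\"older-on-blocks pairing against equicontinuous sets of rank-one functionals that you describe, and then applies $V^*_g$ as in Lemma \ref{lemma-for-inc1}. One small correction: the four families are not totally ordered, so there is no single ``smallest'' $X$; one must run the summability argument separately for the minimal cases ((i) with conjugate exponents, (ii) with $p_2=1$, (iii) with $p_1=1$), though each instance is the same computation. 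There \emph{is} a single largest space, namely (iv), so reducing the reverse inclusion to that case is correct.

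The reverse inclusion is where your proposal has a genuine gap. Your main route rests on the identification $\bigl(W(F,L^{p_1}_{\eta_1})\hat{\otimes}_{\epsilon} \mathcal{F}W(E,L^{p_2}_{\eta_2})\bigr)'_b= W(F',L^{q_1}_{1/\eta_1})\hat{\otimes}_{\pi} \mathcal{F}W(E',L^{q_2}_{1/\eta_2})$, and this is both unproven and doubtful in the stated generality: the dual of an $\epsilon$-product is the space of integral forms, which collapses to the $\pi$-product of the duals only under a Radon--Nikod\'ym hypothesis on the dual factors. Here the dual factors have local component $E'$ or $F'$ with $E,F$ \emph{arbitrary} TMIB spaces; taking $F=L^1$ gives $F'=L^{\infty}$, which fails RNP and embeds into $W(F',L^{q_1}_{1/\eta_1})$, so the ``$\ell^1$ has RNP'' model does not transfer to the amalgam. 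Even granting the duality, you cannot feed the resulting $\pi$-product into Theorem \ref{1.21}: that theorem (and its engine, the representation of elements of the $\pi$-product as absolutely convergent series $\sum_j\lambda_j\varphi_j\otimes\psi_j$ with $\varphi_j,\psi_j\in\SSS(\RR^d)$) requires the factors to be TMIB spaces, whereas $W(F',L^{q_1}_{1/\eta_1})$ and $\mathcal{F}W(E',L^{q_2}_{1/\eta_2})$ are only DTMIB spaces in which $\SSS(\RR^d)$ need not be dense. The predual-recovery step can in principle be made rigorous (dense continuous inclusion with isomorphic adjoint forces surjectivity), but it is moot if the dual identification is unavailable.

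The parenthetical alternative in your last sentence is in substance what the paper actually does, and it is worth seeing why it closes the argument without any duality of tensor products. For $u\in\SSS(\RR^d)\otimes\SSS(\RR^d)$ the paper writes $\langle e',\phi_{\mathbf{k}}V^*_gu\rangle$ as the action of the rank-one functional $\phi_{\mathbf{k}+\mathbf{r}}\otimes(\phi_{\mathbf{k}}e')$ on $\tilde{g}(\chi_{\mathbf{k}+\mathbf{r}}\otimes 1)(\operatorname{Id}\otimes\mathcal{F}^{-1})u$; Lemma \ref{dual-ams-s1}$(iii)$ gives $\|\phi_{\mathbf{k}+\mathbf{r}}\|_{W(F,L^{\infty}_{\eta_1,0})'_b}\lesssim 1/\eta_1(\mathbf{k})$ and $\|\phi_{\mathbf{k}}e'\|_{W(E,L^{\infty}_{\eta_2,0})'_b}\lesssim\|e'\|_{E'}/\eta_2(\mathbf{k})$, the multiplier $\tilde{g}(\chi_{\mathbf{k}+\mathbf{r}}\otimes 1)$ is bounded in $\mathcal{F}L^1_{\nu_X}$ uniformly in $\mathbf{k}$, and the $\epsilon$-norm is by definition the supremum over such rank-one functionals. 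This yields only the uniform bound $\eta_1(\mathbf{k})\eta_2(\mathbf{k})\|\phi_{\mathbf{k}}V^*_gu\|_E\leq C\|u\|$; contrary to what you assert, the testing does \emph{not} by itself force vanishing at infinity. The $c_0$ conclusion comes from density: $V^*_gu\in\SSS(\RR^d)\subseteq W(E,L^{\infty}_{\eta_1\eta_2,0})$ for $u$ in the dense subspace $\SSS(\RR^d)\otimes\SSS(\RR^d)$, and the bounded extension of $V^*_g$ to the completed $\epsilon$-product therefore lands in the closed subspace $W(E,L^{\infty}_{\eta_1\eta_2,0})$ of $W(E,L^{\infty}_{\eta_1\eta_2})$. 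If you develop this direct estimate and supply that density argument, you recover the paper's proof; as written, your primary route does not go through.
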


\begin{remark}
For the same reasons as in  Remark \ref{rem-for-eq-norm-tra}, the condition $\omega_F(x)=1$, $\forall x\in\RR^d$, can be replaced with: $\omega_F$ bounded from above.
\end{remark}

For the proof of Theorem \ref{meinproposition1}, we need the following two results.

\begin{lemma}\label{lem-sum-con1}
Let $E$, $F$, $\eta_1$ and $\eta_2$ be as in Theorem \ref{meinproposition1}. Then for any $\varphi\in\DD(\RR^d)$ and $e\in W(E,L^{\infty}_{\eta_1\eta_2,0})$, the family
\beq\label{famil-sum-c1}
\{(T_{\mathbf{k}}\varphi)\otimes (\phi_{\mathbf{k}}e)|\, \mathbf{k}\in\ZZ^d\}
\eeq
is summable in all of the following spaces
\begin{itemize}
\item[$(i)$] $W(F,L^{p_1}_{\eta_1})\hat{\otimes}_{\epsilon}W(E,L^{p_2}_{\eta_2})$, with $p_1^{-1}+p_2^{-1}\leq 1$, $p_1,p_2\in(1,\infty)$;
\item[$(ii)$] $W(F,L^{\infty}_{\eta_1,0})\hat{\otimes}_{\epsilon}W(E,L^{p_2}_{\eta_2})$, with $p_2\in[1,\infty)$;
\item[$(iii)$] $W(F,L^{p_1}_{\eta_1})\hat{\otimes}_{\epsilon}W(E,L^{\infty}_{\eta_2,0})$, with $p_1\in[1,\infty)$;
\item[$(iv)$] $W(F,L^{\infty}_{\eta_1,0})\hat{\otimes}_{\epsilon} W(E,L^{\infty}_{\eta_2,0})$.
\end{itemize}
\end{lemma}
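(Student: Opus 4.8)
The plan is to verify summability through the Cauchy criterion for families in the Banach space $X\hat{\otimes}_{\epsilon}Y$, where $X$ and $Y$ denote the two amalgam factors in each of $(i)$--$(iv)$. Concretely, the family $\{(T_{\mathbf{k}}\varphi)\otimes(\phi_{\mathbf{k}}e)\}_{\mathbf{k}\in\ZZ^d}$ is summable exactly when, for every $\varepsilon>0$, there is a finite $F_0\subseteq\ZZ^d$ with $\|\sum_{\mathbf{k}\in G}(T_{\mathbf{k}}\varphi)\otimes(\phi_{\mathbf{k}}e)\|_{\epsilon}<\varepsilon$ for all finite $G$ disjoint from $F_0$ (completeness of the injective tensor product turns the Cauchy criterion into summability). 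First I would rewrite the injective norm of such a finite partial sum through the defining formula $\|u\|_{\epsilon}=\sup\{|\langle u,x'\otimes y'\rangle|:x'\in B_{X'},\,y'\in B_{Y'}\}$, so the quantity to estimate is $\sup_{x',y'}\big|\sum_{\mathbf{k}\in G}\langle T_{\mathbf{k}}\varphi,x'\rangle\langle\phi_{\mathbf{k}}e,y'\rangle\big|$. By Lemma \ref{dual-ams-s1}$(iii)$ the dual spaces are, with equivalent norms, $X'=W(F',L^{q_1}_{1/\eta_1})$ and $Y'=W(E',L^{q_2}_{1/\eta_2})$, where $(p_i,q_i)$ are H\"older conjugate and an $L^{\infty}_{\eta_i,0}$ global component is paired with an $L^1$ one (so $q_i=1$ in that case). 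This handles all four cases at once and collapses the distinction into the single numeric condition $q_1^{-1}+q_2^{-1}\geq 1$, which is precisely $p_1^{-1}+p_2^{-1}\leq 1$ re-expressed in $(i)$ and automatic in $(ii)$--$(iv)$ because at least one $q_i$ equals $1$.

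The core is two localisation estimates for the dual pairings. Since $T_{\mathbf{k}}\varphi$ is supported near $\mathbf{k}$, I would fix a cutoff $\chi$ equal to $1$ on $\supp\varphi$, set $\chi_{\mathbf{k}}=T_{\mathbf{k}}\chi$, write $\langle T_{\mathbf{k}}\varphi,x'\rangle=\langle T_{\mathbf{k}}\varphi,\chi_{\mathbf{k}}x'\rangle$, bound this by $\|T_{\mathbf{k}}\varphi\|_F\|\chi_{\mathbf{k}}x'\|_{F'}$, use $\omega_F\equiv 1$ to replace $\|T_{\mathbf{k}}\varphi\|_F$ by $\|\varphi\|_F$, and invoke the $\mathcal{F}L^1_{\nu_{F'}}$-module structure (with the translation invariance of the $\mathcal{F}L^1_{\nu}$-norm) to obtain $\|\chi_{\mathbf{k}}x'\|_{F'}\lesssim\sum_{\mathbf{r}}\|\phi_{\mathbf{k}+\mathbf{r}}x'\|_{F'}$, the sum running over a fixed finite set of shifts. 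The identical computation in the $E'$ variable gives $|\langle\phi_{\mathbf{k}}e,y'\rangle|\lesssim\|\phi_{\mathbf{k}}e\|_E\sum_{\mathbf{s}}\|\phi_{\mathbf{k}+\mathbf{s}}y'\|_{E'}$. The decisive input is that $e\in W(E,L^{\infty}_{\eta_1\eta_2,0})$ forces, by Remark \ref{rem-eq-nor-se-iis}, the sequence $c_{\mathbf{k}}:=\eta_1(\mathbf{k})\eta_2(\mathbf{k})\|\phi_{\mathbf{k}}e\|_E$ to lie in $c_0(\ZZ^d)$. Setting $\alpha_{\mathbf{m}}(x')=\|\phi_{\mathbf{m}}x'\|_{F'}/\eta_1(\mathbf{m})$ and $\beta_{\mathbf{m}}(y')=\|\phi_{\mathbf{m}}y'\|_{E'}/\eta_2(\mathbf{m})$, Remark \ref{rem-eq-nor-se-iis} yields $\|\alpha(x')\|_{\ell^{q_1}}\lesssim\|x'\|_{X'}$ and $\|\beta(y')\|_{\ell^{q_2}}\lesssim\|y'\|_{Y'}$, while moderateness of $\eta_1,\eta_2$ absorbs the bounded shifts into the weights. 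Combining the three bounds gives
\[
|\langle T_{\mathbf{k}}\varphi,x'\rangle|\,|\langle\phi_{\mathbf{k}}e,y'\rangle|\ \lesssim\ c_{\mathbf{k}}\Big(\sum_{\mathbf{r}}\alpha_{\mathbf{k}+\mathbf{r}}(x')\Big)\Big(\sum_{\mathbf{s}}\beta_{\mathbf{k}+\mathbf{s}}(y')\Big).
\]

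To finish I would sum over $\mathbf{k}\in G$, pull the finite $\mathbf{r},\mathbf{s}$-sums outside, and factor $\sup_{\mathbf{k}\in G}c_{\mathbf{k}}$ out of each term; the residual $\sum_{\mathbf{k}\in G}\alpha_{\mathbf{k}+\mathbf{r}}(x')\beta_{\mathbf{k}+\mathbf{s}}(y')$ is controlled by H\"older with a conjugate pair $r,r'$ chosen so that $r\geq q_1$ and $r'\geq q_2$. Such a pair exists precisely because $q_1^{-1}+q_2^{-1}\geq 1$, and then monotonicity of the $\ell^p$-norms gives $\|\alpha(x')\|_{\ell^{r}}\leq\|\alpha(x')\|_{\ell^{q_1}}$ and $\|\beta(y')\|_{\ell^{r'}}\leq\|\beta(y')\|_{\ell^{q_2}}$, so the residual is $\lesssim\|x'\|_{X'}\|y'\|_{Y'}\leq 1$ uniformly on the dual unit balls. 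Taking the supremum over $x',y'$ yields $\|\sum_{\mathbf{k}\in G}(T_{\mathbf{k}}\varphi)\otimes(\phi_{\mathbf{k}}e)\|_{\epsilon}\lesssim\sup_{\mathbf{k}\in G}c_{\mathbf{k}}$, and since $\{c_{\mathbf{k}}\}\in c_0$ this supremum drops below $\varepsilon$ for every $G$ avoiding a large enough finite set, giving the Cauchy criterion and hence summability in each of $(i)$--$(iv)$.

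I expect the main obstacle to be the bookkeeping that ties the geometric hypothesis to the analytic one: recognising that extracting the $\ell^{\infty}$-norm of the $c_0$-sequence $\{c_{\mathbf{k}}\}$ from the double pairing forces the two surviving sequences $\alpha(x')$ and $\beta(y')$ to be paired by H\"older, which succeeds uniformly only when $q_1^{-1}+q_2^{-1}\geq 1$, i.e. $p_1^{-1}+p_2^{-1}\leq 1$. The endpoint and $L^{\infty}_0$-cases ($q_i\in\{1,\infty\}$) must be checked separately but collapse to the pairing $r=1,\,r'=\infty$ (or vice versa). By contrast, the localisation estimates are routine once $\omega_F\equiv 1$ and the module structures are in place, and the $c_0$-membership of $\{c_{\mathbf{k}}\}$ supplied by $W(E,L^{\infty}_{\eta_1\eta_2,0})$ is exactly what drives the tails of the injective norm to zero.
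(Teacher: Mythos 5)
Your proposal is correct and follows essentially the same route as the paper's proof: the Cauchy criterion for the net of partial sums in the $\epsilon$-norm, identification of the duals via Lemma \ref{dual-ams-s1}$(iii)$, localisation of the dual pairings through the BUPU and the $\mathcal{F}L^1_{\nu}$-module structure with $\omega_F\equiv1$, extraction of the $c_0$-sequence $\eta_1(\mathbf{k})\eta_2(\mathbf{k})\|\phi_{\mathbf{k}}e\|_E$ using moderateness of the weights, and H\"older on the residual bilinear sum under $q_1^{-1}+q_2^{-1}\geq1$. The only cosmetic differences are that you treat all four cases uniformly and handle non-conjugate exponents by an intermediate conjugate pair, where the paper first reduces to the conjugate (or endpoint) case via the continuous inclusions of the tensor products.
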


\begin{proof} For $1\leq p_1\leq p'_1<\infty$ and $1\leq p_2\leq p'_2<\infty$, \eqref{inc-s-fr} gives the following continuous inclusion
\beqs
W(F,L^{p_1}_{\eta_1})\hat{\otimes}_{\epsilon}W(E,L^{p_2}_{\eta_2})\subseteq W(F,L^{p'_1}_{\eta_1})\hat{\otimes}_{\epsilon}W(E,L^{p'_2}_{\eta_2}).
\eeqs
Thus, to prove $(i)$ it suffices to verify it when $p_1=p\in(1,\infty)$ and $p_2=q\in(1,\infty)$ are H\"older conjugate.\\
\indent Let $\varphi\in \DD(\RR^d)$ and $e\in W(E,L^{\infty}_{\eta_1\eta_2,0})$. There is $k_0\in\ZZ_+$ such that $\supp\varphi\subseteq (-k_0,k_0)^d$. As $\eta_1$ and $\eta_2$ are polynomially bounded weights, there exists $C'\geq 1$ such that
\beq\label{est-for-pbww}
\eta_1(\mathbf{k}+\mathbf{s})\eta_2(\mathbf{k}+\mathbf{r})\leq C'\eta_1(\mathbf{k})\eta_2(\mathbf{k}),\quad \forall \mathbf{k}\in\ZZ^d,\, \forall \mathbf{r}\in J_1,\, \forall \mathbf{s}\in J_{k_0}.
\eeq
Let $\varepsilon>0$ and let $A'$ and $B'$ be equicontinuous subsets of $W(F,L^p_{\eta_1})'_b$ and $W(E,L^q_{\eta_2})'_b$ respectively. Because of Lemma \ref{dual-ams-s1}, without losing generality, we can assume
\beqs
A'&=&\{f'\in W(F',L^q_{1/\eta_1})|\, \|f'\|_{W(F',L^q_{1/\eta_1})}\leq C_1\},\quad \mbox{for some}\,\, C_1\geq 1,\\
B'&=&\{e'\in W(E',L^p_{1/\eta_2})|\, \|e'\|_{W(E',L^p_{1/\eta_2})}\leq C_2\},\quad \mbox{for some}\,\, C_2\geq 1.
\eeqs
As $e\in W(E,L^{\infty}_{\eta_1\eta_2,0})$, in view of Remark \ref{rem-eq-nor-se-iis}, there exists $n_0\in \ZZ_+$ such that
\beqs
\eta_1(\mathbf{k})\eta_2(\mathbf{k})\|\phi_{\mathbf{k}}e\|_E\leq \varepsilon\cdot (2\cdot3^d(2k_0+1)^dC'\tilde{C}^2C_1C_2\|\varphi\|_F)^{-1} =\varepsilon',\quad \forall \mathbf{k}\in\ZZ^d\backslash J_{n_0},
\eeqs
where $\tilde{C}\geq 1$ is the maximum of the two constants $C$ in Remark \ref{rem-eq-nor-se-iis} for the spaces $W(F',L^q_{1/\eta_1})$ and $W(E',L^p_{1/\eta_2})$. Then for finite $\Phi_1,\Phi_2\subseteq \ZZ^d$ with $J_{n_0}\subseteq \Phi_1\cap \Phi_2$, and $f'\in A'$ and $e'\in B'$, we infer
\begin{multline*}
\left|\left\langle f'\otimes e',\sum_{\mathbf{k}\in \Phi_1}(T_{\mathbf{k}}\varphi)\otimes(\phi_{\mathbf{k}}e)- \sum_{\mathbf{k}\in \Phi_2}(T_{\mathbf{k}}\varphi)\otimes(\phi_{\mathbf{k}}e)\right\rangle\right|\\
\leq \sum_{\mathbf{k}\in \Phi_1\backslash J_{n_0}}|\langle f',T_{\mathbf{k}}\varphi\rangle||\langle e',\phi_{\mathbf{k}}e\rangle|+\sum_{\mathbf{k}\in \Phi_2\backslash J_{n_0}}|\langle f',T_{\mathbf{k}}\varphi\rangle||\langle e',\phi_{\mathbf{k}}e\rangle|=I_1+I_2.
\end{multline*}
We estimate $I_1$ as follows
\beqs
I_1&\leq& \sum_{\mathbf{r}\in J_1}\sum_{\mathbf{s}\in J_{k_0}}\sum_{\mathbf{k}\in \Phi_1\backslash J_{n_0}}|\langle \phi_{\mathbf{k}+\mathbf{s}}f',T_{\mathbf{k}}\varphi\rangle||\langle \phi_{\mathbf{k}+\mathbf{r}}e',\phi_{\mathbf{k}}e\rangle|\\
&\leq& \sum_{\mathbf{r}\in J_1}\sum_{\mathbf{s}\in J_{k_0}}\sum_{\mathbf{k}\in \Phi_1\backslash J_{n_0}} \|\phi_{\mathbf{k}+\mathbf{s}}f'\|_{F'}\|T_{\mathbf{k}}\varphi\|_F \|\phi_{\mathbf{k}+\mathbf{r}}e'\|_{E'}\|\phi_{\mathbf{k}}e\|_E\\
&\leq& \varepsilon'\|\varphi\|_F\sum_{\mathbf{r}\in J_1}\sum_{\mathbf{s}\in J_{k_0}}\sum_{\mathbf{k}\in \Phi_1\backslash J_{n_0}} \|\phi_{\mathbf{k}+\mathbf{s}}f'\|_{F'}\|\phi_{\mathbf{k}+\mathbf{r}}e'\|_{E'} \eta_1(\mathbf{k})^{-1}\eta_2(\mathbf{k})^{-1}.
\eeqs
In view of \eqref{est-for-pbww} and Remark \ref{rem-eq-nor-se-iis}, we infer
\beqs
I_1\leq C'\varepsilon'\|\varphi\|_F\sum_{\mathbf{r}\in J_1}\sum_{\mathbf{s}\in J_{k_0}}\left(\sum_{\mathbf{k}\in \ZZ^d} \frac{\|\phi_{\mathbf{k}+\mathbf{s}}f'\|_{F'}^q} {\eta_1(\mathbf{k}+\mathbf{s})^q}\right)^{1/q} \left(\sum_{\mathbf{k}\in \ZZ^d} \frac{\|\phi_{\mathbf{k}+\mathbf{r}}e'\|_{E'}^p} {\eta_2(\mathbf{k}+\mathbf{r})^p}\right)^{1/p}\leq \varepsilon/2.
\eeqs
Analogously, $I_2\leq \varepsilon/2$ and consequently,
\beqs
\sup_{f'\in A',\,\,e'\in B'} \left|\left\langle f'\otimes e',\sum_{\mathbf{k}\in \Phi_1}(T_{\mathbf{k}}\varphi)\otimes(\phi_{\mathbf{k}}e)- \sum_{\mathbf{k}\in \Phi_2}(T_{\mathbf{k}}\varphi)\otimes(\phi_{\mathbf{k}}e)\right\rangle\right|\leq \varepsilon,
\eeqs
which completes the proof of $(i)$. To prove $(ii)$ it suffices to verify the claim for $W(F,L^{\infty}_{\eta_1,0})\hat{\otimes}_{\epsilon}W(E,L^1_{\eta_2})$ as this spaces is continuously included into the rest of the spaces in $(ii)$; the proof of this fact can be performed in an analogous fashion as for $(i)$. For $(iii)$, one reasons in the same way as for $(ii)$, and finally, $(iv)$ follows from either one of $(i)$, $(ii)$ or $(iii)$.
\end{proof}

\begin{lemma}\label{inc-eps-top1}
Let $E$, $F$, $\eta_1$ and $\eta_2$ be as in Theorem \ref{meinproposition1}. Then $W(E,L^{\infty}_{\eta_1\eta_2,0})$ is continuously included into $\Modsq{X}$ where $X$ is either one of the following TMIB spaces:
\begin{itemize}
\item[$(a)$] $X=W(F,L^{p_1}_{\eta_1})\hat{\otimes}_{\epsilon}\mathcal{F}W(E,L^{p_2}_{\eta_2})$, with $p_1^{-1}+p_2^{-1}\leq 1$, $p_1,p_2\in(1,\infty)$;
\item[$(b)$] $X=W(F,L^{\infty}_{\eta_1,0})\hat{\otimes}_{\epsilon}\mathcal{F}W(E,L^{p_2}_{\eta_2})$, with $p_2\in[1,\infty)$;
\item[$(c)$] $X=W(F,L^{p_1}_{\eta_1})\hat{\otimes}_{\epsilon}\mathcal{F}W(E,L^{\infty}_{\eta_2,0})$, with $p_1\in[1,\infty)$;
\item[$(d)$] $X=W(F,L^{\infty}_{\eta_1,0})\hat{\otimes}_{\epsilon} \mathcal{F}W(E,L^{\infty}_{\eta_2,0})$.
\end{itemize}
\end{lemma}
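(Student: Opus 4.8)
The plan is to follow the architecture of the proof of Lemma \ref{lemma-for-inc1}, replacing the absolutely convergent reconstruction used there for $\pi$-tensor products by the $\epsilon$-topology summability supplied by Lemma \ref{lem-sum-con1}. I will carry out case $(a)$ in detail; cases $(b)$, $(c)$ and $(d)$ are identical, invoking parts $(ii)$, $(iii)$ and $(iv)$ of Lemma \ref{lem-sum-con1} in place of part $(i)$. First I would fix the auxiliary functions exactly as in Lemma \ref{lemma-for-inc1}: take $g\in\DD(\RR^d)$ with $0\leq g\leq 1$ and $g=1$ on $[-2,2]^d$, pick a nonnegative $\varphi\in\DD((-1,1)^d)$ with $\int_{\RR^d}\varphi(x)dx=1$, and set $\chi=\varphi*g$, so that $\chi=1$ on $[-1,1]^d$. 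These choices give the two identities $(T_{\mathbf{k}}\varphi)*g=T_{\mathbf{k}}\chi$ and $\phi_{\mathbf{k}}T_{\mathbf{k}}\chi=\phi_{\mathbf{k}}$ for every $\mathbf{k}\in\ZZ^d$, which are what drive the reconstruction at the end.

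Next, given $e\in W(E,L^{\infty}_{\eta_1\eta_2,0})$, Lemma \ref{lem-sum-con1}$(i)$ guarantees that the family $\{(T_{\mathbf{k}}\varphi)\otimes(\phi_{\mathbf{k}}e)\}_{\mathbf{k}\in\ZZ^d}$ is summable in $W(F,L^{p_1}_{\eta_1})\hat{\otimes}_{\epsilon}W(E,L^{p_2}_{\eta_2})$. I would then apply the partial Fourier transform $\operatorname{Id}\otimes\mathcal{F}$. Since $\mathcal{F}\colon W(E,L^{p_2}_{\eta_2})\rightarrow \mathcal{F}W(E,L^{p_2}_{\eta_2})$ is an isometric isomorphism by the very definition of the associated Fourier space, and the completed $\epsilon$-tensor product is functorial with respect to isomorphisms of the factors, $\operatorname{Id}\otimes\mathcal{F}$ is an isometric isomorphism onto $X=W(F,L^{p_1}_{\eta_1})\hat{\otimes}_{\epsilon}\mathcal{F}W(E,L^{p_2}_{\eta_2})$. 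Hence $\{(T_{\mathbf{k}}\varphi)\otimes\mathcal{F}(\phi_{\mathbf{k}}e)\}_{\mathbf{k}\in\ZZ^d}$ is summable in $X$ to some $\tilde{f}\in X$. Moreover, re-running the duality estimate for $I_1$ from the proof of Lemma \ref{lem-sum-con1} over all of $\ZZ^d$ (instead of over $\ZZ^d\backslash J_{n_0}$) and bounding $\|\phi_{\mathbf{k}}e\|_E$ by $\|e\|_{W(E,L^{\infty}_{\eta_1\eta_2,0})}/(\eta_1(\mathbf{k})\eta_2(\mathbf{k}))$ yields a bound $\|\tilde{f}\|_X\leq C\|e\|_{W(E,L^{\infty}_{\eta_1\eta_2,0})}$ with $C$ independent of $e$, which will furnish the continuity of the inclusion.

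It remains to reconstruct $e$ from $\tilde{f}$. Since $X\hookrightarrow \SSS'(\RR^{2d})$ and $V_g^*\colon \SSS'(\RR^{2d})\rightarrow \SSS'(\RR^d)$ is continuous, $V_g^*$ is continuous on $X$ and therefore commutes with the summable family. Applying the identity \eqref{tensact1} to each rank-one tensor $(T_{\mathbf{k}}\varphi)\otimes\mathcal{F}(\phi_{\mathbf{k}}e)$ (legitimate because $T_{\mathbf{k}}\varphi\in\DD(\RR^d)$ and $\mathcal{F}^{-1}\mathcal{F}(\phi_{\mathbf{k}}e)=\phi_{\mathbf{k}}e\in E$), together with $(T_{\mathbf{k}}\varphi)*g=T_{\mathbf{k}}\chi$ and $\phi_{\mathbf{k}}T_{\mathbf{k}}\chi=\phi_{\mathbf{k}}$, I obtain
\beqs
V_g^*\tilde{f}=\sum_{\mathbf{k}\in\ZZ^d}(\mathcal{F}^{-1}\mathcal{F}(\phi_{\mathbf{k}}e))((T_{\mathbf{k}}\varphi)*g)=\sum_{\mathbf{k}\in\ZZ^d}(\phi_{\mathbf{k}}e)(T_{\mathbf{k}}\chi)=\sum_{\mathbf{k}\in\ZZ^d}\phi_{\mathbf{k}}e=e.
\eeqs
Then \cite[Proposition 4.4]{dppv-3} gives $e\in\Modsq{X}$ with $\|e\|_{\Modsq{X}}\leq C'\|\tilde{f}\|_X$, and combining this with the norm bound of the previous step establishes the continuous inclusion $W(E,L^{\infty}_{\eta_1\eta_2,0})\hookrightarrow\Modsq{X}$.

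The main obstacle I anticipate lies in this last step. Unlike the $\pi$-tensor setting of Lemma \ref{lemma-for-inc1}, the defining series is only summable in the weaker $\epsilon$-topology, so the term-by-term norm estimates are unavailable and one must justify interchanging $V_g^*$ with an $\epsilon$-summable (rather than absolutely summable) family; this is exactly what the continuity of $V_g^*$ on $X$ delivers. Simultaneously one must check that \eqref{tensact1} survives for the non-Schwartz second factors $\mathcal{F}(\phi_{\mathbf{k}}e)\in\mathcal{F}E$, which it does because $\mathcal{F}^{-1}$ returns the honest element $\phi_{\mathbf{k}}e\in E$. The secondary technical point is upgrading the purely qualitative summability of Lemma \ref{lem-sum-con1} to the quantitative norm bound on $\|\tilde{f}\|_X$, but this is routine once one notes that the same duality computation that proves summability also bounds the full sum linearly in $\|e\|_{W(E,L^{\infty}_{\eta_1\eta_2,0})}$.
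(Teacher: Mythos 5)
Your proposal is correct and follows essentially the same route as the paper: the same auxiliary functions $g$, $\varphi$, $\chi=\varphi*g$, summability of $\{(T_{\mathbf{k}}\varphi)\otimes\mathcal{F}(\phi_{\mathbf{k}}e)\}_{\mathbf{k}\in\ZZ^d}$ via Lemma \ref{lem-sum-con1} and the continuity of $\operatorname{Id}\hat{\otimes}_{\epsilon}\mathcal{F}$, reconstruction through $V_g^*$ and the identity \eqref{tensact1}, and the appeal to \cite[Proposition 4.4]{dppv-3}. The only (immaterial) difference is that you obtain continuity of the inclusion by a quantitative norm bound on $\tilde{f}$, whereas the paper gets it for free from the closed graph theorem, all spaces being continuously included in $\SSS'(\RR^d)$.
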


\begin{proof} Take $g\in\DD(\RR^d)$ such that $0\leq g\leq 1$ and $g=1$ on $[-2,2]^d$. Pick nonnegative $\varphi\in\DD((-1,1)^d)$ such that $\int_{\RR^d}\varphi(x)dx=1$, and define $\chi=\varphi*g\in\DD(\RR^d)$. Then $\chi$ is nonnegative and $\chi=1$ on $[-1,1]^d$ (and thus on $\supp\phi$ as well). Set $\varphi_{\mathbf{k}}=T_{\mathbf{k}}\varphi$, $\mathbf{k}\in\ZZ^d$. Let $f\in W(E,L^{\infty}_{\eta_1\eta_2,0})$. Since $\operatorname{Id}\hat{\otimes}_{\epsilon}\mathcal{F}$ is continuous as a mapping from the spaces in $(i)$, $(ii)$, $(iii)$ and $(iv)$ in Lemma \ref{lem-sum-con1} into the spaces in $(a)$, $(b)$, $(c)$ and $(d)$ respectively, Lemma \ref{lem-sum-con1} implies that $\sum_{\mathbf{k}\in\ZZ^d} \varphi_{\mathbf{k}}\otimes \mathcal{F}(\phi_{\mathbf{k}}f)$ is summable in all of the spaces in $(a)$, $(b)$, $(c)$ and $(d)$ to an $\tilde{f}$ which belongs in all of these spaces. As
\beqs
V^*_g(\tilde{f})=\sum_{\mathbf{k}\in\ZZ^d}V^*_g(\varphi_{\mathbf{k}}\otimes \mathcal{F}(\phi_{\mathbf{k}}f))=\sum_{\mathbf{k}\in\ZZ^d} (\phi_{\mathbf{k}}f)(\varphi_{\mathbf{k}}*g)=\sum_{\mathbf{k}\in\ZZ^d} f\phi_{\mathbf{k}}T_{\mathbf{k}}\chi=\sum_{\mathbf{k}\in\ZZ^d} f\phi_{\mathbf{k}}=f,
\eeqs
\cite[Proposition 4.4]{dppv-3} yields that $W(E,L^{\infty}_{\eta_1\eta_2,0})\subseteq \Modsq{X}$ with $X$ either one of the TMIB spaces in $(a)$, $(b)$, $(c)$ or $(d)$. The fact that the inclusions are continuous follows from the closed graph theorem as all of these spaces are continuously included into $\SSS'(\RR^d)$.
\end{proof}

\begin{proof}[Proof of Theorem \ref{meinproposition1}] Since all of the spaces in $(i)$, $(ii)$ and $(iii)$ are continuously included into the space in $(iv)$, in view of Lemma \ref{inc-eps-top1} it is enough to prove that
\beqs
\Modsq{W(F,L^{\infty}_{\eta_1,0})\hat{\otimes}_{\epsilon} \mathcal{F}W(E,L^{\infty}_{\eta_2,0})}\subseteq W(E,L^{\infty}_{\eta_1,\eta_2,0})
\eeqs
as the continuity of the inclusion will follow from the open mapping theorem. By \cite[Proposition 4.4]{dppv-3} it suffices to show that
\beq\label{inc-t-prr1}
V^*_g(W(F,L^{\infty}_{\eta_1,0})\hat{\otimes}_{\epsilon} \mathcal{F}W(E,L^{\infty}_{\eta_2,0}))\subseteq W(E,L^{\infty}_{\eta_1,\eta_2,0}),\quad \mbox{for some}\,\,g\in \DD(\RR^d)\backslash\{0\}.
\eeq
Fix $g\in\DD(\RR^d)\backslash\{0\}$ and pick $n_0\in \ZZ_+$ such that $\supp g\subseteq (-n_0,n_0)^d$. Let $u\in \SSS(\RR^d)\otimes \SSS(\RR^d)$; hence
\beqs
u=\sum_{j=1}^s \varphi_j\otimes \psi_j,\quad \mbox{with}\,\, \varphi_j,\psi_j\in\SSS(\RR^d),\,j=1,\ldots,s.
\eeqs
Consequently
\beqs
V^*_gu=\sum_{j=1}^s (\mathcal{F}^{-1}\psi_j)(\varphi_j*g).
\eeqs
Set $\tilde{g}(x,y)=g(y-x)$; clearly $\tilde{g}\in\DD_{L^{\infty}}(\RR^{2d})$. Let $e'\in E'$ be arbitrary but fixed. Denoting by $1_{\RR^d}$ the functions on $\RR^d$ which is identically equal to $1$, we infer
\beq
\langle e', \phi_{\mathbf{k}}V^*_gu\rangle&=&\left\langle 1_{\RR^d}\otimes (\phi_{\mathbf{k}}e'),\tilde{g}\sum_{j=1}^s \varphi_j\otimes (\mathcal{F}^{-1}\psi_j)\right\rangle=\langle 1_{\RR^d}\otimes (\phi_{\mathbf{k}}e'),\tilde{g}(\operatorname{Id}\otimes \mathcal{F}^{-1})(u)\rangle\nonumber\\
&=& \sum_{\mathbf{m}\in\ZZ^d}\langle \tilde{g}(\phi_{\mathbf{m}}\otimes (\phi_{\mathbf{k}}e')),(\operatorname{Id}\otimes \mathcal{F}^{-1})(u)\rangle.
\eeq
It is straightforward to check that if $\mathbf{m}\not\in \mathbf{k}+J_{n_0+2}$ then $\tilde{g}(\phi_{\mathbf{m}}\otimes (\phi_{\mathbf{k}}e'))=0$. Pick $\chi\in\DD(\RR^d)$ such that $0\leq \chi\leq 1$ and $\chi=1$ on $\supp\phi$. Denoting $\chi_{\mathbf{n}}=T_{\mathbf{n}}\chi$, $\mathbf{n}\in\ZZ^d$, we infer
\beq\label{est-du-fbb}
|\langle e', \phi_{\mathbf{k}}V^*_gu\rangle|\leq \sum_{\mathbf{r}\in J_{n_0+2}}|\langle \phi_{\mathbf{k}+\mathbf{r}}\otimes (\phi_{\mathbf{k}}e'),\tilde{g}(\chi_{\mathbf{k}+\mathbf{r}}\otimes 1_{\RR^d}) (\operatorname{Id}\otimes \mathcal{F}^{-1})(u)\rangle|.
\eeq
Denote by $X$ the TMIB space $W(F,L^{\infty}_{\eta_1,0})\hat{\otimes}_{\epsilon} W(E,L^{\infty}_{\eta_2,0})$; its norm is given by
\beqs
\|v\|_X=\sup_{\|f'\|_{W(F,L^{\infty}_{\eta_1,0})'_b}\leq 1}\sup_{\|e'\|_{W(E,L^{\infty}_{\eta_2,0})'_b}\leq 1}|\langle f'\otimes e',v\rangle|,\quad v\in X.
\eeqs
For any $\mathbf{n}\in \ZZ^d$, we have
\beqs
\sum_{\mathbf{m}\in\ZZ^d}\frac{\|\phi_{\mathbf{n}}\phi_{\mathbf{m}}\|_{F'}} {\eta_1(\mathbf{m})}\leq \sum_{\mathbf{r}\in J_1}\frac{\|\phi_{\mathbf{n}}\|_{F'} \|\phi_{\mathbf{n}+\mathbf{r}}\|_{\mathcal{F}L^1_{\nu_F}}}{\eta_1(\mathbf{n}+\mathbf{r})} \leq C_1\|\phi\|_{\mathcal{F}L^1_{\nu_F}}\|\phi\|_{F'}/\eta_1(\mathbf{n}).
\eeqs
Consequently, Lemma \ref{dual-ams-s1} $(iii)$ together with Remark \ref{rem-eq-nor-se-iis} imply that
\beqs
\|\phi_{\mathbf{n}}\|_{W(F,L^{\infty}_{\eta_1,0})'_b}\leq C'_1/\eta_1(\mathbf{n}),\quad \forall \mathbf{n}\in\ZZ^d.
\eeqs
Similarly, for any $\mathbf{n}\in\ZZ^d$,
\beqs
\sum_{\mathbf{m}\in\ZZ^d}\frac{\|\phi_{\mathbf{n}}\phi_{\mathbf{m}}e'\|_{E'}} {\eta_2(\mathbf{m})}\leq \sum_{\mathbf{r}\in J_1}\frac{\|\phi_{\mathbf{n}}\|_{\mathcal{F}L^1_{\nu_E}} \|\phi_{\mathbf{n}+\mathbf{r}}\|_{\mathcal{F}L^1_{\nu_E}}\|e'\|_{E'}} {\eta_2(\mathbf{n}+\mathbf{r})}\leq C_2\|\phi\|_{\mathcal{F}L^1_{\nu_E}}^2\|e'\|_{E'}/\eta_2(\mathbf{n}),
\eeqs
and, again, Lemma \ref{dual-ams-s1} $(iii)$ and Remark \ref{rem-eq-nor-se-iis} imply $\phi_{\mathbf{n}}e'\in W(E,L^{\infty}_{\eta_2,0})'_b$ with
\beqs
\|\phi_{\mathbf{n}}e'\|_{W(E,L^{\infty}_{\eta_2,0})'_b}\leq C'_2\|e'\|_{E'}/\eta_2(\mathbf{n}),\quad \forall \mathbf{n}\in\ZZ^d.
\eeqs
Plugging these bounds in \eqref{est-du-fbb}, we infer
\beqs
|\langle e', \phi_{\mathbf{k}}V^*_gu\rangle|\leq \frac{C_3\|e'\|_{E'}}{\eta_1(\mathbf{k})\eta_2(\mathbf{k})} \sum_{\mathbf{r}\in J_{n_0+2}}\|\tilde{g}(\chi_{\mathbf{k}+\mathbf{r}}\otimes 1_{\RR^d}) (\operatorname{Id}\otimes \mathcal{F}^{-1})(u)\|_X,\quad \forall \mathbf{k}\in\ZZ^d.
\eeqs
As $e'\in E'$ was arbitrary, we deduce
\beqs
\eta_1(\mathbf{k})\eta_2(\mathbf{k})\|\phi_{\mathbf{k}}V^*_gu\|_E\leq C_3 \sum_{\mathbf{r}\in J_{n_0+2}}\|\tilde{g}(\chi_{\mathbf{k}+\mathbf{r}}\otimes 1_{\RR^d}) (\operatorname{Id}\otimes \mathcal{F}^{-1})(u)\|_X,\quad \forall \mathbf{k}\in\ZZ^d.
\eeqs
Notice that $\tilde{g}(\chi_{\mathbf{k}+\mathbf{r}}\otimes 1)\in\SSS(\RR^{2d})$. Our immediate goal is to estimate the norm of the latter in $\mathcal{F}L^1_{\nu_X}$. In view of \cite[Theorem 3.6]{dppv-3}, we have
\beqs
\nu_X=\nu_{W(F,L^{\infty}_{\eta_1,0})}\otimes\nu_{W(E,L^{\infty}_{\eta_2,0})}\leq \nu_F\otimes\nu_E;
\eeqs
the very last inequality is easy to verify. As
\beqs
\mathcal{F}^{-1}(\tilde{g}(\chi_{\mathbf{k}+\mathbf{r}}\otimes 1))(t,\xi)=e^{2\pi i (\mathbf{k}+\mathbf{r})(t+\xi)}\mathcal{F}^{-1}\chi(t+\xi)\mathcal{F}^{-1}g(\xi),
\eeqs
we deduce $\|\tilde{g}(\chi_{\mathbf{k}+\mathbf{r}}\otimes 1)\|_{\mathcal{F}L^1_{\nu_X}}\leq \|\chi\|_{\mathcal{F}L^1}\|g\|_{\mathcal{F}L^1_{\nu_E}}$, for all $\mathbf{k}\in\ZZ^d$, $\mathbf{r}\in J_{n_0+2}$. Hence
\beqs
\eta_1(\mathbf{k})\eta_2(\mathbf{k})\|\phi_{\mathbf{k}}V^*_gu\|_E\leq C_4\|(\operatorname{Id}\otimes \mathcal{F}^{-1})(u)\|_X,\quad \forall \mathbf{k}\in\ZZ^d.
\eeqs
Since $\operatorname{Id}\hat{\otimes}_{\epsilon}\mathcal{F}^{-1}: W(F,L^{\infty}_{\eta_1,0})\hat{\otimes}_{\epsilon} \mathcal{F}W(E,L^{\infty}_{\eta_2,0})\rightarrow X$ is continuous, Remark \ref{rem-eq-nor-se-iis} yields
\beqs
\|V^*_gu\|_{W(E,L^{\infty}_{\eta_1\eta_2,0})}\leq C_5\|u\|_{W(F,L^{\infty}_{\eta_1,0})\hat{\otimes}_{\epsilon} \mathcal{F}W(E,L^{\infty}_{\eta_2,0})},\quad \forall u\in \SSS(\RR^d)\otimes \SSS(\RR^d).
\eeqs
The validity of \eqref{inc-t-prr1} follows from the density of $\SSS(\RR^d)\otimes \SSS(\RR^d)$ in $W(F,L^{\infty}_{\eta_1,0})\hat{\otimes}_{\epsilon} \mathcal{F}W(E,L^{\infty}_{\eta_2,0})$.
\end{proof}

In the next section we derive a number of interesting corollaries of Theorem \ref{meinproposition1}. The same observation we made at the very end of Section \ref{sub-sec-pi-prod} holds equally well for Theorem \ref{meinproposition1}: the space $F$ has no effect on the resulting modulation space; only the global behaviour of the first amalgam space matters.

\section{Consequences of Theorem \ref{1.21} and Theorem \ref{meinproposition1}}\label{sec-example}

We devote this section to deriving a number of consequences of the two main results. We start by considering the modulation spaces associated to tensor products of $L^p$ spaces.

\begin{corollary}\label{cor12}
The following assertions hold true.
\begin{itemize}
\item[$(a)$] $\Modsq{L^{p_1}\hat{\otimes}_{\pi} L^{p_2}}=W(\mathcal F L^{p_2},L^1)$, for all $1\leq p_1\leq p_2\leq 2$.
\item[$(b)$] $\Modsq{L^{p_1}\hat{\otimes}_{\varepsilon} L^{p_2}}=\Modsq{\mathcal{C}_0\hat{\otimes}_{\epsilon} L^{p_2}}= W(\mathcal F L^{p_2},L^{\infty}_0)$, for all $2\leq p_2\leq p_1<\infty$.
\item[$(c)$] Let $\eta_1$ and $\eta_2$ be two polynomially bounded weights on $\RR^d$. Then
    \beqs
    \Modsq{\mathcal{C}_{\eta_1\otimes \eta_2,0}(\RR^{2d})} = \Modsq{\mathcal{C}_{\eta_1,0}(\RR^d)\hat{\otimes}_{\epsilon} \mathcal{C}_{\eta_2,0}(\RR^d)}= W(\mathcal{F}\mathcal{C}_{\check{\eta}_2,0},L^{\infty}_{\eta_1,0}).
    \eeqs
\end{itemize}
\end{corollary}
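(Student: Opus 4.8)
The plan is to derive every assertion from the two main theorems by a squeezing argument. In each case the right-hand side has the shape $W(E,L^{1})$, $W(E,L^{\infty}_{0})$ or $W(E,L^{\infty}_{\eta_1,0})$, which already pins down the local component $E$; the remaining freedom is in the ``variable'' tensor factor, and the idea is to trap it between two (Fourier-)amalgam spaces to which Theorem \ref{1.21} (for $(a)$) or Theorem \ref{meinproposition1} (for $(b)$ and $(c)$) applies and which produce the same output. I will use three facts throughout: the diagonal identities $L^{p}=W(L^{p},L^{p})$, $\mathcal{C}_0=W(\mathcal{C}_0,L^{\infty}_0)$ and $\mathcal{C}_{\eta,0}=W(\mathcal{C}_0,L^{\infty}_{\eta,0})$ (immediate from Remark \ref{rem-eq-nor-se-iis}, and with $\omega_{L^{p}}=\omega_{\mathcal{C}_0}=1$, so these are admissible first local components $F$); the identification $\mathcal{F}W(\mathcal{F}L^{p},L^{q})=M^{p,q}$ with equivalent norms, obtained from Lemma \ref{ident-mod-amalg} by applying $\mathcal{F}$ and using $\mathcal{F}^{2}f=\check f$ together with the reflection invariance of $M^{p,q}$; and the monotonicity of $\hat{\otimes}_\pi$, $\hat{\otimes}_\epsilon$ (functoriality, as in the commutative diagram of Section \ref{notation}) and of $X\mapsto\Modsq{X}$.

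For $(a)$ I may assume $p_1<p_2$, the diagonal case being Lemma \ref{dipiprxx3}; then $p_2>1$, so $p_2'<\infty$ and all spaces below are genuine TMIB spaces. Put $E=\mathcal{F}L^{p_2}$, $\eta_1=\eta_2=1$, and take the first factor to be $L^{p_1}=W(L^{p_1},L^{p_1})$. The standard embeddings $M^{p_2,p_2}\hookrightarrow L^{p_2}\hookrightarrow M^{p_2,p_2'}$ (valid for $p_2\le2$; see \cite{Grochenig}, or read them off from the amalgam inclusions of \cite{fe81} used in Lemma \ref{dipiprxx3}) become, via the identification above, $\mathcal{F}W(\mathcal{F}L^{p_2},L^{p_2})\hookrightarrow L^{p_2}\hookrightarrow \mathcal{F}W(\mathcal{F}L^{p_2},L^{p_2'})$. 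Both exponent pairs satisfy the constraint $p_1^{-1}+r_2^{-1}\ge1$ of Theorem \ref{1.21}$(i)$ — for $r_2=p_2$ because $p_1,p_2\le2$, and for $r_2=p_2'$ because $p_1\le p_2$ — so Theorem \ref{1.21}$(i)$ returns $W(\mathcal{F}L^{p_2},L^{1})$ at each end. Tensoring the displayed chain with $L^{p_1}$ and applying $\Modsq{\cdot}$ squeezes $\Modsq{L^{p_1}\hat{\otimes}_\pi L^{p_2}}$ between two copies of $W(\mathcal{F}L^{p_2},L^{1})$.

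Part $(b)$ runs the same way with Theorem \ref{meinproposition1} and $E=\mathcal{F}L^{p_2}$, $\eta_1=\eta_2=1$. Since now $p_2\ge2$ the embeddings flip to $M^{p_2,p_2'}\hookrightarrow L^{p_2}\hookrightarrow M^{p_2,p_2}$, i.e.\ $\mathcal{F}W(\mathcal{F}L^{p_2},L^{p_2'})\hookrightarrow L^{p_2}\hookrightarrow \mathcal{F}W(\mathcal{F}L^{p_2},L^{p_2})$. With the first factor $L^{p_1}=W(L^{p_1},L^{p_1})$ ($p_1\ge2$) the pairs $(p_1,p_2')$ and $(p_1,p_2)$ satisfy $p_1^{-1}+r_2^{-1}\le1$ (from $p_2\le p_1$ and from $p_1,p_2\ge2$ respectively), so Theorem \ref{meinproposition1}$(i)$ gives $W(\mathcal{F}L^{p_2},L^{\infty}_0)$ at both ends; for the first factor $\mathcal{C}_0=W(\mathcal{C}_0,L^{\infty}_0)$ one uses Theorem \ref{meinproposition1}$(ii)$ instead, whose sole requirement $r_2\in[1,\infty)$ holds for both $r_2=p_2'$ and $r_2=p_2$. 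Squeezing then gives both identities in $(b)$ by the same scheme.

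For $(c)$ the first equality is the classical tensor identity $\mathcal{C}_{\eta_1,0}\hat{\otimes}_\epsilon\mathcal{C}_{\eta_2,0}=\mathcal{C}_{\eta_1\otimes\eta_2,0}(\RR^{2d})$ (reduce to $\mathcal{C}_0\hat{\otimes}_\epsilon\mathcal{C}_0=\mathcal{C}_0$ on the product by the weight isomorphisms, cf.\ \cite{ryan}), so the two modulation spaces coincide. For the second equality set $E=\mathcal{F}\mathcal{C}_{\check{\eta}_2,0}$, take the first factor $\mathcal{C}_{\eta_1,0}=W(\mathcal{C}_0,L^{\infty}_{\eta_1,0})$, and apply Theorem \ref{meinproposition1} with its two weights chosen to be $\eta_1$ and $1$, so that its output weight is $\eta_1\cdot1=\eta_1$. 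Applying $\mathcal{F}$ to the chain \eqref{inc-am-sp-n} for $E$ and using $\mathcal{F}E=\mathcal{F}^{2}\mathcal{C}_{\check{\eta}_2,0}=\mathcal{C}_{\eta_2,0}$ gives $\mathcal{F}W(E,L^{1})\hookrightarrow\mathcal{C}_{\eta_2,0}\hookrightarrow\mathcal{F}W(E,L^{\infty}_0)$; tensoring with $\mathcal{C}_{\eta_1,0}$ and invoking Theorem \ref{meinproposition1}$(ii)$ at the lower end (second exponent $1$) and $(iv)$ at the upper end produces $W(\mathcal{F}\mathcal{C}_{\check{\eta}_2,0},L^{\infty}_{\eta_1,0})$ at both ends, closing the sandwich. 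I expect the only real difficulty to be bookkeeping at the endpoints: one must verify that each intermediate space is a bona fide TMIB space and that the H\"older constraints of Theorem \ref{1.21}$(i)$/Theorem \ref{meinproposition1}$(i)$ survive on the boundary — this is precisely why the diagonal $p_1=p_2$ (where $p_2'=\infty$ at $p_2=1$) is removed in $(a)$ — while the reflection invariance behind $\mathcal{F}W(\mathcal{F}L^{p},L^{q})=M^{p,q}$ and the weighted $\mathcal{C}_0$ tensor identity are the remaining points needing a line of justification.
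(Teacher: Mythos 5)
Your proposal is correct and follows essentially the same route as the paper: in every part the variable tensor factor is squeezed between two spaces of the form $\mathcal{F}W(\mathcal{F}L^{p_2},L^{r})$ (resp.\ $\mathcal{F}W(\mathcal{F}\mathcal{C}_{\check{\eta}_2,0},L^{r})$) whose exponents meet the hypotheses of Theorem \ref{1.21} or Theorem \ref{meinproposition1} and produce the same output, after which the open mapping theorem gives equivalence of norms. The only cosmetic differences are that you take $M^{p_2,p_2}$ rather than $M^{p_2,1}=\mathcal{F}^{-1}W(\mathcal{F}L^{p_2},L^1)$ as the lower endpoint of the sandwich in $(a)$ (and dually in $(b)$), and that you dispose of the diagonal case of $(a)$ via Lemma \ref{dipiprxx3}, whereas the paper only separates off $p_2=1$ using $L^1\hat{\otimes}_{\pi}L^1=L^1(\RR^{2d})$.
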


\begin{proof} $(a)$ If $p_2=1$ then $p_1=1$ and the claim follows from $L^1(\RR^d)\hat{\otimes}_{\pi}L^1(\RR^d)=L^1(\RR^{2d})$ (see \cite[Section 2.3]{ryan}), since $\Modsq{L^1(\RR^{2d})}=M^{1,1}=W(\mathcal{F}L^1,L^1)$.\\
\indent By \cite[Lemma 1.2 $iv)$]{fe81} we have $L^p=W(L^p,L^p)\subseteq W(\mathcal{F}L^q,L^p)$, for all $p\in[1,2]$, where $q$ is the H\"older conjugate to $p$. Consequently, \cite[Theorem 3.2]{fe81} gives $\mathcal{F}^{-1}L^p=\mathcal{F}L^p\subseteq W(\mathcal{F}L^p,L^q)$ which yields the continuous inclusion:
\beq\label{ste-for-s}
L^p\subseteq \mathcal{F}W(\mathcal{F}L^p,L^q),\quad \forall p\in(1,2].
\eeq
Let $1\leq p_1\leq p_2\leq 2$ with $p_2>1$ and denote by $q_1$ and $q_2$ the H\"older conjugate indexes to $p_1$ and $p_2$ respectively. As $W(\mathcal{F}L^{p_2},L^1)\subseteq \mathcal{F}L^{p_2}=\mathcal{F}^{-1}L^{p_2}$ (by \eqref{inc-am-sp-n}), employing \eqref{ste-for-s}, we deduce
\beqs
W(L^{p_1},L^{p_1})\hat{\otimes}_{\pi} \mathcal{F}W(\mathcal{F}L^{p_2},L^1)\subseteq L^{p_1}\hat{\otimes}_{\pi} L^{p_2}\subseteq W(L^{p_1},L^{p_1})\hat{\otimes}_{\pi} \mathcal{F}W(\mathcal{F}L^{p_2},L^{q_2}).
\eeqs
Now the claim follows from Theorem \ref{1.21}, since $p_1^{-1}+q_2^{-1}\geq 1$ (equivalent to $p_1\leq p_2$).\\
\indent $(b)$ Let $2\leq p_2\leq p_1<\infty$ and denote by $q_1$ and $q_2$ the H\"older conjugate indexes to $p_1$ and $p_2$ respectively. Employing \cite[Theorem 3.2]{fe81} and \cite[Lemma 1.2 $iv)$]{fe81}, we infer the continuous inclusions:
\beqs
\mathcal{F}W(\mathcal{F}L^{p_2},L^{q_2})\subseteq W(\mathcal{F}L^{q_2},L^{p_2})\subseteq W(L^{p_2},L^{p_2})=L^{p_2}.
\eeqs
Since $\mathcal{F}^{-1}L^{p_2}=\mathcal{F}L^{p_2}\subseteq W(\mathcal{F}L^{p_2},L^{\infty}_0)$ (by \eqref{inc-am-sp-n}), we deduce the continuous inclusions:
\beqs
W(L^{p_1},L^{p_1})\hat{\otimes}_{\epsilon} \mathcal{F}W(\mathcal{F}L^{p_2}, L^{q_2}) \subseteq L^{p_1}\hat{\otimes}_{\epsilon} L^{p_2}\subseteq W(L^{p_1},L^{p_1})\hat{\otimes}_{\epsilon} \mathcal{F}W(\mathcal{F}L^{p_2},L^{\infty}_0).
\eeqs
Now Theorem \ref{meinproposition1} gives $\Modsq{L^{p_1}\hat{\otimes}_{\epsilon} L^{p_2}}=W(\mathcal{F}L^{p_2},L^{\infty}_0)$, since $p_1^{-1}+q_2^{-1}\leq 1$ (equivalent to $p_2\leq p_1$). The proof of $\Modsq{\mathcal{C}_0\hat{\otimes}_{\epsilon} L^{p_2}}=W(\mathcal{F}L^{p_2},L^{\infty}_0)$ is analogous since $\mathcal{C}_0=W(\mathcal{C}_0,L^{\infty}_0)$.\\
\indent $(c)$ The first equality follows from $\mathcal{C}_{\eta_1,0}(\RR^d)\hat{\otimes}_{\epsilon}\mathcal{C}_{\eta_2,0}(\RR^d)= \mathcal{C}_{\eta_1\otimes\eta_2,0}(\RR^{2d})$. As (cf. \eqref{inc-am-sp-n})
\beqs
W(\mathcal{F}\mathcal{C}_{\check{\eta}_2,0},L^1)\subseteq \mathcal{F}\mathcal{C}_{\check{\eta}_2,0}\subseteq W(\mathcal{F}\mathcal{C}_{\check{\eta}_2,0},L^{\infty}_0),
\eeqs
$\mathcal{F}\mathcal{C}_{\check{\eta}_2,0}=\mathcal{F}^{-1}\mathcal{C}_{\eta_2,0}$ and $\mathcal{C}_{\eta_1,0}=W(\mathcal{C}_0,L^{\infty}_{\eta_1,0})$, we have the continuous inclusions:
\beqs
W(\mathcal{C}_0,L^{\infty}_{\eta_1,0})\hat{\otimes}_{\epsilon} \mathcal{F}W(\mathcal{F}\mathcal{C}_{\check{\eta}_2,0},L^1)\subseteq \mathcal{C}_{\eta_1,0}\hat{\otimes}_{\epsilon}\mathcal{C}_{\eta_2,0}\subseteq W(\mathcal{C}_0,L^{\infty}_{\eta_1,0})\hat{\otimes}_{\epsilon} \mathcal{F}W(\mathcal{F}\mathcal{C}_{\check{\eta}_2,0},L^{\infty}_0)
\eeqs
and the claim follows from Theorem \ref{meinproposition1}.
\end{proof}

\begin{remark}\label{rem-for-lpp}
In view of Lemma \ref{ident-mod-amalg}, Corollary \ref{cor12} $(a)$ implies
\beq
\Modsq{L^{p_1}\hat{\otimes}_{\pi} L^{p_2}}=\mathcal{F}M^{p_2,1},\quad \mbox{for all}\,\, 1\leq p_1\leq p_2\leq 2.
\eeq
\end{remark}

Next, we derive two rather general corollaries of Theorem \ref{1.21} and Theorem \ref{meinproposition1} which can be employed for deducing a variety of special instances.

\begin{corollary}\label{cor-for-inbetwe-spa}
Let $E$ and $F$ be two TMIB spaces on $\RR^d$ such that $\omega_F(x)=1$, $\forall x\in\RR^d$, and let $\eta$ be a polynomially bounded weight on $\RR^d$. Assume that $E$ and $F$ satisfy the following conditions for some $p\in[1,\infty)$:
\begin{itemize}
\item[$(a)$] there exists a TMIB space $E_1$ on $\RR^d$ such that $W(E_1,L^1_{\eta})\subseteq \mathcal{F}^{-1}E\subseteq W(E_1,L^q_{\eta})$ with $q$ the H\"older conjugate index to $p$;
\item[$(b)$] $F\subseteq W(F_1,L^p)$ for some TMIB space $F_1$ on $\RR^d$ satisfying $\omega_{F_1}(x)=1$, $\forall x\in\RR^d$.
\end{itemize}
Then $\Modsq{F\hat{\otimes}_{\pi} E}=W(E_1,L^1_{\eta})$ with equivalent norms.\\
\indent In particular, if $E$ satisfies $(a)$ with $E_1=\mathcal{F}L^p_{\eta_1}$ for some polynomially bounded weight $\eta_1$, then $\Modsq{F\hat{\otimes}_{\pi} E}=W(\mathcal{F}L^p_{\eta_1},L^1_{\eta})$ with equivalent norms.
\end{corollary}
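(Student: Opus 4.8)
The plan is to realise $\Modsq{F\hat{\otimes}_{\pi} E}$ by squeezing $F\hat{\otimes}_{\pi}E$ between two completed $\pi$-tensor products of amalgam spaces to which Theorem \ref{1.21} applies, both producing $W(E_1,L^1_\eta)$, and then to pass to modulation spaces using the evident monotonicity: a continuous inclusion $X\subseteq Y$ of (D)TMIB spaces induces a continuous inclusion $\Modsq{X}\subseteq\Modsq{Y}$, since $V_gf\in X\subseteq Y$. Throughout I take $\eta_1=1$ and $\eta_2=\eta$ in Theorem \ref{1.21} (so $\eta_1\eta_2=\eta$), with $F_1$ and $E_1$ serving as the local components of the first and second factors; note $\omega_F=\omega_{F_1}=1$ as Theorem \ref{1.21} requires.

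For the lower inclusion, applying $\mathcal{F}$ to the left half of $(a)$ gives the continuous inclusion $\mathcal{F}W(E_1,L^1_\eta)\subseteq E$, while \eqref{inc-am-sp-n} gives $W(F,L^1)\subseteq F$. Realising the tensor products as subspaces of $\SSS'(\RR^{2d})$ and using functoriality of $\hat{\otimes}_{\pi}$ (the commutative diagram in Section \ref{notation}), these combine into
$$W(F,L^1)\hat{\otimes}_{\pi}\mathcal{F}W(E_1,L^1_\eta)\subseteq F\hat{\otimes}_{\pi}E.$$
Theorem \ref{1.21}$(i)$ with $p_1=p_2=1$ (so $p_1^{-1}+p_2^{-1}=2\geq 1$) identifies the modulation space of the left-hand side as $W(E_1,L^1_\eta)$, so monotonicity yields $W(E_1,L^1_\eta)\subseteq\Modsq{F\hat{\otimes}_{\pi}E}$ continuously.

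For the upper inclusion, $(b)$ gives $F\subseteq W(F_1,L^p)$ and the right half of $(a)$ (after applying $\mathcal{F}$) gives $E\subseteq\mathcal{F}W(E_1,L^q_\eta)$, whence $F\hat{\otimes}_{\pi}E\subseteq W(F_1,L^p)\hat{\otimes}_{\pi}\mathcal{F}W(E_1,L^q_\eta)$. When $p\in(1,\infty)$ we have $q\in(1,\infty)$ and $p^{-1}+q^{-1}=1$, so Theorem \ref{1.21}$(i)$ (now with $p_1=p$, $p_2=q$) identifies the right-hand modulation space as $W(E_1,L^1_\eta)$; monotonicity then gives $\Modsq{F\hat{\otimes}_{\pi}E}\subseteq W(E_1,L^1_\eta)$. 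Together with the lower inclusion this proves $\Modsq{F\hat{\otimes}_{\pi}E}=W(E_1,L^1_\eta)$ as sets with two continuous inclusions between Banach spaces; the open mapping theorem promotes this to equality with equivalent norms, and the ``in particular'' assertion is just the specialisation $E_1=\mathcal{F}L^p_{\eta_1}$.

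The only delicate point, and the step I expect to be the main obstacle, is the endpoint $p=1$, where $q=\infty$ and Theorem \ref{1.21}$(i)$ is unavailable (its second slot requires a finite exponent). Here I would instead invoke Theorem \ref{1.21}$(iii)$, which treats $\mathcal{F}W(E_1,L^\infty_{\eta,0})$. This forces me first to upgrade the inclusion $\mathcal{F}^{-1}E\subseteq W(E_1,L^\infty_\eta)$ coming from $(a)$ to $\mathcal{F}^{-1}E\subseteq W(E_1,L^\infty_{\eta,0})$. The upgrade is automatic: $\mathcal{F}^{-1}E$ is a TMIB space, so $\SSS(\RR^d)$ is dense in it, and by \eqref{inc-s-fr} one has $\SSS(\RR^d)\subseteq W(E_1,L^1_\eta)\subseteq W(E_1,L^\infty_{\eta,0})$, the latter a closed subspace of $W(E_1,L^\infty_\eta)$; approximating an arbitrary element of $\mathcal{F}^{-1}E$ in $W(E_1,L^\infty_\eta)$ by Schwartz functions forces the limit into this closed subspace. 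With $E\subseteq\mathcal{F}W(E_1,L^\infty_{\eta,0})$ and $F\subseteq W(F_1,L^1)$, the inclusion $F\hat{\otimes}_{\pi}E\subseteq W(F_1,L^1)\hat{\otimes}_{\pi}\mathcal{F}W(E_1,L^\infty_{\eta,0})$ together with Theorem \ref{1.21}$(iii)$ again yields $W(E_1,L^1_\eta)$, completing the upper inclusion in this last case.
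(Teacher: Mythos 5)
Your proposal is correct and follows essentially the same route as the paper: sandwich $F\hat{\otimes}_{\pi}E$ between $W(F,L^1)\hat{\otimes}_{\pi}\mathcal{F}W(E_1,L^1_{\eta})$ and $W(F_1,L^p)\hat{\otimes}_{\pi}\mathcal{F}W(E_1,L^q_{\eta})$, apply Theorem \ref{1.21}, and at the endpoint $p=1$ upgrade $\mathcal{F}^{-1}E\subseteq W(E_1,L^{\infty}_{\eta})$ to $\mathcal{F}^{-1}E\subseteq W(E_1,L^{\infty}_{\eta,0})$ via density of $\SSS(\RR^d)$ so that Theorem \ref{1.21}$(iii)$ applies. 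This is exactly the paper's argument, including the treatment of the delicate $p=1$ case.
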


\begin{proof} Since all of the spaces are continuously included into $\SSS'(\RR^d)$, the closed graph theorem implies that all of the inclusions in $(a)$ and $(b)$ are continuous. Assume first $p\in(1,\infty)$ and hence $q\in(1,\infty)$. As $W(F,L^1)\subseteq F$ (by \eqref{inc-am-sp-n}), we infer the continuous inclusions:
\beqs
W(F,L^1)\hat{\otimes}_{\pi} \mathcal{F}W(E_1,L^1_{\eta})\subseteq F\hat{\otimes}_{\pi} E \subseteq W(F_1,L^p)\hat{\otimes}_{\pi} \mathcal{F}W(E_1,L^q_{\eta}),
\eeqs
and the result follows from Theorem \ref{1.21}.\\
\indent When $p=1$, we can not directly apply Theorem \ref{1.21}; notice that $W(E_1, L^{\infty}_{\eta})$ is not necessarily even a TMIB space ($\SSS(\RR^d)$ may fail to be dense in it). However, as $\SSS(\RR^d)$ is dense in $\mathcal{F}^{-1}E$, we deduce that $\mathcal{F}^{-1}E$ is continuously included into the closure of $\SSS(\RR^d)$ in $W(E_1, L^{\infty}_{\eta})$. The latter space is $W(E_1,L^{\infty}_{\eta,0})$. Indeed, $\SSS(\RR^d)$ is dense in $W(E_1,L^{\infty}_{\eta,0})$ and $W(E_1,L^{\infty}_{\eta,0})$ is a closed subspace of $W(E_1,L^{\infty}_{\eta})$. Thus, the second inclusion in $(a)$ can be strengthen as $\mathcal{F}^{-1}E\subseteq W(E_1,L^{\infty}_{\eta,0})$. Now, the rest of the proof can be done in an analogous fashion as for the case $p\in(1,\infty)$.
\end{proof}

The analogous result for the $\epsilon$-tensor product is the following; the proof is similar to the proof of Corollary \ref{cor-for-inbetwe-spa} and we omit it (of course, now one applies Theorem \ref{meinproposition1}).

\begin{corollary}\label{prs-t-tmibspa}
Let $E$ and $F$ be two TMIB spaces on $\RR^d$ such that $\omega_F(x)=1$, $\forall x\in\RR^d$, and let $\eta$ be a polynomially bounded weight on $\RR^d$. Assume that $E$ and $F$ satisfy the following conditions for some $p\in(1,\infty)$:
\begin{itemize}
\item[$(a)$] there exists a TMIB space $E_1$ on $\RR^d$ such that $W(E_1,L^q_{\eta})\subseteq \mathcal{F}^{-1}E\subseteq W(E_1,L^{\infty}_{\eta,0})$ with $q$ the H\"older conjugate index to $p$;
\item[$(b)$] $W(F_1,L^p)\subseteq F$ for some TMIB space $F_1$ on $\RR^d$ satisfying $\omega_{F_1}(x)=1$, $\forall x\in\RR^d$.
\end{itemize}
Then $\Modsq{F\hat{\otimes}_{\epsilon} E}=W(E_1,L^{\infty}_{\eta,0})$ with equivalent norms.\\
\indent In particular, if $E$ satisfies $(a)$ with $E_1=\mathcal{F}L^p_{\eta_1}$ for some polynomially bounded weight $\eta_1$, then $\Modsq{F\hat{\otimes}_{\epsilon} E}=W(\mathcal{F}L^p_{\eta_1},L^{\infty}_{\eta,0})$ with equivalent norms.
\end{corollary}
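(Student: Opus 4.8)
The plan is to mirror the proof of Corollary \ref{cor-for-inbetwe-spa}, replacing the $\pi$-tensor product by the $\epsilon$-tensor product and Theorem \ref{1.21} by Theorem \ref{meinproposition1}, and to realise $F\hat{\otimes}_{\epsilon}E$ between two amalgam-type $\epsilon$-products whose associated modulation spaces both collapse to $W(E_1,L^{\infty}_{\eta,0})$. First I would observe, exactly as before, that every space in sight embeds continuously into $\SSS'(\RR^d)$, so the closed graph theorem upgrades all the inclusions in $(a)$ and $(b)$ to continuous ones. Applying $\mathcal{F}$ to $(a)$ turns the hypothesis on $\mathcal{F}^{-1}E$ into the continuous chain $\mathcal{F}W(E_1,L^q_{\eta})\subseteq E\subseteq \mathcal{F}W(E_1,L^{\infty}_{\eta,0})$, while for $F$ I would combine $(b)$ with the standard inclusion $F\hookrightarrow W(F,L^{\infty}_0)$ coming from \eqref{inc-am-sp-n}, giving $W(F_1,L^p)\subseteq F\subseteq W(F,L^{\infty}_0)$.

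Next I would invoke the fact (recorded via the commutative diagram in Section \ref{notation}) that the completed $\epsilon$-tensor product of TMIB spaces preserves continuous inclusions, which sandwiches $F\hat{\otimes}_{\epsilon}E$ between
\[
W(F_1,L^p)\hat{\otimes}_{\epsilon} \mathcal{F}W(E_1,L^q_{\eta})\subseteq F\hat{\otimes}_{\epsilon}E\subseteq W(F,L^{\infty}_0)\hat{\otimes}_{\epsilon} \mathcal{F}W(E_1,L^{\infty}_{\eta,0}).
\]
Since $\Modsq{\cdot}$ is monotone with respect to continuous inclusions of TMIB spaces, it remains to identify the two endpoint modulation spaces. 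For the lower one I would apply Theorem \ref{meinproposition1}$(i)$ with $\eta_1=1$, $\eta_2=\eta$, $p_1=p$, $p_2=q$; here the index constraint $p_1^{-1}+p_2^{-1}\leq 1$ holds with equality because $p$ and $q$ are H\"older conjugate, and the required normalisation of the first local component is precisely $\omega_{F_1}=1$ from $(b)$, so this endpoint equals $W(E_1,L^{\infty}_{\eta,0})$. For the upper one I would apply Theorem \ref{meinproposition1}$(iv)$ with the same weights, noting that $W(F,L^{\infty}_0)$ is a TMIB space by Lemma \ref{dual-ams-s1}$(ii)$ and that its first local component is $F$ with $\omega_F=1$ by hypothesis; this endpoint again equals $W(E_1,L^{\infty}_{\eta,0})$.

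The squeeze then forces $\Modsq{F\hat{\otimes}_{\epsilon}E}=W(E_1,L^{\infty}_{\eta,0})$ as sets, and the open mapping theorem promotes this to an equivalence of norms; the final ``in particular'' assertion is immediate upon taking $E_1=\mathcal{F}L^p_{\eta_1}$. Because the present hypothesis restricts to $p\in(1,\infty)$, the delicate endpoint analysis that forced the separate $p=1$ argument in Corollary \ref{cor-for-inbetwe-spa} does not arise here: one has $q\in(1,\infty)$, and the $L^{\infty}_{\eta,0}$-global component already yields a genuine TMIB space (again by Lemma \ref{dual-ams-s1}$(ii)$), so everything stays inside the hypotheses of Theorem \ref{meinproposition1}. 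Consequently the only point demanding care is the bookkeeping, namely verifying that both sandwiching $\epsilon$-products satisfy the correct index inequality and the $\omega=1$ normalisation of the first local component, so that Theorem \ref{meinproposition1} applies and returns the same space $W(E_1,L^{\infty}_{\eta,0})$ at both ends.
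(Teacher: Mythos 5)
Your proposal is correct and is essentially the argument the paper intends: the paper omits this proof, stating only that it mirrors Corollary \ref{cor-for-inbetwe-spa} with Theorem \ref{meinproposition1} in place of Theorem \ref{1.21}, and your sandwich between $W(F_1,L^p)\hat{\otimes}_{\epsilon}\mathcal{F}W(E_1,L^q_{\eta})$ and $W(F,L^{\infty}_0)\hat{\otimes}_{\epsilon}\mathcal{F}W(E_1,L^{\infty}_{\eta,0})$, identified via parts $(i)$ and $(iv)$ of that theorem, is exactly that argument. Your closing observation that the restriction $p\in(1,\infty)$ (together with the $L^{\infty}_{\eta,0}$ upper bound in hypothesis $(a)$) removes the need for the separate endpoint treatment that arose at $p=1$ in Corollary \ref{cor-for-inbetwe-spa} is also accurate.
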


As a consequence of these corollaries, we have the following result.

\begin{corollary}
Let $\eta_1$ and $\eta_2$ be two polynomially bounded weights on $\RR^d$ which satisfy $\eta_1=\check{\eta}_1$ and $\eta_2=\check{\eta}_2$.
\begin{itemize}
\item[$(i)$] For any $1\leq p_2\leq p_1<\infty$ satisfying $p_1^{-1}+p_2^{-1}\geq 1$,
\beqs
\Modsq{M^{p_1,p_2}\hat{\otimes}_{\pi} M^{p_1,p_2}_{\eta_1\otimes\eta_2}}=W(\mathcal{F}L^{p_1}_{\eta_1},L^1_{\eta_2})\quad \mbox{with equivalent norms.}
\eeqs
\item[$(ii)$] For any $1< p_1\leq p_2<\infty$ satisfying $p_1^{-1}+p_2^{-1}\leq 1$,
\beqs
\Modsq{M^{p_1,p_2}\hat{\otimes}_{\epsilon} M^{p_1,p_2}_{\eta_1\otimes\eta_2}}=W(\mathcal{F}L^{p_1}_{\eta_1},L^{\infty}_{\eta_2,0})\quad \mbox{with equivalent norms.}
\eeqs
\end{itemize}
\end{corollary}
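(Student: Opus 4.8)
The plan is to derive both identities directly from Corollary \ref{cor-for-inbetwe-spa} (for $(i)$) and Corollary \ref{prs-t-tmibspa} (for $(ii)$), applied with $F=M^{p_1,p_2}$ and $E=M^{p_1,p_2}_{\eta_1\otimes\eta_2}$, taking $E_1=\mathcal{F}L^{p_1}_{\eta_1}$, $\eta=\eta_2$, and the corollary parameter $p=p_1$ (so its H\"older conjugate is $q=p_1'$). Both $F$ and $E$ are TMIB spaces because they are of the form $\Modsq{L^{p_1,p_2}_{\eta}}$ with finite exponents, and $\omega_F=\omega_{M^{p_1,p_2}}=1$ since on an unweighted modulation space translation only shifts the time variable of the STFT and inserts a unimodular factor, hence acts isometrically. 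So the work reduces to verifying hypotheses $(a)$ and $(b)$ of the two corollaries and reading off the index constraints.

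The first ingredient is the identification of $\mathcal{F}^{-1}E$. By Lemma \ref{ident-mod-amalg}, $E=\mathcal{F}^{-1}W(\mathcal{F}L^{p_1}_{\eta_1},L^{p_2}_{\eta_2})$, so $\mathcal{F}^{-1}E=\mathcal{F}^{-2}W(\mathcal{F}L^{p_1}_{\eta_1},L^{p_2}_{\eta_2})$ is the reflection of that amalgam space. Since reflection commutes with $\mathcal{F}$, and since the assumptions $\eta_1=\check\eta_1$, $\eta_2=\check\eta_2$ render both the local component $\mathcal{F}L^{p_1}_{\eta_1}$ and the global weight $\eta_2$ reflection invariant, the reflection acts as the identity up to equivalent norms, giving $\mathcal{F}^{-1}E=W(\mathcal{F}L^{p_1}_{\eta_1},L^{p_2}_{\eta_2})$. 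With this, condition $(a)$ of both corollaries becomes a comparison of global Lebesgue exponents at the \emph{fixed} local component $\mathcal{F}L^{p_1}_{\eta_1}$: via Remark \ref{rem-eq-nor-se-iis} and \eqref{inc-s-fr} the required sandwich $W(\mathcal{F}L^{p_1}_{\eta_1},L^{1}_{\eta_2})\subseteq\mathcal{F}^{-1}E\subseteq W(\mathcal{F}L^{p_1}_{\eta_1},L^{q}_{\eta_2})$ for $(i)$ holds exactly when $p_2\le q=p_1'$, and the sandwich $W(\mathcal{F}L^{p_1}_{\eta_1},L^{q}_{\eta_2})\subseteq\mathcal{F}^{-1}E\subseteq W(\mathcal{F}L^{p_1}_{\eta_1},L^{\infty}_{\eta_2,0})$ for $(ii)$ holds exactly when $q=p_1'\le p_2$; these are precisely the stated inequalities $p_1^{-1}+p_2^{-1}\ge1$ and $p_1^{-1}+p_2^{-1}\le1$, while the remaining endpoint inclusions ($L^1\subseteq L^{p_2}$ for $(i)$ and $L^{p_2}\subseteq L^{\infty}_{\eta_2,0}$ for $(ii)$) are immediate from \eqref{inc-s-fr}.

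The second, and genuinely structural, ingredient is condition $(b)$: realizing $F=M^{p_1,p_2}$ relative to an amalgam space with trivial local translation weight. Here I would invoke the Fourier invariance of $M^{p_1,p_1}$ recorded after Lemma \ref{ident-mod-amalg}: together with Lemma \ref{ident-mod-amalg} at $p_1=p_2$ and trivial weights it yields $M^{p_1,p_1}=\mathcal{F}^{-1}W(\mathcal{F}L^{p_1},L^{p_1})=W(\mathcal{F}L^{p_1},L^{p_1})$ with equivalent norms, and $\omega_{\mathcal{F}L^{p_1}}=1$ because translations become modulations after Fourier transform. For $(i)$, the hypothesis $p_2\le p_1$ gives, through the global inclusion $\ell^{p_2}\subseteq\ell^{p_1}$ on the $\mathcal{F}^{-1}$-side, $M^{p_1,p_2}\subseteq M^{p_1,p_1}=W(\mathcal{F}L^{p_1},L^{p_1})$, which is condition $(b)$ of Corollary \ref{cor-for-inbetwe-spa} with $F_1=\mathcal{F}L^{p_1}$; for $(ii)$, the reverse inequality $p_1\le p_2$ gives $W(\mathcal{F}L^{p_1},L^{p_1})=M^{p_1,p_1}\subseteq M^{p_1,p_2}$, which is condition $(b)$ of Corollary \ref{prs-t-tmibspa}. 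Invoking the ``in particular'' clauses of the two corollaries with $E_1=\mathcal{F}L^{p_1}_{\eta_1}$ then produces the two asserted identities. (When $p_1=p_2=1$ in $(i)$ one is in the $p=1$ regime, handled separately inside Corollary \ref{cor-for-inbetwe-spa}; in $(ii)$ the assumption $p_1>1$ keeps $p=p_1\in(1,\infty)$ as required by Corollary \ref{prs-t-tmibspa}.)

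The hard part is therefore not the numerology but the two structural identities $\mathcal{F}^{-1}E=W(\mathcal{F}L^{p_1}_{\eta_1},L^{p_2}_{\eta_2})$ and $M^{p_1,p_1}=W(\mathcal{F}L^{p_1},L^{p_1})$; once these are in place the verification of $(a)$ and $(b)$ is a pure bookkeeping of exponent inequalities against the corollaries' hypotheses. I would be most careful to use the evenness assumptions $\eta_1=\check\eta_1$, $\eta_2=\check\eta_2$ exactly at the point where the reflection $\mathcal{F}^{-2}$ is absorbed, since without them $\mathcal{F}^{-1}E$ would carry the reflected weights and the local component of the resulting amalgam space would be $\mathcal{F}L^{p_1}_{\check\eta_1}$ instead of $\mathcal{F}L^{p_1}_{\eta_1}$.
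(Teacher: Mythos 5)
Your proposal is correct and follows essentially the same route as the paper: both reduce the statement to Corollaries \ref{cor-for-inbetwe-spa} and \ref{prs-t-tmibspa} with $E=M^{p_1,p_2}_{\eta_1\otimes\eta_2}$, $F=M^{p_1,p_2}$, $E_1=\mathcal{F}L^{p_1}_{\eta_1}$, $\eta=\eta_2$ and $p=p_1$, using Lemma \ref{ident-mod-amalg} plus the evenness of the weights to get $\mathcal{F}^{-1}E=W(\mathcal{F}L^{p_1}_{\eta_1},L^{p_2}_{\eta_2})$ and then checking the exponent inequalities for condition $(a)$. The only (immaterial) divergence is in condition $(b)$: you take $F_1=\mathcal{F}L^{p_1}$ via the embedding $M^{p_1,p_2}\subseteq M^{p_1,p_1}=W(\mathcal{F}L^{p_1},L^{p_1})$ and the Fourier invariance of $M^{p,p}$, whereas the paper takes $F_1=\mathcal{F}L^{p_2}$ via the Hausdorff--Young-type inclusion $\mathcal{F}W(\mathcal{F}L^{p_1},L^{p_2})\subseteq W(\mathcal{F}L^{p_2},L^{p_1})$ from \cite[Theorem 3.2]{fe81}; since the conclusion of either corollary is independent of $F_1$, both witnesses are equally valid.
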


\begin{proof} For any $g\in\SSS(\RR^d)\backslash\{0\}$ and $f\in\SSS'(\RR^d)$, \cite[Lemma 3.1.1, p. 39]{Grochenig} yields
\beq\label{stfte-for-m}
V_g(\mathcal{F}^{-1}f)(x,\xi)=V_{\mathcal{F}\mathcal{F}g}(\mathcal{F}f)(-x,-\xi),\quad \mbox{for all}\,\, x,\xi\in\RR^d.
\eeq
Thus $\mathcal{F}M^{p_1,p_2}_{\eta_1\otimes\eta_2}= \mathcal{F}^{-1}M^{p_1,p_2}_{\eta_1\otimes\eta_2}$ (because of the condition on the weights).\\
\indent To prove $(i)$, we apply Corollary \ref{cor-for-inbetwe-spa} with $E=M^{p_1,p_2}_{\eta_1\otimes\eta_2}$, $F=M^{p_1,p_2}$ and $\eta=\eta_2$. We claim the conditions in Corollary \ref{cor-for-inbetwe-spa} are satisfied with $E_1=\mathcal{F}L^{p_1}_{\eta_1}$, $F_1=\mathcal{F}L^{p_2}$ and $p=p_1$. Indeed, \eqref{inc-s-fr}, Lemma \ref{ident-mod-amalg} and $p_1^{-1}+p_2^{-1}\geq 1$ (which is equivalent to $p_2\leq q$) imply
\beqs
W(E_1,L^1_{\eta})\subseteq W(\mathcal{F}L^{p_1}_{\eta_1},L^{p_2}_{\eta_2})=\mathcal{F}^{-1}E\subseteq W(E_1,L^q_{\eta}).
\eeqs
Hence, the condition $(a)$ is satisfied. The above together with Lemma \ref{ident-mod-amalg} and \cite[Theorem 3.2]{fe81} (since $p_2\leq p_1$) gives $F= \mathcal{F}W(\mathcal{F}L^{p_1},L^{p_2})\subseteq W(F_1,L^p)$ which is the condition $(b)$. Now the claim follows from Corollary \ref{cor-for-inbetwe-spa}.\\
\indent The proof of $(ii)$ is analogous and we omit it (of course, one has to use Corollary \ref{prs-t-tmibspa} instead).
\end{proof}

We can slightly improve this result when $p_1$ and $p_2$ are the same in the first modulation space.

\begin{corollary}
Let $\eta_1$, $\eta_2$ and $\eta_3$ be polynomially bounded weights on $\RR^d$. Assume that $\eta_1=\check{\eta}_1$.
\begin{itemize}
\item[$(i)$] For any $p_1,p_2\in[1,\infty)$ satisfying $p_1^{-1}+p_2^{-1}\geq 1$ and any $p_3\in[1,\infty)$,
\beqs
\Modsq{M^{p_1,p_1}_{\eta_1\otimes\eta_1}\hat{\otimes}_{\pi} M^{p_3,p_2}_{\eta_3\otimes\eta_2}}= W(\mathcal{F}L^{p_3}_{\check{\eta}_3},L^1_{\eta_1\check{\eta}_2})\quad \mbox{with equivalent norms.}
\eeqs
\item[$(ii)$] For any $p_1,p_2\in(1,\infty)$ satisfying $p_1^{-1}+p_2^{-1}\leq 1$ and any $p_3\in[1,\infty)$,
\beqs
\Modsq{M^{p_1,p_1}_{\eta_1\otimes\eta_1}\hat{\otimes}_{\epsilon} M^{p_3,p_2}_{\eta_3\otimes\eta_2}}= W(\mathcal{F}L^{p_3}_{\check{\eta}_3},L^{\infty}_{\eta_1\check{\eta}_2,0})\quad \mbox{with equivalent norms.}
\eeqs
\end{itemize}
\end{corollary}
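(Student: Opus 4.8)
The plan is to feed the two tensor factors, \emph{verbatim}, into Theorem~\ref{1.21} and Theorem~\ref{meinproposition1}, after rewriting each modulation space as an amalgam space (resp.\ the Fourier image of one) of precisely the shape demanded by those theorems. I would \emph{not} route the argument through Corollary~\ref{cor-for-inbetwe-spa}/Corollary~\ref{prs-t-tmibspa}: their standing hypothesis $\omega_F\equiv 1$ is imposed on the \emph{whole} first factor, and $M^{p_1,p_1}_{\eta_1\otimes\eta_1}$ violates it, since translating $f$ shifts $V_gf$ in the time variable and hence $\omega_{M^{p_1,p_1}_{\eta_1\otimes\eta_1}}(y)$ is controlled by $\sup_x\eta_1(x+y)/\eta_1(x)$, which is in general unbounded. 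The whole point of the improvement is that in Theorem~\ref{1.21} the condition $\omega_F\equiv 1$ falls only on the \emph{local component} $F$ of the first amalgam factor $W(F,L^{p_1}_{\eta_1})$, whose global weight $\eta_1$ is left unrestricted.

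First I would treat the first factor. By Lemma~\ref{ident-mod-amalg}, $M^{p_1,p_1}_{\eta_1\otimes\eta_1}=\mathcal{F}^{-1}W(\mathcal{F}L^{p_1}_{\eta_1},L^{p_1}_{\eta_1})$. Invoking the Fourier invariance of the spaces $M^{p,p}$ (the discussion following Lemma~\ref{ident-mod-amalg}; cf.\ \cite{fei90} and \cite[Theorem 11.3.5]{Grochenig}) together with $\eta_1=\check\eta_1$, the amalgam $W(\mathcal{F}L^{p_1}_{\eta_1},L^{p_1}_{\eta_1})$ is invariant under $\mathcal{F}^{-1}$, so that $M^{p_1,p_1}_{\eta_1\otimes\eta_1}=W(\mathcal{F}L^{p_1}_{\eta_1},L^{p_1}_{\eta_1})$ with equivalent norms. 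This is exactly a first factor $W(F,L^{p_1}_{\eta_1})$ with $F=\mathcal{F}L^{p_1}_{\eta_1}$, and the decisive point is that $\omega_F\equiv 1$: indeed $\omega_{\mathcal{F}L^{p_1}_{\eta_1}}=\check\nu_{L^{p_1}_{\eta_1}}$ and $\nu_{L^{p_1}_{\eta_1}}\equiv 1$ because modulation is an isometry of $L^{p_1}_{\eta_1}$. This is the crux of the whole argument, and it is precisely here that equal exponents and the self-conjugacy $\eta_1=\check\eta_1$ are needed.

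Next I would treat the second factor. Lemma~\ref{ident-mod-amalg} gives $M^{p_3,p_2}_{\eta_3\otimes\eta_2}=\mathcal{F}^{-1}W(\mathcal{F}L^{p_3}_{\eta_3},L^{p_2}_{\eta_2})$. Writing $\mathcal{F}^{-1}=\mathcal{F}\circ R$ with $R$ the reflection $f\mapsto\check f$, and using that $R$ maps $\mathcal{F}L^{p_3}_{\eta_3}$ onto $\mathcal{F}L^{p_3}_{\check\eta_3}$ while turning the global weight $\eta_2$ into $\check\eta_2$, I obtain $M^{p_3,p_2}_{\eta_3\otimes\eta_2}=\mathcal{F}W(\mathcal{F}L^{p_3}_{\check\eta_3},L^{p_2}_{\check\eta_2})$, i.e.\ a second factor $\mathcal{F}W(E,L^{p_2}_{\check\eta_2})$ with $E=\mathcal{F}L^{p_3}_{\check\eta_3}$ (alternatively one may derive the same identity from \eqref{stfte-for-m}). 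No self-conjugacy of $\eta_2,\eta_3$ is available, which is exactly why the checks appear in the final answer. All the spaces produced in these two steps are TMIB spaces, so the hypotheses of the two main theorems are met.

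Finally I would apply Theorem~\ref{1.21}$(i)$ with $F=\mathcal{F}L^{p_1}_{\eta_1}$ (global weight $\eta_1$, exponent $p_1$) in the first slot and $E=\mathcal{F}L^{p_3}_{\check\eta_3}$ (global weight $\check\eta_2$, exponent $p_2$) in the second. Its compatibility condition $p_1^{-1}+p_2^{-1}\geq 1$ is the hypothesis of part~$(i)$, $\omega_F\equiv 1$ has just been verified, and $p_3\in[1,\infty)$ only enters through the admissible local component $E$. The conclusion $W(E,L^1_{\eta_1\check\eta_2})=W(\mathcal{F}L^{p_3}_{\check\eta_3},L^1_{\eta_1\check\eta_2})$ is precisely $(i)$. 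For $(ii)$ the same two rewritings feed into Theorem~\ref{meinproposition1}$(i)$, whose hypotheses $p_1,p_2\in(1,\infty)$ and $p_1^{-1}+p_2^{-1}\leq 1$ are exactly those assumed, giving $W(\mathcal{F}L^{p_3}_{\check\eta_3},L^{\infty}_{\eta_1\check\eta_2,0})$. The only genuinely load-bearing step is the first one; once $M^{p_1,p_1}_{\eta_1\otimes\eta_1}$ is recognized as an amalgam with an $\omega\equiv 1$ local component, the rest is bookkeeping of reflections and exponent ranges.
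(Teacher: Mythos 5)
Your proof is correct and follows essentially the same route as the paper: identify $M^{p_1,p_1}_{\eta_1\otimes\eta_1}=W(\mathcal{F}L^{p_1}_{\eta_1},L^{p_1}_{\eta_1})$ via Lemma \ref{ident-mod-amalg} and the Fourier invariance of $M^{p,p}$ under $\eta_1=\check{\eta}_1$, identify $M^{p_3,p_2}_{\eta_3\otimes\eta_2}=\mathcal{F}W(\mathcal{F}L^{p_3}_{\check{\eta}_3},L^{p_2}_{\check{\eta}_2})$ via \eqref{stfte-for-m} and Lemma \ref{ident-mod-amalg}, and then apply Theorem \ref{1.21} resp.\ Theorem \ref{meinproposition1} directly. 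Your explicit verification that $\omega_{\mathcal{F}L^{p_1}_{\eta_1}}\equiv 1$ (and your remark on why Corollaries \ref{cor-for-inbetwe-spa}--\ref{prs-t-tmibspa} cannot be used here) is a useful elaboration of a point the paper leaves implicit, but the argument is the same.
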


\begin{proof} In view of \eqref{stfte-for-m} and Lemma \ref{ident-mod-amalg}, we infer
\beqs
\mathcal{F}^{-1}M^{p_3,p_2}_{\eta_3\otimes\eta_2}= \mathcal{F}M^{p_3,p_2}_{\check{\eta}_3\otimes\check{\eta}_2}= W(\mathcal{F}L^{p_3}_{\check{\eta}_3},L^{p_2}_{\check{\eta}_2})
\eeqs
and consequently $M^{p_3,p_2}_{\eta_3\otimes\eta_2}= \mathcal{F}W(\mathcal{F}L^{p_3}_{\check{\eta}_3},L^{p_2}_{\check{\eta}_2})$. On the other hand, since $\eta_1=\check{\eta}_1$, \cite[Theorem 11.3.5 $(c)$, p. 236]{Grochenig} together with Lemma \ref{ident-mod-amalg} imply $M^{p_1,p_1}_{\eta_1\otimes\eta_1}=W(\mathcal{F}L^{p_1}_{\eta_1},L^{p_1}_{\eta_1})$. Now $(i)$ follows from Theorem \ref{1.21}. The proof of $(ii)$ is analogous and we omit it; of course, in this case, one employs Theorem \ref{meinproposition1} instead.
\end{proof}

Let $\mathcal{Q}_s(\RR^d)$ be the Shubin-Sobolev space of order $s\in\RR$ (see \cite[Chapter 4 Section 25]{Shubin}). Denoting $v_s(x,\xi)=(1+|(x,\xi)|)^s$, $x,\xi\in\RR^d$, $s\in\RR$, \cite[Lemma 2.3]{bog-cor-gro} implies $\mathcal{Q}_s=M^{2,2}_{v_s}$ with equivalent norms. In the following result, we employ the same symbol $v_s$, $s\in\RR$, also for the polynomially bounded weight $x\mapsto(1+|x|)^s$ on $\RR^d$.

\begin{corollary}
For any $s\geq 0$, it holds that
\beq\label{inc-s-tvr}
\Modsq{\mathcal{Q}_s\hat{\otimes}_{\pi} \mathcal{Q}_s}\subseteq W(L^2_{v_s},L^1_{v_s})\cap W(\mathcal{F}L^2_{v_s},L^1_{v_s}),
\eeq
with $v_s(x)=(1+|x|)^s$, $s\in\RR$. In particular,
\beq\label{in-sob-spa-mod-aml}
\Modsq{\mathcal{Q}_s\hat{\otimes}_{\pi} \mathcal{Q}_s}\hookrightarrow W(\mathcal{Q}_s,L^1_{v_s})\hookrightarrow \mathcal{Q}_s,\quad \mbox{for all}\,\, s\geq 0.
\eeq
Furthermore, for any $s\leq 0$, it holds that
\beq\label{inc-fpr-ott}
W(L^2_{v_s},L^1_{v_s})+ W(\mathcal{F}L^2_{v_s},L^1_{v_s})\subseteq \Modsq{\mathcal{Q}_s\hat{\otimes}_{\pi} \mathcal{Q}_s}.
\eeq
\end{corollary}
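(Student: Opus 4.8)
The starting point is the identification $\mathcal{Q}_s=M^{2,2}_{v_s}$ (by \cite{bog-cor-gro}), where here $v_s(x,\xi)=(1+|(x,\xi)|)^s$ is the weight on phase space $\RR^{2d}$. The plan is to squeeze $\mathcal{Q}_s\hat{\otimes}_{\pi}\mathcal{Q}_s$ between completed $\pi$-tensor products of amalgam spaces of the shape occurring in Theorem \ref{1.21}, and then simply read off the answer from that theorem. The whole point is that $v_s$ on $\RR^{2d}$ is \emph{not} a tensor weight: with $v_s(x)=(1+|x|)^s$ one only has the comparisons $v_s(x,\xi)\geq c\max(v_s(x),v_s(\xi))$ when $s\geq0$ and $v_s(x,\xi)\leq\min(v_s(x),v_s(\xi))$ when $s\leq0$, and this is precisely why the expected equality degenerates into the one-sided inclusions \eqref{inc-s-tvr} and \eqref{inc-fpr-ott}.

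I would first record two amalgam identifications used throughout. From Lemma \ref{ident-mod-amalg} and the weight-pulling equalities $W(L^2,L^2_{v_s})=W(L^2_{v_s},L^2)=L^2_{v_s}$ (immediate from Remark \ref{rem-eq-nor-se-iis}, as $v_s$ is essentially constant on each $\mathbf{k}+[-1,1]^d$) one gets $M^{2,2}_{v_s\otimes1}=L^2_{v_s}$ and $M^{2,2}_{1\otimes v_s}=\mathcal{F}L^2_{v_s}=\mathcal{H}_s$, and, applying $\mathcal{F}$ and using that $v_s$ is even, the two descriptions $\mathcal{F}W(\mathcal{F}L^2_{v_s},L^2)=L^2_{v_s}$ and $\mathcal{F}W(L^2_{v_s},L^2)=\mathcal{H}_s$. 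These are exactly the shapes Theorem \ref{1.21} wants for its second tensor factor.

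For $s\geq0$ I would apply Theorem \ref{1.21}$(i)$ twice, always with $F=L^2$, $\eta_1=v_s$, $\eta_2=1$ and $p_1=p_2=2$ (so $\omega_{L^2}=1$ and $p_1^{-1}+p_2^{-1}=1$). The first factor is in both cases $\mathcal{Q}_s\subseteq L^2_{v_s}=W(L^2,L^2_{v_s})$, valid since $v_s(x)\leq Cv_s(x,\xi)$. Taking the second factor to be $\mathcal{F}W(\mathcal{F}L^2_{v_s},L^2)=L^2_{v_s}\supseteq\mathcal{Q}_s$ produces $\Modsq{\mathcal{Q}_s\hat{\otimes}_{\pi}\mathcal{Q}_s}\subseteq W(\mathcal{F}L^2_{v_s},L^1_{v_s})$, while taking it to be $\mathcal{F}W(L^2_{v_s},L^2)=\mathcal{H}_s\supseteq\mathcal{Q}_s$ produces the inclusion into $W(L^2_{v_s},L^1_{v_s})$; intersecting yields \eqref{inc-s-tvr}. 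For \eqref{in-sob-spa-mod-aml} I would observe $\mathcal{Q}_s=L^2_{v_s}\cap\mathcal{H}_s$ (since $v_s(x,\xi)\asymp v_s(x)+v_s(\xi)$ for $s\geq0$), so by the intersection rule for amalgams sharing the global component $L^1_{v_s}$ (again via Remark \ref{rem-eq-nor-se-iis}) the right-hand side of \eqref{inc-s-tvr} is $W(\mathcal{Q}_s,L^1_{v_s})$; the last embedding $W(\mathcal{Q}_s,L^1_{v_s})\hookrightarrow\mathcal{Q}_s$ comes from \eqref{inc-am-sp-n} together with $L^1_{v_s}\subseteq L^1$ (here $v_s\geq1$).

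For $s\leq0$ the comparisons reverse, so now $L^2_{v_s}\subseteq\mathcal{Q}_s$ and $\mathcal{H}_s\subseteq\mathcal{Q}_s$. Consequently both $W(L^2,L^2_{v_s})\hat{\otimes}_{\pi}\mathcal{F}W(L^2_{v_s},L^2)$ and $W(L^2,L^2_{v_s})\hat{\otimes}_{\pi}\mathcal{F}W(\mathcal{F}L^2_{v_s},L^2)$ are contained in $\mathcal{Q}_s\hat{\otimes}_{\pi}\mathcal{Q}_s$, and Theorem \ref{1.21} identifies their modulation spaces as $W(L^2_{v_s},L^1_{v_s})$ and $W(\mathcal{F}L^2_{v_s},L^1_{v_s})$ respectively; monotonicity of $\Modsq{\cdot}$ then gives \eqref{inc-fpr-ott} after summing. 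Throughout, the continuity of every inclusion is automatic from the closed graph theorem (all spaces embed into $\SSS'(\RR^d)$) together with the monotonicity of $\Modsq{\cdot}$ and of $\hat{\otimes}_{\pi}$ in each argument. The part I expect to require the most care is the bookkeeping in the second paragraph — tracking which variable carries the weight $v_s$ and the reflection coming from $\mathcal{F}^2$ — together with checking at each step that the exponent constraint $p_1^{-1}+p_2^{-1}\geq1$ of Theorem \ref{1.21}$(i)$ is met, which the symmetric choice $p_1=p_2=2$, $F=L^2$ satisfies with equality.
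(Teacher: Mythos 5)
Your proof is correct and follows essentially the same route as the paper: identify $\mathcal{Q}_s$ with $M^{2,2}_{v_s}$, compare it with $L^2_{v_s}$ and $\mathcal{F}L^2_{v_s}$ according to the sign of $s$, sandwich the $\pi$-tensor product between products of amalgam spaces of the shape required by Theorem \ref{1.21}, and apply that theorem twice; the only (immaterial) difference is that you place the weight in the local component of the second factor ($E=L^2_{v_s}$, $\eta_2=1$) where the paper takes $E=L^2$, $\eta_2=v_s$, both yielding $W(L^2_{v_s},L^1_{v_s})$. One small caveat: the identity $W(\mathcal{F}L^2_{v_s},L^2)=\mathcal{F}L^2_{v_s}$ implicit in your descriptions of the $\mathcal{F}W(\cdot,L^2)$ spaces is not a mere weight-pulling consequence of Remark \ref{rem-eq-nor-se-iis} but the nontrivial identity \eqref{iden-fo-prr}, which the paper quotes from \cite[Corollary 7]{fei90}.
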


\begin{proof} We start by pointing out the following two identities (see \cite[Corollary 7]{fei90} and \cite[Theorem 11.3.5 $(c)$, p. 236]{Grochenig}):
\beq\label{iden-fo-prr}
\mathcal{F}L^2_{v_s}=W(\mathcal{F}L^2_{v_s},L^2),\,\, \forall s\in\RR,\quad \mbox{and}\quad \mathcal{F}\mathcal{Q}_s=\mathcal{Q}_s,\,\, \forall s\in\RR.
\eeq
Assume first $s\geq 0$. Then \cite[Proposition 11.3.1 $(b)$ and $(c)$, p. 232]{Grochenig} implies that $M^{2,2}_{v_s}\hookrightarrow L^2_{v_s}$ and $M^{2,2}_{v_s}\hookrightarrow \mathcal{F}L^2_{v_s}$. Consequently $M^{2,2}_{v_s}\subseteq L^2_{v_s}\cap \mathcal{F}L^2_{v_s}$ and $\|\cdot\|_{L^2_{v_s}}+\|\cdot\|_{\mathcal{F}L^2_{v_s}}\leq C_1\|\cdot\|_{M^{2,2}_{v_s}}$. On the other hand, it is straightforward to check $L^2_{v_s}\cap \mathcal{F}L^2_{v_s}\subseteq M^{2,2}_{v_s}$ and $\|\cdot\|_{M^{2,2}_{v_s}}\leq C_2(\|\cdot\|_{L^2_{v_s}}+\|\cdot\|_{\mathcal{F}L^2_{v_s}})$. Consequently,
\beq\label{eq-for-sob-sp}
\mathcal{Q}_s=L^2_{v_s}\cap \mathcal{F}L^2_{v_s},\quad \mbox{and}\,\,\|\cdot\|_{\mathcal{Q}_s}\,\, \mbox{is equivalent to}\,\, \|\cdot\|_{L^2_{v_s}}+\|\cdot\|_{\mathcal{F}L^2_{v_s}}.
\eeq
Thus,
\beq\label{o-for-sob}
\mathcal{Q}_s\hat{\otimes}_{\pi}\mathcal{Q}_s\subseteq L^2_{v_s}\hat{\otimes}_{\pi}\mathcal{F}L^2_{v_s}= W(L^2,L^2_{v_s})\hat{\otimes}_{\pi}\mathcal{F}W(L^2,L^2_{v_s})
\eeq
and Theorem \ref{1.21} implies $\Modsq{\mathcal{Q}_s\hat{\otimes}_{\pi} \mathcal{Q}_s}\subseteq W(L^2,L^1_{v_s^2})=W(L^2_{v_s},L^1_{v_s})$. On the other hand, \eqref{iden-fo-prr} implies
\beq\label{o-for-sob-sss}
\mathcal{Q}_s\hat{\otimes}_{\pi}\mathcal{Q}_s\subseteq W(L^2,L^2_{v_s})\hat{\otimes}_{\pi}\mathcal{F}W(\mathcal{F}L^2_{v_s},L^2)
\eeq
which, in view of Theorem \ref{1.21}, gives $\Modsq{\mathcal{Q}_s\hat{\otimes}_{\pi} \mathcal{Q}_s}\subseteq W(\mathcal{F}L^2_{v_s},L^1_{v_s})$. Consequently, \eqref{inc-s-tvr} holds true. To verify \eqref{in-sob-spa-mod-aml}, we employ \eqref{eq-for-sob-sp} to deduce the continuous inclusions
\beqs
W(L^2_{v_s},L^1_{v_s})\cap W(\mathcal{F}L^2_{v_s},L^1_{v_s})=W(\mathcal{Q}_s,L^1_{v_s})\subseteq W(\mathcal{Q}_s,L^1)\subseteq \mathcal{Q}_s.
\eeqs
This gives the continuous inclusions $\Modsq{\mathcal{Q}_s\hat{\otimes}_{\pi} \mathcal{Q}_s}\subseteq W(\mathcal{Q}_s,L^1_{v_s})\subseteq  \mathcal{Q}_s$. The density follows from the density of $\SSS(\RR^d)$ as they are TMIB spaces.\\
\indent Assume now $s\leq 0$. By \cite[Proposition 11.3.1 $(b)$ and $(c)$, p. 232]{Grochenig}, we infer $L^2_{v_s}\subseteq M^{2,2}_{v_s}$ and $\mathcal{F}L^2_{v_s}\subseteq M^{2,2}_{v_s}$. Thus, \eqref{o-for-sob} and \eqref{o-for-sob-sss} hold true but with the opposite inclusions (cf. \eqref{iden-fo-prr}) and the proof of \eqref{inc-fpr-ott} goes along the same lines as the proof of the first part.
\end{proof}

\begin{remark}
When $s=0$, $\mathcal{Q}_0=L^2$. Hence \eqref{inc-s-tvr} and \eqref{inc-fpr-ott} imply $\Modsq{L^2\hat{\otimes}_{\pi}L^2}=W(L^2,L^1)$, which is a special case of Corollary \ref{cor12} $(a)$.
\end{remark}

Finally, we collect several interesting special instances of the main results of the article.

\begin{remark}
\label{diss}
Throughout what follows $E$ and $F$ are TMIB spaces on $\RR^d$ such that $\omega_F(x)=1$, $\forall x\in\RR^d$, and $\eta$ a polynomially bounded weight on $\RR^d$.
\begin{itemize}
\item[$(i)$] Since $\mathcal{F}W(\mathcal{F}^{-1}E,L^1)\subseteq E\subseteq \mathcal{F}W(\mathcal{F}^{-1}E,L^{\infty}_0)$ (by \eqref{inc-am-sp-n}), Theorem \ref{1.21} and Theorem \ref{meinproposition1} imply
    \beqs
    \Modsq{W(F,L^1_{\eta})\hat{\otimes}_{\pi} E} =W(\mathcal{F}^{-1}E,L^1_{\eta})\quad \mbox{and}\quad \Modsq{W(F,L^{\infty}_{\eta,0})\hat{\otimes}_{\varepsilon} E} =W(\mathcal{F}^{-1}E,L^{\infty}_{\eta,0}).
    \eeqs
    Taking $F=L^1$ and $F=\mathcal{C}_0$ respectively, we deduce (as $W(\mathcal{C}_0,L^{\infty}_{\eta,0})=\mathcal{C}_{\eta,0}$)
    \beq\label{t-m-spa-sdd-ktr}
    \Modsq{L^1_{\eta}\hat{\otimes}_{\pi} E} =W(\mathcal{F}^{-1}E,L^1_{\eta})\quad \mbox{and}\quad \Modsq{\mathcal{C}_{\eta,0}\hat{\otimes}_{\varepsilon} E} =W(\mathcal{F}^{-1}E,L^{\infty}_{\eta,0}).
    \eeq
    Specialising further $E=L^p_{\eta_1}$, $p\in[1,\infty)$, with $\eta_1$ a polynomially bounded weight on $\RR^d$, in view of Lemma \ref{ident-mod-amalg} and the fact $\mathcal{F}^{-1}L^p_{\eta_1}=\mathcal{F}L^p_{\check{\eta}_1}$ with $\|\cdot\|_{\mathcal{F}^{-1}L^p_{\eta_1}}=\|\cdot\|_{\mathcal{F}L^p_{\check{\eta}_1}}$, we infer
    \beq
    \Modsq{L^1_{\eta}\hat{\otimes}_{\pi} L^p_{\eta_1}}= \mathcal{F}M^{p,1}_{\check{\eta}_1\otimes\eta}= \mathcal{F}\left(\Modsq{L^p_{\check{\eta}_1}\hat{\otimes}_{\pi} L^1_{\eta}}\right),
    \eeq
    where, the very last equality follows from the fact that $L^p_{\check{\eta}_1}\hat{\otimes}_{\pi}L^1_{\eta}$ is the Bochner space $L^1_{\eta}(\RR^d_{\xi};L^p_{\check{\eta}_1}(\RR^d_x))$ (see \cite[Section 2.3]{ryan}) and the latter is easily seen to be $L^{p,1}_{\check{\eta}_1\otimes \eta}(\RR^{2d})$. This is interesting as it identifies the Fourier transform of $M^{p,1}_{\check{\eta}_1\otimes\eta}$ as a modulation space. It is known that the Fourier transform does not switch the indexes in $M^{p,1}$ to $M^{1,p}$; the above shows that ``a switching'' does occur but in a more subtle way. Finally, we point out that $\Modsq{L^1_{\eta}\hat{\otimes}_{\pi} L^p_{\eta_1}}$ should not be confused with $M^{1,p}_{\eta\otimes\eta_1}$: in fact $L^1_{\eta}\hat{\otimes}_{\pi} L^p_{\eta_1}$ is the Bochner space $L^1_{\eta}(\RR^d_x;L^p_{\eta_1}(\RR^d_{\xi}))$ but the latter is not $L^{1,p}_{\eta\otimes\eta_1}(\RR^{2d})$.
\item[$(ii)$] Since $W(F,L^1)\subseteq F\subseteq W(F,L^{\infty}_0)$ (by \eqref{inc-am-sp-n}), Theorem \ref{1.21} and Theorem \ref{meinproposition1} imply
    \beqs
    \Modsq{F\hat{\otimes}_{\pi} \mathcal{F}W(E,L^1_{\eta})}=W(E,L^1_{\eta})\quad \mbox{and}\quad \Modsq{F\hat{\otimes}_{\epsilon} \mathcal{F}W(E,L^{\infty}_{\eta,0})}=W(E,L^{\infty}_{\eta,0}).
    \eeqs
    Taking $E=L^1$ and $E=\mathcal{C}_0$ respectively, we deduce (as $W(\mathcal{C}_0,L^{\infty}_{\eta,0})=\mathcal{C}_{\eta,0}$)
    \beqs
    \Modsq{F\hat{\otimes}_{\pi} \mathcal{F}L^1_{\eta}}=L^1_{\eta}\quad \mbox{and}\quad \Modsq{F\hat{\otimes}_{\epsilon} \mathcal{F}\mathcal{C}_{\eta,0}}=\mathcal{C}_{\eta,0}.
    \eeqs
    Consequently, since the growth of the translation group of $\mathcal{F}L^p_{\eta_1}$, $p\in[1,\infty)$, and $\mathcal{F}\mathcal{C}_{\eta_1,0}$ is $1$ for any polynomially bounded weight $\eta_1$, we have
    \begin{align*}
    &\Modsq{L^p\hat{\otimes}_{\pi} \mathcal{F}L^1_{\eta}}=\Modsq{\mathcal{C}_0\hat{\otimes}_{\pi} \mathcal{F}L^1_{\eta}}=\Modsq{\mathcal{F}L^p_{\eta_1}\hat{\otimes}_{\pi} \mathcal{F}L^1_{\eta}} = \Modsq{\mathcal{F}\mathcal{C}_{\eta_1,0}\hat{\otimes}_{\pi} \mathcal{F}L^1_{\eta}} =L^1_{\eta},\,\,\, \forall p\in[1,\infty);\\
    &\Modsq{L^p\hat{\otimes}_{\epsilon} \mathcal{F}L^1_{\eta}}=\Modsq{\mathcal{C}_0\hat{\otimes}_{\epsilon} \mathcal{F}L^1_{\eta}}=\Modsq{\mathcal{F}L^p_{\eta_1}\hat{\otimes}_{\epsilon} \mathcal{F}L^1_{\eta}} = \Modsq{\mathcal{F}\mathcal{C}_{\eta_1,0}\hat{\otimes}_{\epsilon} \mathcal{F}L^1_{\eta}} =\mathcal{C}_{\eta,0},\,\,\, \forall p\in[1,\infty).
    \end{align*}
\item[$(iii)$] Let $\eta_1$ and $\eta_2$ be polynomially bounded weights. Taking $F=L^{p_1}$ and $E=L^{p_2}$ in Theorem \ref{1.21} $(i)$ and Theorem \ref{meinproposition1} $(i)$, we deduce
    \begin{align*}
    &\Modsq{L^{p_1}_{\eta_1}\hat{\otimes}_{\pi}\mathcal{F}L^{p_2}_{\eta_2}}= W(L^{p_2},L^1_{\eta_1\eta_2}),\,\, \mbox{for all}\,\, p_1,p_2\in[1,\infty)\,\, \mbox{satisfying}\,\, p_1^{-1}+p_2^{-1}\geq 1;\\
    &\Modsq{L^{p_1}_{\eta_1}\hat{\otimes}_{\epsilon}\mathcal{F}L^{p_2}_{\eta_2}}= W(L^{p_2},L^{\infty}_{\eta_1\eta_2,0}),\,\, \mbox{for all}\,\, p_1,p_2\in(1,\infty)\,\, \mbox{satisfying}\,\, p_1^{-1}+p_2^{-1}\leq 1.
    \end{align*}
    In particular,
    \beqs
    \Modsq{L^p_{\eta}\hat{\otimes}_{\pi} L^2}=W(L^2,L^1_{\eta}),\, \forall p\in[1,2],\quad \mbox{and}\quad \Modsq{L^p_{\eta}\hat{\otimes}_{\epsilon} L^2}=W(L^2,L^{\infty}_{\eta,0}),\, \forall p\in[2,\infty).
    \eeqs
\end{itemize}
\end{remark}

\begin{remark}\label{rem-for-inbet-spa}
In view of \eqref{t-m-spa-sdd-ktr}, every TMIB space $E$ lies in between two modulations spaces which are subsets of $E_{\operatorname{loc}}$; namely (cf. \eqref{inc-am-sp-n})
\beq\label{set-in-c}
W(E,L^1)\hookrightarrow E\hookrightarrow W(E,L^{\infty}_0)\subseteq E_{\operatorname{loc}}.
\eeq
Additionally, every DTMIB space $F$ also lies between two modulation spaces which are subsets of $F_{\operatorname{loc}}$. To see this, set $F=E'$ , with $E$ a TMIB space. Then \eqref{set-in-c} together with Lemma \ref{dual-ams-s1} $(iii)$ imply
\beq
W(F,L^1)\subseteq F\subseteq W(F,L^{\infty})\subseteq F_{\operatorname{loc}}
\eeq
and \cite[Theorem 4.8 (iii)]{dppv-3} yields that $W(F,L^1)$ and $W(F,L^{\infty})$ are modulation spaces (associated to DTMIB spaces) as they are the strong duals of the modulation spaces $W(E,L^{\infty}_0)$ and $W(E,L^1)$ respectively.
\end{remark}

\begin{remark}
In view of Remark \ref{rem-for-inbet-spa}, it is an interesting question whether every TMIB space can be represented as a modulation space. More precisely, given a TMIB space $E$ on $\RR^d$, does there exist a (D)TMIB space $X$ on $\RR^{2d}$ such that $E=\Modsq{X}$? If this is the case, then there always exists a TMIB space $X_1$ on $\RR^{2d}$ such that $E=\Modsq{X_1}$: it suffices to take $X_1$ to be the closure of $\SSS(\RR^{2d})$ in $X$. Thus, if every TMIB space is a modulation spaces, \cite[Theorem 4.8 (iii)]{dppv-3} implies that every DTMIB is also a modulation space (of course, associated to a DTMIB space).
\end{remark}

\end{document}